\documentclass[a4paper,11pt]{scrartcl} 

\usepackage[T1]{fontenc}
\usepackage{mathrsfs}
\usepackage{amssymb} 
\usepackage{nicefrac}
\usepackage{mathtools} 
\usepackage{enumitem}
\usepackage{amsthm}
\usepackage{nameref}
\usepackage{comment}
\usepackage{amsmath}
\usepackage{amsfonts}
\usepackage{csquotes}
\usepackage{hyperref,cleveref}
\usepackage{cite}

\hypersetup{
pdfauthor={},
pdftitle={},
breaklinks=true,
colorlinks=true,
linkcolor=blue,
citecolor=blue,
urlcolor=blue,
filecolor=blue,
}

\def\Xint#1{\mathchoice
{\XXint\displaystyle\textstyle{#1}}%
{\XXint\textstyle\scriptstyle{#1}}%
{\XXint\scriptstyle\scriptscriptstyle{#1}}%
{\XXint\scriptscriptstyle\scriptscriptstyle{#1}}%
\!\int}
\def\XXint#1#2#3{{\setbox0=\hbox{$#1{#2#3}{\int}$ }
\vcenter{\hbox{$#2#3$ }}\kern-.6\wd0}}

\def\dashint{\Xint-}

\theoremstyle{plain}
\newtheorem{theorem}{Theorem}[section]
\newtheorem{lemma}[theorem]{Lemma}
\theoremstyle{definition}
\newtheorem{remark}[theorem]{Remark}

\numberwithin{equation}{section}


\newcommand{\bfU}{\mathcal{U}}
\newcommand{\bfZ}{\mathcal{Z}}
\newcommand{\bfY}{\mathcal{Y}}

\newcommand{\bvg}{\bv}
\newcommand{\psig}{\psi}
\newcommand{\mug}{\mu}
\newcommand{\bFg}{\bs{F}}

\newcommand{\dx}{{\,{\mathrm d}x}}
\newcommand{\ds}{{\,{\mathrm d}s}}

\newcommand{\dtheta}{ {\,{\mathrm d}\theta} }

\newcommand{\Wel}{W_{\mathrm{el}}}
\newcommand{\Wpf}{W_{\mathrm{pf}}}

\newcommand{\dxd}{d\times d}

\newcommand{\intO}{\int_{\Omega}}

\newcommand{\TOmega}{{\Omega}_T}

\newcommand{\R}{\mathbb{R}}
\newcommand{\N}{\mathbb{N}}
 
\newcommand{\calF}{\mathscr{F}}

\newcommand{\bbF}{\mathbb{F}}

\newcommand{\D}{{\mathrm D}}

\newcommand{\calV}{\mathcal{V}}

\newcommand{\tdots}{\mathrel{\text{\multiput(0,-2)(0,2){3}{$\cdot$}\,}}}

\newcommand{\dxds}{\,{\mathrm d}(s,x)} 
\newcommand{\dxdt}{\,{\mathrm d}(t,x) }

\newcommand{\bC}{\boldsymbol{C}}

\newcommand{\calE}{\mathscr{E}}

\newcommand{\bv}{\bs{v}}       
\newcommand{\bvD}{\bs{v}_{\mathrm D}}   

\newcommand{\bL}{\bs{L}}  
\newcommand{\bK}{\bs{K}}

\newcommand{\bH}{\bs{H}} 
\newcommand{\bF}{\bs{F}}   
\newcommand{\bA}{\bs{A}} 
\newcommand{\bB}{\bs{B}} 
\newcommand{\bG}{\bs{G}} 
\newcommand{\by}{\bs{y}}  
\newcommand{\bw}{\bs{w}}  

\newcommand{\byg}{\bs{y}}

\newcommand{\DBC}{{\Gamma_{\mathrm D}}}

\newcommand{\tO}{{0}} 

\newcommand{\bs}[1]{\boldsymbol{#1}}   

\newcommand{\I}{\bs{I}_d}    
\newcommand{\id}{\bs{id}}  
 
\newcommand{\Fp}{ \bF_{\mathrm p} } 
 
\newcommand{\Fe}{ \bF_{\mathrm e} }

\DeclareMathOperator{\dv}{div}
\newcommand{\norm}[1]{\lVert #1 \rVert}

\newcommand{\bD}{\boldsymbol{D}}
\DeclareMathOperator{\Div}{div}

\newcommand{\fe}{\calF}
\newcommand{\feel}{\calF_{\mathrm{el}}}
\newcommand{\fepf}{\calF_{\mathrm{pf}}}
\newcommand{\fehy}{\calF_{\mathrm{hy}}}
\newcommand{\Wfe}{W}
\newcommand{\Why}{W_{\mathrm{hy}}}


\begin{document}
\author{
Thomas Eiter\thanks{University of Kassel, Institute of Mathematics, Heinrich-Plett-Str.~40, 34132 Kassel, Germany
} \textsuperscript{,}%
\thanks{Weierstrass Institute for Applied Analysis and Stochastics, Mohrenstr.~39, 10117 Berlin, Germany \\
Email: thomas.eiter@wias-berlin.de\\
\phantom{Email:} leonie.schmeller@wias-berlin.de	
}
\and
Leonie Schmeller\footnotemark[2]
}
\title{
Weak solutions to a model for phase separation coupled with finite-strain viscoelasticity subject to external distortion}

\maketitle

\begin{abstract}
We study 
the coupling of
a viscoelastic deformation governed by
a Kelvin--Voigt model at equilibrium,
based on the concept of second-grade nonsimple materials,
with a plastic deformation due to volumetric swelling, 
described via a phase-field variable 
subject to a Cahn--Hilliard model
expressed in a Lagrangian frame.
Such models can be used to describe the time evolution of hydrogels 
in terms of phase separation within a deformable substrate.
The equations are mainly coupled
via a multiplicative decomposition of the deformation gradient
into both contributions
and via a Korteweg term in the Eulerian frame.
To treat time-dependent Dirichlet conditions for the deformation, 
an auxiliary variable with fixed boundary values is introduced,
which results in another multiplicative structure.
Imposing suitable growth conditions on the elastic and viscous potentials,
we construct weak solutions to this quasistatic model as the limit 
of time-discrete solutions to 
incremental minimization problems.
The limit passage is possible due to additional regularity 
induced by the hyperelastic and viscous stresses.
\end{abstract}

\noindent
\textbf{MSC2020:} 
35A01, 
35A15, 
35K55, 
35Q74, 
74A30, 
74B20
\\
\noindent
\textbf{Keywords:} 
Finite-strain elasticity, nonsimple material, hyperelastic stress, Kelvin--Voigt rheology, viscoelasticity, phase-field model, Cahn--Hilliard equation, multiplicative coupling,
incremental minimization

\section{Introduction}

Mathematical models for coupling multi-phase systems with nonlinear elastic deformation
are fundamental for describing many phenomena in soft matter physics and biology, such as the wetting of soft substrates or the formation of patterns during swelling or deswelling
of hydrogels.
Hydrogels are cross-linked networks of (hydrophilic) polymers dissolved in a liquid,
and they are ubiquitous in nature and have particular applications in medical technology,
for example as scaffolds for cell growth \cite{jockenhoevel2011cardiovascular} and mini-cell encapsulations \cite{fattahi2021core}.
Due to the formation of coexisting regions with different magnitudes of swelling,
mathematical models for hydrogels 
have to
take into account phase-separation processes and nonlinear elastic behavior~\cite{BAM2020,bertrand2016dynamics}. 

In this paper we show existence of weak solutions to 
such a model.
The system is formulated in a Lagrangian configuration,
and the local deformation $\bv$
arises from the combination of plastic deformation due to liquid absorption
with a viscoelastic deformation 
subject to a finite-strain model that allows the consideration of large deformations. 
We consider a two-phase diffuse-interface model, 
where the phase-field variable $\psi$ represents regions with different magnitude of swelling,
are energetically preferred.
The phase-field evolution is subject to a Cahn--Hillard equation,
and the mechanical deformation is described by
a Kelvin--Voigt material model,
thus containing elastic and viscous stresses.
Since the mechanical evolution towards
equilibrium usually happens
at a much faster time-scale than the phase-field evolution,
we consider a quasistatic approximation 
and neglect inertial effects in the mechanical equation.
The elastic potential will be chosen in a way that ensures the absence of
local self-penetration,
and we consider a second-grade nonsimple material,
that is, the stored elastic energy depends on the elastic strain and its gradient.
Such elasticity models were first introduced by Toupin~\cite{toupin1962couplestresses},
and have now become widely accepted in the community~\cite{friedgurtin2006nonsimple,mindlineshel1968firststraingradient,podioguidugli2002nonsimple,triantafyllidisAifantis1986hyperelastic,batra1976nonsimple,mielke2020thermoviscoelasticity,badal2023thermovisc,kruvzik2019mathematical}.
The viscous stress potential will be formulated as a function 
in terms of the right Cauchy--Green tensor in order to ensure frame-indifference,
see also Remark~\ref{rem:frame_indiff} below.

The phase field and the deformation are coupled via a decomposition of the deformation gradient 
into elastic and plastic parts.
Instead of an additive decomposition,
which is suitable for the framework of small strains,
we use a multiplicative decomposition of the deformation gradient.
This concept was first introduced by Kroner~\cite{kroner1959allgemeine} and Lee and Liu~\cite{lee1967finite} in the finite-strain setting
and has become a common approach employed in diverse applications,
also suitable for modeling (isotropic) swelling \cite{abels2022fluid,flory1950statistical,kroner1959allgemeine,lubarda1981correct,lucantonio2013transient,yavari2013nonlinear,mielke2018global,roubicekstefanelli2023swelling,kruvzik2019mathematical,goriely2017biologicalgrowth}.
The plastic deformation depends on the phase field
in such a way that the effect of swelling is purely volumetric and isotropic~\cite[Subsect.~14.2]{goriely2017biologicalgrowth}.

Further, 
we impose time-dependent Dirichlet boundary conditions for the deformation field
on some part of the boundary. 
These boundary conditions can be used to investigate the influence of a variable distortion on pattern formation
during wetting and dewetting of the substrate.
To treat this time dependence,
we follow~\cite{thomas2020analysis,francfort2006existence}
and express the deformation $\bv$ in
terms of a composition with an 
auxiliary variable $\by$ with time-independent boundary values.
As the mechanical equation is expressed in terms of the deformation gradient $\bF=\nabla \bv$, 
this leads to another multiplicative structure in our system. 

The analytical investigation of models
coupling the Cahn--Hilliard equation with elasticity
started few decades ago with the works by
Carrive, Miranville, and Pi\'{e}trus
\cite{carrive2000cahnhilliardelastic},
Garcke~\cite{garcke2000mathematical},
and Miraville~\cite{miranville2000cahnhilliardnonisotropic}.
Since then, the properties of such models were studied 
in various configurations, see~\cite{garcke2005cahnhilliardelasticmisfit,alber2007phaseboundaries,pawlow2009cahnhilliardvisco,heinemann2013existence,heinemann2015existence,garcke2021phasefieldtumour,pawlow2008cahnhilliard}
for example.
While all these articles deal with the small-strain setting
and thus with linear elasticity,
the case of large-strain elasticity, which we are interested in, 
seems to have been treated in only a few articles.
In~\cite{roubicek2021cahn},
related static as well as dynamic models are studied, 
where the latter case 
requires higher-order regularization terms 
and no Dirichlet boundary conditions are considered.
More involved models
considering additional thermodynamic effects were studied in~\cite{roubicek2018magnetoelastic,roubicek2018porousrocks}.
While these works investigate the systems in a Lagrangian framework,
there are also articles using an Eulerian approach, see the recent articles~\cite{agosti2023CHallencahn,agosti2023CHviscoelasticity,roubicekstefanelli2023swelling} and references therein.
For a numerical analysis of large class of biological models
related to the one we study here,
we refer to~\cite{erhardt2023cellinhydrogel,schmellerpeschka2023}.

To show existence of weak solutions to the model considered here,
we first construct approximate solutions
by a monolithic time-incremental scheme. 
We obtain solutions 
to the time-discretized problem 
as solutions to minimization problems
using the direct method of calculus of variations.
Since we consider a hyperelastic stress, which serves as a second-order regularization,
polyconvexity assumptions for the elastic potential are not required.
Moreover, the consideration of a second-grade nonsimple material 
ensures sufficient spatial regularity of the mechanical variable
as well as a uniform lower bound for the determinant of the deformation gradient
using the theory by Healey and Krömer~\cite{healey2009injective} 
and by Mielke and Roub{\'i}{\v{c}}ek \cite{mielke2020thermoviscoelasticity}. 
Since the time-dependent boundary conditions and the coupling to the phase-field lead to 
a non-stationary problem, the mathematical analysis also requires increased time-regularity of the mechanical variable,
which is derived from the viscous stress terms.
This framework allows us to apply a generalized Korn-type inequality,
originally due to Neff~\cite{neff2002korn} and Pompe~\cite{pompe2003korn}, 
which implies the temporal compactness of the mechanical variables, see also \cite{van2023finite,mielke2020thermoviscoelasticity}.
In summary, this means that the hyperelastic and the viscous stress
serve as a regularization, 
which is crucial in order to obtain the weak formulation as the limit
of Euler--Lagrange equations for the discrete minimization problems.
Although these discretized problems will be solved in terms of the 
variables $\by$ and $\psi$, 
the formulation of the (time-discrete) Euler--Lagrange equations
and the weak formulation of the problem will be given in terms of the physical variables $\bv$ and $\psi$, or rather the deformation gradient $\bF=\nabla \bv$ and $\psi$
and their derivatives,
see \eqref{equ:time}, \eqref{equ:chem}, \eqref{equ:elast} below.

\paragraph{Outline}
In Section~\ref{sec:mainresults} we present the mathematical model
studied in this paper
and we state the main result on existence of weak solutions to this system.
In Section~\ref{sec:preliminaries}
we examine
how convergence in the auxiliary variable $\by$ with constant boundary values transfers 
to the actual deformation $\bv$,
and we study regularity properties of the energy functional.
The time-discrete minimization problem 
is introduced in Section~\ref{sec:timediscrete},
and we show existence of minimizers and derive associated a priori bounds
as well as Euler--Lagrange equations.
By passing to the limit in these equations, 
we conclude the proof of existence of weak solutions
in Section~\ref{sec:finalproof}.

\paragraph{Notation}

The symbol $\R_{\geq 0}$ denotes the set of nonnegative real numbers.
We write $|\cdot|$ for the Euclidean norm of a vector in $\R^d$,
a matrix in $\R^{d\times d}$ or a third-order tensor in $\R^{d\times d\times d}$,
where we always consider $d\in\{2,3\}$.
Moreover, ${\mathrm {GL}}_+(d)$ is the subclass of $\R^{d\times d}$
with positive determinant,
and $\mathrm{SO}(d)$ consists of all symmetric matrices with determinant equal to $1$.

Let $X$ be a Banach space with dual space $X^\ast$.
We let $\langle\cdot,\cdot\rangle_{X^\ast\times X}$ be the corresponding dual pairing.
If $X$ is a Hilbert space, we further write $(\cdot,\cdot)_X$
for the associated scalar product.
For $M\subset \R^d$, $d\in\N$ 
an open or closed set,
the space ${\mathrm C}^k(M;X)$, $k\in\N_0$ consists of all $k$-times continuously differentiable maps,
and ${\mathrm C}^{k,\lambda}(M;X)$ denote corresponding H\"older spaces 
with exponent $\lambda\in(0,1)$.
When $X=\R$, we simply write ${\mathrm C}^k(M)$, etc.

By $\Omega$ we always denote a bounded domain in $\R^d$
with Lipschitz boundary $\partial\Omega$
and unit outer normal vector $\nu$.
We write $\partial_t$ and $\partial_j=\partial_{x_j}$, $j=1,\ldots,d$ for
partial derivatives in time and space, respectively.
Moreover, the divergence operator $\dv$ acts on tensors of second or third order
with respect to their last index,
that is,
for $\mathbf A(x)=(A_{ij}(x))\in\R^{d\times d}$ and $\mathbf G(x)=(G_{ijk})\in\R^{d\times d\times d}$,
we have
$(\dv\mathbf A)_i = \partial_j A_{ij}$
and 
$(\dv\mathbf G)_{ij} = \partial_k G_{ijk}$,
where we use Einstein summation convention.

For $p\in[1,\infty]$ and $k\in\N$,
we use the notation $L^p(\Omega)$, $W^{k,p}(\Omega)$, 
and $H^k(\Omega):=W^{k,2}(\Omega)$
for Lebesgue spaces and Sobolev spaces of scalar functions,
and we write $L^p(\Omega;\R^d)$ etc.~for their vector-valued analogs.
Similarly, for a Banach space $X$ and $T>0$, we write 
$L^p(0,T;X)$ and $H^1(0,T;X)$ for Bochner--Lebesgue spaces and Bochner--Sobolev spaces.
Moreover, if $\Gamma_D\subset \partial\Omega$ has positive surface measure $\mathcal{H}^{d-1}(\DBC)>0$,
we set
\[
W^{k,p}_{\DBC}(\Omega)=
\{ u \in W^{k,p}(\Omega) \mid u=0 \text{ on }\DBC\}.
\]

\section{Main results}
\label{sec:mainresults}

We first introduce the specific model we investigate 
in this article,
and we collect the general assumptions on the energy and dissipation potentials, the measure of swelling and the time-dependent Dirichlet data.
After introducing the functional framework,
we state the result on existence of weak solutions.

\subsection{The model}

Consider a bounded domain $\Omega\subset\R^d$, $d=2,3$, describing 
the viscoelastic body in a reference configuration,
and a time horizon $T>0$.
The deformation of the body is described by the field $\bv\colon(0,T)\times\Omega\to\R^d$,
and the field $\psi\colon(0,T)\times\Omega\to\R$
describes the different phases of liquid absorption that result in swelling or shrinking.
The deformation field $\bv$ and the phase field $\psi$
are coupled via a multiplicative decomposition of the 
deformation gradient $\bF=\nabla\bv$ into 
an elastic part $\Fe$ and a plastic part $\Fp$.
More precisely, we let the plastic and the elastic deformation gradients be defined by
\begin{equation}\label{def-Fp_Ogden}
\Fp = \Fp(\psi) = g(\psi)\I
\qquad\text{and}\qquad
\Fe = \Fe(\bv,\psi)= \bF\Fp^{-1}=\frac{1}{g(\psi)}\nabla\bv\,,
\end{equation}
where $g$ is a function that acts as a measure of swelling,
and $\I$ is the $(d\times d)$-identity tensor.
In particular, swelling is modeled as a purely volumetric and isotropic effect.
Based on this decomposition,
the free energy density is given by
\begin{equation}
\label{eq:Wgesamt}
\Wfe(\bF,\nabla\bF,\psi,\nabla\psi)=\Wel(\Fe)+\Why(\nabla\bF)+\Wpf(\psi,\nabla\psi,\bF),
\end{equation}
where $\Fe$ and $\bF$ are related by~\eqref{def-Fp_Ogden}.
The first two terms represent elastic contributions,
where $\Wel$ corresponds to a nonlinear elastic energy
depending on the elastic deformation gradient $\Fe$,
and $\Why$ depends on $\nabla\bF$,
which leads to a hyperelastic stress in the evolution equations.
The last term $\Wpf$ is a Ginzburg--Landau free energy, 
which combines a double-well potential for $\psi$ 
with a term quadratic in $\bF^{-T}\nabla \psi$
that corresponds to a Korteweg stress in the deformed configuration.
For the detailed properties of these potentials, we refer to 
Subsection~\ref{subsec:assumptions}.

The evolution of the phase-field variable $\psi$ is governed by the system
\begin{subequations}\label{equ:pde}
\begin{align}
    \partial_t\psi &= \Delta\mu\,
    &&\text{in }[0,T]\times\Omega\,,
    \label{equ:CahnHilliard}
    \\
    \mu&=\partial_{\psi}\Wfe(\bF,\nabla\bF,\psi,\nabla\psi)-\operatorname{div}\big[\partial_{\nabla\psi}\Wfe(\bF,\nabla\bF,\psi,\nabla\psi)\big]
    &&\text{in }[0,T]\times\Omega\,,
    \label{equ:chempot}
    \\
    \nabla\mu\cdot\nu  & =\nabla\psi\cdot\nu    = 0       &&\text{on }[0,T] \times\partial\Omega\,.
    \label{equ:bdry.normal}
\end{align}
This constitutes
a Cahn--Hilliard equation,
which can be regarded as an $H^{-1}$-gradient flow \cite{garcke2000mathematical,heinemann2015existence}.
More precisely,~\eqref{equ:CahnHilliard} is a diffusion equation 
for the phase-field variable $\psi$
with constant mobility equal to $1$ and chemical potential $\mu$,
which is subject to the constitutive equation~\eqref{equ:chempot},
given as the variation of the free energy $W$ with respect to $\psi$.
Moreover,~\eqref{equ:bdry.normal} describes Neumann boundary conditions,
which ensures mass conservation.
The evolution of the deformation field $\bv$ is determined by the equations
\begin{align}
    \operatorname{div}\left[\Sigma_{\mathrm {el}} + \Sigma_{\mathrm {vi}} -\Div\Sigma_{\mathrm {hy}}\right] &= 0
    &&\text{in }[0,T]\times\Omega\,,
    \label{equ:forcebalance}
    \\
    \bv                  & =\bvD  &&\text{on }[0,T]\times\DBC\,,
    \label{equ:bdry.Dirichlet}\\ 
    [\Sigma_{\mathrm {el}}+\Sigma_{\mathrm {vi}}-\dv_\mathrm{s}\Sigma_{\mathrm {hy}} ]\nu     & = 0       &&\text{on } [0,T]\times\partial\Omega\backslash \DBC\,,
    \label{equ:bdry.Neumann}
\\
    \Sigma_{\mathrm {hy}}:(\nu\otimes\nu)     & = 0       &&\text{on } [0,T]\times\partial\Omega\,.
    \label{equ:bdry.hyperstress}
\end{align}
Here,~\eqref{equ:forcebalance} describes a
Kelvin--Voigt-type viscoelastic material at mechanical equilibrium.
In particular, we consider a quasistatical approximation 
that neglects inertia terms.
The total stress is 
composed of the first-grade elastic stress $\Sigma_{\mathrm {el}}$, the viscous stress $\Sigma_{\mathrm{vi}}$, and the elastic hyperstress $\Sigma_{\mathrm{hy}}$
given by
\begin{equation}\label{eq:stresses}
\Sigma_{\mathrm {el}}=\partial_{\bF}\Wel(\Fe)+\partial_{\bF}\Wpf(\psi,\nabla\psi,\bF),
\quad
\Sigma_{\mathrm {vi}}=\partial_{\bs{\dot{F}}}V(\bF,\dot{\bF},\psi),
\quad
\Sigma_{\mathrm {hy}}=\partial_{\nabla\bF}\Why(\nabla\bF),
\end{equation}
where $V$ is a viscous stress potential
that depends on the defomration gradient $\bF$, its time derivative $\dot{\bF}$ and the phase-field variable $\psi$.
By~\eqref{equ:bdry.Dirichlet} we prescribe the deformation at the 
Dirichlet part $\DBC\subset\partial\Omega$ of the boundary 
via a (time-dependent) function $\bvD$, 
while by~\eqref{equ:bdry.Neumann} we have natural boundary conditions 
for the total stress on the remaining part $\partial\Omega\setminus\DBC$.
Here, $\dv_\mathrm{s}$ is the surface divergence, defined as the trace
of the surface gradient $\nabla_\mathrm{s}u=(\I-\nu\otimes\nu)\nabla u$.
Equation \eqref{equ:bdry.hyperstress} is a second-order Neumann-type condition 
for the hyperstress.
For a justification of these boundary conditions,
we refer to \cite{mielke2020thermoviscoelasticity}
or~\cite[Subsect.~2.5.1]{kruvzik2019mathematical}. 
The system is completed with initial conditions
\begin{equation}
    (\psi,\bv)(0)=(\psi^0,\bv^0) \qquad \text{in }\Omega
    \label{equ:init}
\end{equation}
\end{subequations}
for the phase field and the deformation.

\subsection{Assumptions}
\label{subsec:assumptions}

For the whole article, we consider a bounded domain $\Omega\subset\R^d$, $d\in\{2,3\}$,
with Lipschitz boundary.
Let the Dirichlet part $\DBC\subset\partial\Omega$ of the boundary
have positive surface measure $\mathcal{H}^{d-1}(\DBC)>0$.
We further assume the following:
\begin{enumerate}[label=(A\arabic*)] 
    \item\label{Ass0}
    Let $p,q,\beta\in(1,\infty)$ satisfy
    \[
    p\geq 2\beta,\quad \beta >d \quad\text{and}\quad q\geq\frac{\beta d}{(\beta-d)}\,.
    \]
    \item\label{Ass1} The elastic energy density $\Wel:{\mathrm {GL}}_+(d)\to\R_{\geq 0}$ is twice continuously differentiable 
    and satisfies:
    \begin{enumerate}[label=(\roman*)] 
        \item \textit{Growth condition}: There are $\alpha, c>0$ such that for all $\bF\in{\mathrm {GL}}_+(d)$ it holds
    \begin{equation}\label{equ:prop-R_1}
        \Wel(\bA)\geq \alpha|\bA|^p+c\det(\bA)^{-q}\,.
    \end{equation}
    \item 
    \textit{Control of the stress}: 
    There is $C>0$ such that for $\bF
    \in{\mathrm {GL}}_+(d)$  
    it holds
    \begin{align}\label{equ:stress_contral_KH2}
        |\partial_{\bA}\Wel(\bA)\bA^T|\leq C\left(1+\Wel(\bA)\right)\,.
    \end{align}
    \item 
    \textit{Uniform continuity of stress}: For $\overline{\varepsilon}>0$ there exist $\delta>0$ such that for $\bA,\bB\in{\mathrm {GL}}_+(d)$ and $|\bB-\I|\leq \delta$ it holds
    \begin{align}\label{equ:uniform_continuity_stresses}
    \begin{split}
        |\partial_{\bA}\Wel(\bB\bA)(\bB\bA)^T-\partial_{\bA}\Wel(\bA)\bA^T|\leq \overline{\varepsilon} (1+\Wel(\bA))\,.
    \end{split}
    \end{align}
    \end{enumerate}

    \item\label{Ass2} The elastic hyperstress potential $\Why:\R^{\dxd\times d}\to\R_{\geq 0}$ is a strictly convex
    and continuously differentiable function
    that satisfies: 
    \begin{enumerate}[label=(\roman*)]
    \item \textit{Growth conditions}: There are constants
    $\gamma,C>0$ such that for all $\bG\in\R^{\dxd\times d}$ it holds
    \begin{align}
            \gamma|\bs{G}|^\beta-C\leq \Why(\bs{G})&\leq C(1+|\bs{G}|^\beta)\,,
            \label{equ:prop-R_2}\\
             |\partial_{\bG}\Why(\bG)|&\leq C(1+|\bG|^{\beta-1}) \,. 
            \label{equ:prop-R_22}
    \end{align} 
    \item \textit{Uniform continuity of the hyperstress}: 
        For $\overline{\varepsilon}>0$, there are $\delta>0$ and $C>0$ such that for $\bG, \bH \in\R^{\dxd\times d}$ with $|\bG-\bH|\leq\delta$ it holds
    \begin{align}\label{stress_c_reg}
        |\partial_{\bG}\Why(\bG)-\partial_{\bG}\Why(\bH)|\leq\overline{\varepsilon}C(1+|\bG|^{\beta-1})\,.
    \end{align}
    \end{enumerate}

    \item\label{Ass4} There are constants $a,b>0$ such that the phase-field energy density $\Wpf:\R\times\R^d\times\R^{d\times d}\to\R$ is given by
    \begin{align}\label{def-W_double}
        \Wpf(\psi,\nabla\psi,\bF) =  \frac{a}{4}(\psi^2-1)^2+ \frac{b}{2}|\bF^{-T}\nabla \psi|^2\,.
    \end{align}

    \item\label{Ass5} 
    The 
    viscous stress potential $V:\R^{\dxd}\times\R^{\dxd}\times\R\to\R_{\geq 0}$ 
    is given via a reduced potential $\hat{V}:\R^{\dxd}\times\R^{\dxd}\times\R\to\R_{\geq 0}$ such that 
    $V(\bF,\dot{\bF},\psi)= \hat{V}(\bF^T\bF,\dot{\bF}^T\bF + \bF^T\dot{\bF},\psi)$,
    where $\hat V$ is quadratic in the second argument with
    \begin{equation}\label{eq:Vhat.quadratic}
        \forall\psi\in\R, \ \forall \bC,\dot{\bC}\in\R^{d\times d}: \quad
        \hat{V}(\bC,\dot{\bC},\psi) = \frac{1}{2}\dot{\bC}:\bD(\bC,\psi)\dot{\bC}
    \end{equation}
    for a continuous forth-order tensor $\bD:\R^{d\times d}\times\R\to\R^{d\times d \times d \times d}$,
    and such hat there are constants $c,C>0$ with 
    \begin{align}\label{equ:fuer_korn}
        \forall\psi\in\R, \ \forall \bC,\dot{\bC}\in\R^{d\times d}: \quad
        c|\dot{\bC}|^2\leq \hat{V}(\bC,\dot{\bC},\psi)\leq C |\dot{\bC}|^2\,.
    \end{align}

    \item \label{Ass6} 
    The measure of swelling $g$ is a nondecreasing function $g\in C^1(\R)$ such that 
    \begin{align}\label{equ:alles_g-OO}
    \exists\, \underline{g},\overline{g}\in\R: \ \forall z\in\R: \quad 0<\underline{g}\leq g(z)\leq\overline{g}\,. 
    \end{align}
    
\item \label{Ass7}
The time-dependent boundary data 
$\bvD$
are given as a function
$\bvD:[0,T]\times\R^d\to\R^d$ with the regularity
\begin{equation}
\label{equ:BDC_1}
\bvD \in \mathrm C^1([0,T];\mathrm C^2(\R^d;\R^d))
\end{equation}
such that there is $C>0$ with
\begin{equation}\label{equ:BDC_2}
|\nabla_{\by}\bvD(t,\by)|+|\nabla_{\by\by}^2\bvD(t,\by)|+|\partial_t\nabla_{\by}\bvD(t,\by)|+ |\partial_t\nabla^2_{\by\by}\bvD(t,\by)|\leq C
\end{equation}
for all $\by\in\R^d$ and $t\in[0,T]$,
and with pointwise invertible spatial derivative $\nabla_{\by}\bvD(t,\by)$
such that 
\begin{equation}\label{equ:BDC_3}
    \forall (t,\by)\in [0,T]\times\R^d: \ |\nabla_{\by}\bvD(t,\by)^{-1}|\leq C.
\end{equation}
\end{enumerate}

\begin{remark}
One example that satisfies the conditions in~\ref{Ass1} and~\ref{Ass2} is given by the Ogden-type elastic energy potential $\Wel$ and the hyperstress potential $\Why$ defined as 
\begin{gather}\label{def-Wel-0}
    \Wel(\bF)  
    = \frac{\alpha}{p}|\bF|^p + \frac{c}{q}(\det(\bF))^{-q}\quad\text{and}\quad
    \Why(\bG) = \frac{\gamma}{\beta}|\bG|^{\beta} 
\end{gather}
with $\alpha,c>0$ and exponents $p,q,\beta\in (1,\infty)$ as in assumption~\ref{Ass0}. 
Additionally, this choice satisfies static frame-indifference,
see Remark~\ref{rem:frame_indiff} for more information.
A natural candidate for the swelling function $g$ could be given by
a non-decreasing function
with
\[  
    g(t)=  
    \begin{cases}
        \overline{g}\quad &\text{if }t\geq 1+\delta\,,\\
        1+a t\qquad &\text{if }t\in[-1,1]\,,\\
        \underline{g}\quad &\text{if }t\leq-1-\delta\,
    \end{cases}
\]
for some $a,\delta>0$ and $0<\underline{g}<1<\overline{g}$. 
Then
the volumetric swelling effect is linear in $\psi$
for $\psi\in[-1,1]$,
it vanishes for $\psi=0$,
and the energetically preferred states $\psi=\pm 1$ correspond to 
saturated and dry phases.
\end{remark}

\begin{remark}
By the lower bound~\eqref{equ:prop-R_1},
the energy blows up
if the determinant of the deformation gradient approaches $0$.
This ensures that local self-penetration is avoided.
Since in our analysis the deformation field $\bv$
is continuously differentiable, this further implies its local invertibility.
However,~\eqref{equ:prop-R_1} does not prevent from 
global self-penetration of the material.
\end{remark}

\begin{remark}\label{rem:dAdFWel}
In~\eqref{equ:stress_contral_KH2} and~\eqref{equ:uniform_continuity_stresses}
we formulate assumptions on the classical derivative $\partial_{\bA}\Wel$
of the elastic potential,
which can also be regarded as the derivative with respect to the elastic part of the deformation gradient,
which will occur as the argument of $\Wel$.
However, in our model the elastic stress $\Sigma_{\mathrm{el}}$ is determined by $\partial_{\bF}\Wel$,
that is, the derivative with respect to the full deformation gradient.
In virtue of~\eqref{def-Fp_Ogden}, these terms are related by
\begin{equation}
\label{eq:Wder.F}
\partial_{\bF}\Wel(\Fe)=\partial_{\bA}\Wel (\Fe) \Fp^{-T} = \frac{1}{g(\psi)}\partial_{\bA}\Wel (\Fe)
\end{equation}
Similarly, we observe
\[
    \partial_{\psi}\Wel(\Fe)
    = \partial_{\bA}\Wel(\Fe)\bF^T\partial_{\psi}\left(\tfrac{1}{g(\psi)}\right)
    = -\partial_{\bA}\Wel(\Fe)\bF^T\tfrac{g'(\psi)}{g(\psi)^2}\,.
\]
\end{remark}

\begin{remark}
The upper bounds and continuity properties of the 
derivatives of the elastic stress and hyperstress potentials
in~\eqref{equ:stress_contral_KH2},~\eqref{equ:uniform_continuity_stresses},
\eqref{equ:prop-R_22} and~\eqref{stress_c_reg}
will be used to identify minimizers of the time-discretized problem
as solutions of associated Euler--Lagrange equations.
In the time-continuous limit, this leads to a weak formulation
of the mechanical force balance~\eqref{equ:forcebalance}
in a Lagrangian frame, see~\eqref{equ:elast} below.
Observe that the condition~\eqref{equ:stress_contral_KH2}
was used by Ball~\cite{ball2002some}
for a similar purpose
in the framework without a second-grade elasticity term.
However, the admissible test functions used in the weak formulation in~\cite{ball2002some} 
depend on the weak solution of the problem.
Instead, the formulation in a Lagrangian frame
and with test functions independent of the solution,
which we use here,
seems favorable
from the perspective of numerical implementation.
\end{remark}

\begin{remark}
The phase-field energy density $\Wpf$ in~\eqref{def-W_double} 
is composed of a double-well potential and a gradient term
associated to a Korteweg stress.
The first term favors pure phases $\psi=\pm 1$, 
while the second one penalizes transitions between these phases
thus modeling capillarity effects. 
In order to take into account the phase-field gradient 
in the deformed configuration, 
we consider here $\bF^{-T}\nabla\psi$ 
instead of $\nabla\psi$.
Choosing $a=o(\varepsilon^{-1})$ and $b=o(\varepsilon)$,
the limit $\varepsilon\to 0$ formally leads to a sharp-interface model
with surface tension.
\end{remark}

\begin{remark}\label{remark:viscous.stress}
Instead of~\ref{Ass5},
we could choose 
the viscous potential $V$ quadratic in $\dot\bF$
in order to simplify the analysis.
However, for a physically reasonable model,
$V$ has to satisfy 
\textit{dynamic frame-indifference},
which means that for all $\bF,\,\dot{\bF}\in\R^{d\times d}$ and all smooth functions $t\mapsto\bs{R}(t)\in\mathrm{SO}(d)$ it holds
    \[
        V(\bs{R}\bF,\partial_t\bs{R}\bF+\bs{R}\dot{\bF},\psi)
        = V(\bF,\dot{\bF},\psi),
    \]
see also~\cite[Sect.~9]{kruvzik2019mathematical}.
This is equivalent to the existence of a reduced potential $\hat V$ in terms of the right Cauchy--Green tensor 
$\bC=\bF^T\bF$ and its time derivative $\dot \bC= \dot \bF^T\bF+\bF^T\dot\bF$
as we assume in~\ref{Ass5}.
In this regard, the quadratic structure of $\hat V$ in~\ref{eq:Vhat.quadratic},
which leads to a viscous stress $\Sigma_{\mathrm{ vi}}$ 
linear in $\dot\bC$,
is the simplest choice that is physically reasonable.
Observe that we allow the forth-order tensor $\bD$ to depend on $\bC$, $\psi$
in an arbitrary (smooth) nonlinear fashion,
but even if $\bD$ is constant, 
the viscous potential $V$ explicitly depends on $\bF$.
\end{remark}

\begin{remark}
\label{rem:frame_indiff}
To obtain a physically reasonable model, 
the elastic stress potential $\Wel$
    and the hyperstress potential $\Why$
    has to satisfy \emph{static frame-indifference}: 
    For all rotation matrices $\bs{R}\in\mathrm{SO}(d)$ it holds
    \begin{align*}
        \Wel(\bs{R}\bF,\psi) = \Wel(\bF,\psi)
        \quad\text{and}\quad
        \Why(\bs{R}\bG) = \Why(\bG) \,
    \end{align*}
    for all $\bF\in\R^{d\times d}$ and $\bG\in\R^{d\times d\times d}$,
    where $(\bs{R}\bG)_{jk\ell}:=\sum_{m=1}^d \bs{R}_{jm}\bG_{mk\ell}$.
    Note that one can also formulate the static frame-indifference of the elastic potentials
    and the dynamic frame-indifference of the viscous potential 
    (see Remark~\ref{remark:viscous.stress})
    in terms of the associated stresses $\Sigma_{\mathrm{el}}$, $\Sigma_{\mathrm{hy}}$ and $\Sigma_{\mathrm{ vi}}$, which are given in~\eqref{eq:stresses}, in the following way:
    For all rotation matrices $\bs{R}\in\mathrm{SO}(d)$ it holds
    \[
        \Sigma_{\mathrm{el}}(\bs{R}\bF,\psi) = \bs{R}\Sigma_{\mathrm{el}}(\bF,\psi)
        \quad\text{and}\quad
        \Sigma_{\mathrm{hy}}(\bs{R}\bG) = \bs{R}\Sigma_{\mathrm{hy}}(\bG) \,,
    \]
    and for all smooth functions 
    $t\mapsto\bs{R}(t)\in\mathrm{SO}(d)$ it holds
    \[
        \Sigma_{\mathrm {vi}}(\bs{R}\bF,\partial_t\bs{R}\bF+\bs{R}\dot{\bF},\psi)
        = \bs{R}\Sigma_{\mathrm {vi}}(\bF,\dot{\bF},\psi)
    \]
    for all $\bF, \dot{\bF}\in\R^{d\times d}$ and $\bG\in\R^{d\times d\times d}$.
\end{remark}

\subsection{Functional setup}

In view of the time-dependent Dirichlet conditions~\eqref{equ:bdry.Dirichlet}
and the growth condition~\eqref{equ:prop-R_1}
for the elastic energy,
the set of admissible deformations at time $t\in[0,T]$ is given by 
\begin{equation}\label{def-Ut}
    \bfU(t) =\{\bw\in W^{2,\beta}({\Omega};\R^d)\,\vert\,\bw=\bvD(t) \text{ on } \DBC, \, \det(\nabla \bw)>0 \text{ in }\Omega\}\,,
\end{equation}
which is a closed subspace of $W^{2,\beta}(\Omega;\R^d)$.
To avoid this time-dependent function space, we introduce
\begin{equation}\label{def-Y}
    \bfY = \{\by\in W^{2,\beta}({\Omega}; \R^d)\,\vert\,\by = \id\text{ on }\DBC,\, \det(\nabla \by)>0 \text{ in }\Omega\}\,,
\end{equation}
which is again a closed subspace of $W^{2,\beta}(\Omega;\R^d)$.
Under the assumption~\ref{Ass7}, $\bvD$ induces a bijective mapping between $\bfY$ and $\bfU(t)$ by
\begin{equation}\label{equ:DBC_with_y}
    \by\mapsto\bv(t,\cdot):=\bvD(t,\by(\cdot)),
\end{equation}
see Lemma~\ref{lemma:estvy} below.
In particular, $\bv(t,x) = \bvD(t,x)$ for $x\in\DBC$ for all $t\in[0,T]$,
and $\nabla\bv(t,\cdot)$ has positive determinant if and only if $\nabla\by$ does.

The set of admissible phase fields $\psi$ is given by
\[
    \bfZ = H^1({\Omega})\,. 
\]
Observe that from~\eqref{equ:CahnHilliard} and~\eqref{equ:bdry.normal}, we can conclude conservation of mass, precisely,
\begin{align*}
    \forall t\in[0,T]: \ \intO \left(\psi(t) - \psi^0\right)\dx  = 0 \,.
\end{align*}
In particular, $\partial_t\psi$ has vanishing mean value,
and we can formulate the Cahn--Hilliard equation~\eqref{equ:CahnHilliard} as 
an $H^{-1}$-gradient flow. To this end, we introduce the mean-free function space $V_0$ and its dual space by 
\begin{subequations}\label{def-V0_H-1GF}
\begin{equation}\label{equ:_V0_Neo}
    V_0  = \big \{\phi\in H^1({\Omega})\mid\intO \phi\dx = 0\big\}\quad\text{and}\quad
    \Tilde{V}_0  = \big\{\phi\in (H^1({\Omega}))^{*}\mid \langle\phi,{1}\rangle_{(H^1)^{*}\times H^1} = 0\big\}\,.
\end{equation}
This allows us to define the inverse operator $(-\Delta)^{-1}:\Tilde{V}_0 \to V_0$ of 
\begin{equation}\label{equ:Inverse_laplace}
    -\Delta:V_0\to\Tilde{V}_0\,,\quad\phi\mapsto(\nabla\phi,\nabla\cdot)_{L^2(\Omega)}. 
\end{equation}
The space $\Tilde{V}_0$ is endowed with the scalar product 
\begin{equation}\label{equ:NEO_skp}
    \langle \phi_1,\phi_2\rangle_{\Tilde{V}_0}=(\nabla(-\Delta)^{-1}\phi_1,\nabla(-\Delta)^{-1}\phi_2)_{L^2(\Omega)}\quad\text{for }\phi_1,\,\phi_2\in \Tilde{V}_0\,
\end{equation}
\end{subequations}
and the corresponding norm.
Note that the definition in \eqref{equ:NEO_skp} is suitable for constant Cahn--Hilliard mobility as assumed for our system, but it has to be adapted for other mobilities.
Moreover, each function $f\in V_0$ can be interpreted as an element of $\Tilde{V}_0$ via the mapping
\begin{equation*}
    \xi \mapsto \intO f\cdot\xi\dx  \quad\text{for }\xi\in V_0\,, 
\end{equation*}
such that $\langle\cdot,\cdot\rangle_{\Tilde{V}_0}$ is also defined for elements in $V_0$.

The free energy density given by~\eqref{eq:Wgesamt}
gives rise to the total free energy
\begin{equation}\label{def-Fg}
\begin{aligned}
    &\fe:[0,T]\times\bfY\times\bfZ\to\R\cup\{\infty\},
    \\
    &\fe (t, \by, \psi) 
    =  \intO\big[\Wel(\bF(t,\by)\Fp^{-1}(\psi)) +\Wpf(\psi,\nabla\psi,\bF(t,\by)) + \Why(\nabla\bF(t,\by))\big]\dx\,.
\end{aligned}
\end{equation}
Further, the viscous dissipation $\calV$ is defined via 
the viscous dissipation potential $V$
and takes the form
\begin{equation}\label{eq:viscousdissipation}
    \begin{aligned}
    &\calV\colon[0,T]\times\bfY\times H^1(\Omega;\R^d)\times\bfZ \to \R,
    \\
    &\calV(t,\by_1,\by_2,\psi) = \intO V(\bF(t,\by_1),\bF(t,\by_2),\psi)\dx\,.
\end{aligned}
\end{equation}
Usually, we will consider the case $\by_2=\partial_t\by_1$, 
but other choices will be necessary in the time-discrete setting.

\subsection{Existence of weak solutions}

We can now define the notion of weak solutions to~\eqref{equ:pde}.
For given initial data $(\by^0,\psi^0)\in\bfY\times\bfZ$,
a pair $(\by,\psi):[0,T]\to\bfY\times\bfZ$ 
is called a \textit{weak solution} 
to \eqref{equ:pde} if
\begin{enumerate}[label=(\roman*)]
    \item 
    $\byg\in L^{\infty}(0,T;W^{2,\beta}(\Omega;\R^{d}))\cap H^1(0,T;H^1(\Omega;\R^d))$,
    \item 
    $\psig\in L^{\infty}(0,T;H^1(\Omega))\cap H^1(0,T;H^1(\Omega)^*)$, 
    \item
    $(\byg(0),\psig(0))=(\by^0,\psi^0)$,
    \item  there exists $\mu\in L^2(0,T;H^1(\Omega))$ such that for all $ {\zeta\in L^2(0,T;H^1({\Omega}))}$ it holds
    \begin{align}
        &\int_0^T\langle\partial_t\psig,\zeta \rangle_{(H^1)^*\times H^1} \ds
         =  -\int_{\TOmega}   \nabla \mu \cdot \nabla\zeta \dxds\,,\label{equ:time}
         \\
         &\int_{\TOmega}  \mug \,\zeta \dxds 
         = \int_{\TOmega}\partial_{\psi} W(\bFg,\nabla\bFg,\psig,\nabla\psig) \zeta 
         +\partial_{\nabla\psi} W(\bFg,\nabla\bFg,\psig,\nabla\psig)\cdot\nabla\zeta \dxds\label{equ:chem}\,,
    \end{align}
    \item 
    for all $\bw\in L^2(0,T;W_{\DBC}^{2,\beta}({\Omega};\R^d))$ it holds
    \begin{equation}\label{equ:elast}
        \begin{aligned}
         0 = \int_{\TOmega}\big[
         &\partial_{\bA}\Wel\left(\bF_{e}\right):\nabla\bw\Fp^{-1}
        +\partial_{\bF} \Wpf(\psi,\nabla\psi,\bF)
        \\
        &\qquad
        +\partial_{\bs{\dot{F}}}V(t,\bFg,\partial_t\bFg):\nabla\bw 
        +\partial_{\bG}\Why(\nabla\bFg)\tdots\nabla^2\bw\big]\dxds\,,
        \end{aligned}
    \end{equation}
\end{enumerate}
where $\bF(t,x):=\nabla\bvD(t,\by(t,x))$, $\Fp(t,x):=g(\psig(t,x))\I$
and $\Fe:=\bF\Fp^{-1}$.

Here the initial data $\by^0$ and $\bv^0$ are related by $\bv^0=\bvD(0,\by^0)$.
In this sense, a weak solution $(\by,\psi)$ satisfies the initial conditions~\eqref{equ:init}.

The weak formulation~\eqref{equ:time},~\eqref{equ:chem} and~\eqref{equ:elast}
is in a compact form.
Due to~\eqref{def-Fp_Ogden} and~\eqref{def-W_double}, 
more detailed expressions for the derivatives in~\eqref{equ:chem}
are given by
\begin{align}
\partial_\psi W(\bF,\nabla\bF,\psi,\nabla\psi)
&=-\partial_{\bA} \Wel \left(\frac{1}{g(\psi)}\bF\right):\bF \frac{g'(\psi)}{g(\psi)^2}
+a(\psi^3-\psi),
\label{eq:Wder.psi}
\\
\partial_{\nabla\psi} W(\bF,\nabla\bF,\psi,\nabla\psi)
&=\partial_{\nabla\psi}\Wpf(\psi,\nabla\psi,\bF)=b\bF^{-1}\bF^{-T}\nabla\psi.
\label{eq:Wder.dpsi}
\end{align}
In~\eqref{equ:elast} we replaced the derivative of $\Wel$ 
with respect to $\bF=\nabla\bv$,
which appears in the elastic stress $\Sigma_{\mathrm{el}}=\partial_{\bF}\Wfe$,
with a term involving the classical derivative of $\Wel$,
see also Remark~\ref{rem:dAdFWel}.
The corresponding derivative of $\Wpf$ is given by
\begin{equation}
\partial_{\bF}\Wpf (\psi,\nabla\psi,\bF) :\bG= -b (\bF^{-1}\bF^{-T}\nabla\psi) : (\bG^T\bF^{-T}\nabla\psi),
\qquad\bG\in\R^{d\times d}.
\label{eq:Wpfder.F}
\end{equation}

Clearly, \eqref{equ:time} and~\eqref{equ:chem}
are weak formulations of~\eqref{equ:CahnHilliard}
and~\eqref{equ:chempot}
in virtue of the boundary conditions in~\eqref{equ:bdry.normal}.
The corresponding statement can also be shown for~\eqref{equ:elast},
see~\cite{mielke2020thermoviscoelasticity} for details.

As the main result of this article,
we show existence of
weak solutions to~\eqref{equ:pde}
globally in time
that satisfy an energy-dissipation estimate.
Here we require a smallness condition that involves 
the boundary velocity $\bvD$ and the 
constants $\alpha$ and $\gamma$
occurring in the growth conditions for the elastic potential
and the hyperstress potential from~\ref{Ass1} and \ref{Ass2}.

\begin{theorem}[Existence of weak solutions]\label{Thm:main_result_fast}
Let $\Omega\subset\R^d$, $d\in\{2,3\}$, be a bounded Lipschitz domain,
let $\mathcal{H}^{d-1}(\DBC)>0$,
and let
the assumptions \ref{Ass0}--\ref{Ass7} hold true.
Then there is 
$\delta_0>0$ such that if
\begin{equation}\label{eq:smallness}
\alpha^{-1}\gamma\norm{\nabla^2\bvD}_{L^\infty((0,T)\times\Omega)}^\beta< \delta_0,
\end{equation}
then there exists a weak solution $(\by,\psi)$ to system \eqref{equ:pde} such that
the energy-dissipation estimate
    \begin{equation}\label{equ:energy_O}
    \begin{aligned}
        \fe (t,\byg(t),\psig(t)) + \int_{0}^{t}\int_{\Omega}
        \frac{1}{2}|\nabla\mu|^2 &\dx \ds
        +\int_{0}^{t} \calV (s,\byg(s),\partial_t\byg(s),\psi(s)) \ds 
        \\
        & \leq \fe (0, \by^0,\psi^0) + 
        \int_{0}^{t}\partial_t\fe (s,\by(s),\psi(s))\ds\,
    \end{aligned}
    \end{equation}
holds for a.a.~$t\in[0,T]$.
\end{theorem}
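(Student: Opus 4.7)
The plan is to construct weak solutions via a monolithic time-incremental minimization scheme, as announced in the introduction. Fix $N\in\N$, set $\tau=T/N$ and $t_k^N=k\tau$, and starting from $(\by_0^N,\psi_0^N)=(\by^0,\psi^0)$ define iteratively $(\by_k^N,\psi_k^N)\in\bfY\times\bfZ$ as minimizers of
\[
(\by,\psi)\mapsto \fe(t_k^N,\by,\psi)+\tau\calV\bigl(t_k^N,\by_{k-1}^N,\tfrac{\by-\by_{k-1}^N}{\tau},\psi_{k-1}^N\bigr)+\tfrac{1}{2\tau}\bigl\|\psi-\psi_{k-1}^N\bigr\|_{\Tilde{V}_0}^2\,,
\]
where the last term encodes the $H^{-1}$-gradient-flow structure of the Cahn--Hilliard equation. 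Existence of minimizers follows from the direct method: the growth conditions \eqref{equ:prop-R_1} and \eqref{equ:prop-R_2} give coercivity in $W^{2,\beta}(\Omega;\R^d)\times H^1(\Omega)$, weak lower semicontinuity comes from strict convexity of $\Why$ and Fatou's lemma applied to $\Wel$ and $\Wpf$, while the theory of Healey--Kr\"omer combined with the Morrey embedding $W^{2,\beta}\hookrightarrow C^{1,\lambda}$ (using $\beta>d$) yields a uniform positive lower bound on $\det\nabla\by$ along minimizing sequences, so that $\bfY$ is closed in the appropriate topology.

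Testing the minimizer with the previous iterate yields a discrete energy-dissipation inequality, while inner variations produce the associated discrete Euler--Lagrange equations: in $\psi$ with a discrete chemical potential $\mu_k^N$, and in $\by$ with test directions $\eta\bs{\phi}$, $\bs{\phi}\in W^{2,\beta}_{\DBC}(\Omega;\R^d)$, implemented through the $\bvD$-composition \eqref{equ:DBC_with_y}. The derivation of the latter uses the stress control \eqref{equ:stress_contral_KH2} to justify differentiation under the integral in the spirit of Ball~\cite{ball2002some}, now facilitated by the hyperstress regularization. Summing the discrete energy-dissipation inequality in $k$ gives uniform bounds on $\Wel(\Fe^N)$, $\Why(\nabla\bF^N)$ and $\det(\nabla\bF^N)^{-q}$ in $L^\infty(0,T;L^1)$, on $\mu^N$ in $L^2(0,T;H^1)$, and on the viscous dissipation. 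The time-derivative contribution to $\fe$ coming from $\bvD$ is bounded, via \eqref{equ:stress_contral_KH2} and \eqref{equ:prop-R_22}, by a constant times $\|\nabla^2\bvD\|_\infty\cdot(1+\Wel+\Why)$; the smallness condition \eqref{eq:smallness} is precisely what allows this contribution to be absorbed by the $\alpha$- and $\gamma$-coercivity of $\Wel$ and $\Why$ and closed by a Gronwall argument. The generalized Korn inequality of Neff~\cite{neff2002korn} and Pompe~\cite{pompe2003korn} applied to the lower bound in \eqref{equ:fuer_korn} then promotes the viscous dissipation bound into a uniform estimate on the discrete time derivative $\tfrac{\by_k^N-\by_{k-1}^N}{\tau}$ in $L^2(0,T;H^1(\Omega;\R^d))$.

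Interpolating the iterates piecewise constantly and piecewise linearly in time, Aubin--Lions with the compact embedding $W^{2,\beta}\hookrightarrow C^{1,\lambda}$ gives, along a subsequence, strong convergence $\by^N\to\by$ in $C([0,T];C^{1,\lambda}(\overline{\Omega};\R^d))$, while $\psi^N$ converges strongly in $C([0,T];L^r(\Omega))$ for any $r<\infty$ and $\partial_t\by^N\rightharpoonup\partial_t\by$ in $L^2(0,T;H^1)$. The main obstacle will be the limit passage in the elastic Euler--Lagrange equation \eqref{equ:elast}. The hyperstress $\partial_{\bG}\Why(\nabla\bF^N)$ is only bounded in $L^{\beta/(\beta-1)}$, so its weak limit has to be identified via a Minty-type argument exploiting the strict convexity of $\Why$ together with the uniform continuity \eqref{stress_c_reg}. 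The Piola--Kirchhoff stress $\partial_{\bA}\Wel(\Fe^N)$ is only integrably bounded through \eqref{equ:stress_contral_KH2}; strong $C^{1,\lambda}$-convergence of $\by^N$, pointwise convergence of $\psi^N$, the uniform continuity \eqref{equ:uniform_continuity_stresses} and Vitali's convergence theorem together deliver strong $L^1$-convergence of this stress. The viscous term passes to the limit as a weak-strong product thanks to the quadratic structure \eqref{eq:Vhat.quadratic}, since $\bD(\bC^N,\psi^N)$ converges uniformly while $\partial_t\bF^N$ converges only weakly. The coupling terms $\bF^{-T}\nabla\psi$ in \eqref{eq:Wder.psi}--\eqref{eq:Wpfder.F} and the $\Wpf$-derivative in the force balance are handled using the uniform determinant bound and the strong convergence of $\bF^N$. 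Finally, taking $\liminf$ in each term of the discrete energy-dissipation inequality yields the limiting estimate \eqref{equ:energy_O}.
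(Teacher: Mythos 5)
Your overall strategy — monolithic incremental minimization, direct method for the discrete problems, Healey--Kr\"omer for the lower determinant bound, generalized Korn for the $H^1$ bound on $\partial_t\by$, Aubin--Lions plus Minty for the limit passage — is essentially the same as the paper's proof. Two substantive points are off, however, and a third deserves mention.

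First, the role of the smallness condition \eqref{eq:smallness} is misidentified. You claim it is needed to absorb the time-derivative contribution $\partial_t\fe$ and close a Gronwall argument, but in the paper the estimate $|\partial_t\fe|\le c_1(\fe+c_0)$ (Lemma \ref{OO_lemma:derivative_estimate}) holds \emph{without} smallness, so Gronwall closes unconditionally. The smallness is used for \emph{coercivity} of $\fe$ in $(\by,\psi)$ (Lemma \ref{lemma:Coercive_LowerSemi}): because $\nabla^2\bv$ written via \eqref{equ:componentwise-zweite} contains the term $\nabla\by^T\nabla^2\bvD\,\nabla\by$, the growth $\gamma|\nabla\bF|^\beta$ only controls $\|\nabla^2\by\|_{L^\beta}$ after subtracting $\gamma\|\nabla^2\bvD\|_\infty^\beta\|\nabla\by\|_{L^{2\beta}}^{2\beta}$, and this lower-order contribution must be absorbed by the $\alpha$-coercivity of $\Wel$ (using $p\ge 2\beta$). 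Without that absorption the incremental minimization problems would not even be well posed.

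Second, "taking $\liminf$ in each term" does not handle the right-hand side of the energy-dissipation inequality, because the term $\int_0^t\partial_t\fe(s,\underline{\by}_M(s),\underline{\psi}_M(s))\,\ds$ is an upper bound and must genuinely \emph{converge}, not be estimated from below. The paper obtains this via \cite[Prop.\ 3.3]{francfort2006existence} (Lemma \ref{lem:Fg.conv}): pointwise weak convergence of $(\underline{\by}_M(t),\underline{\psi}_M(t))$ plus convergence of the energies $\fe(t,\underline{\by}_M(t),\underline{\psi}_M(t))\to\fe(t,\by(t),\psi(t))$ plus uniform continuity of $\partial_t\fe$ on sublevel sets give the convergence of $\partial_t\fe$, and uniform boundedness then allows dominated convergence in time. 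In particular, the energy convergence is itself nontrivial and rests on the strong convergences \eqref{o-special-1} and \eqref{o-special-2}.

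Third, and related: the strong convergence $\nabla\psi_M\to\nabla\psi$ in $L^2(0,T;L^2(\Omega))$ is \emph{not} delivered by Aubin--Lions, which only gives $\hat\psi_M\to\psi$ in $C([0,T];L^2(\Omega))$ (upgradable to $C([0,T];L^r)$ for $r<6$ by interpolation with the $L^\infty(0,T;H^1)$ bound, but without gradient control). The paper obtains the gradient convergence by a second Minty-type argument: testing the discrete chemical-potential equation \eqref{equ:weak_chem} with $\overline\psi_M$ and with $\psi$, passing to the limit in both, and comparing the resulting identities for $\int|\bF^{-T}\nabla\psi|^2$. This is needed both for energy convergence and for the coupling terms \eqref{eq:Wder.psi}--\eqref{eq:Wpfder.F}. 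Your Minty argument for $\Why$ (which in the paper yields strong $L^\beta$ convergence of $\nabla^2\overline\bv_M$, after which the hyperstress converges by dominated convergence and \eqref{equ:prop-R_22}) is on the right track, but you should note it is implemented by inserting $\bw=\overline{\bv}_M-\bv$ in the discrete Euler--Lagrange equation, not by directly identifying the weak limit of the hyperstress.
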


The proof of Theorem~\ref{Thm:main_result_fast}
will be given in Section~\ref{sec:finalproof}.

If $(\by,\psi)$ is a weak solution as in Theorem~\ref{Thm:main_result_fast},
then we can use the transformation~\eqref{equ:DBC_with_y},
to obtain the actual deformation 
$\bv(t,x):=\bvD(t,\by(x))$
from the auxiliary variable $\by$.
Then $(\bv,\psi)$ has the same regularity as $(\by,\psi)$ 
and satisfies~\eqref{equ:time},~\eqref{equ:chem} and~\eqref{equ:elast} with $\bF:=\nabla\bv$.
Thus, $(\bv,\psi)$ can also be considered a weak solution to~\eqref{equ:pde}.
Nevertheless, we chose the formulation of Theorem~\ref{Thm:main_result_fast}
in terms of the auxiliary variable $\by$
since it
enables us to identify the term $\partial_t\fe$
in the energy-dissipation inequality~\eqref{equ:energy_O}
as the power of external forces
due to the time-dependent boundary conditions.
Moreover, \eqref{equ:energy_O} reveals that the energetic principles behind the 
time evolution of the system:
It combines energy storage via the free energy $\fe$ 
with energy dissipation due to two effects,
one arising from the chemical potential $\mu$,
and the other due to the viscosity potential $V$
via~\eqref{eq:viscousdissipation}.

Observe that the smallness condition 
\eqref{eq:smallness} is trivially satisfied if $\bvD(t,\cdot)$ is affine linear for every $t\in[0,T]$
since then $\nabla^2\bvD=0$.
In particular, the prescription of time-dependent dilatation or shearing
of the boundary part $\DBC$ is admissible without size restrictions.
If $\nabla^2\bvD=0$ is not satisfied,
the second-order gradient $\nabla^2\bv$ of the deformation 
also contains first-order derivatives of the auxiliary variable $\by$,
see~\eqref{equ:grad_v_NH} below.
These first-order terms thus appear in the hyperstress potential $\Why$
and 
the smallness condition ensures their compensation by the elastic potential $\Wel$
due to the growth condition in assumption~\ref{Ass1},
so that the free energy $\fe$ is coercive
in terms of the variables $(\by,\psi)$,
as shown in Lemma~\ref{lemma:Coercive_LowerSemi} below.

\section{Preliminary results}
\label{sec:preliminaries}

In this section, we first study how convergence properties 
can be transferred
from the auxiliary variable $\by$
or the phase-field $\psi$  to convergence of associated functions.
Afterwards, we 
derive several properties of the energy functional $\fe$.

\subsection{Convergence properties}

By the construction in \eqref{equ:DBC_with_y}, the deformation gradient has a multiplicative structure when expressed in the auxiliary variable $\by$, 
namely
\begin{align}\label{equ:grad_v_NH}
    \bF(t,x)=\nabla \bv(t,x) = \nabla_{\by}\bvD(t,\by(t,x))\nabla \by(t,x)\quad\text{for }(t,x)\in[0,T]\times\Omega\,,
\end{align}
where $\nabla_{\by}\bvD$ denotes the derivative of $\bvD$ with respect to the second argument $\by$.
The second-order gradient of $\bv:\Omega\to\R^d$, which we denote by $\nabla\bF=\nabla^2\bv$, is a third-order tensor
with components 
\begin{align}\label{equ:componentwise-zweite}
\begin{split}
    \left(\nabla\bF\right)_{ijk}
    =\left(\nabla^2\bv\right)_{ijk}
    =\partial_j\partial_k\bv_i
    & =  {(\nabla\by_j)}^T{(\nabla_{\by\by}^2\bv_{\D,i})}{(\nabla\by_k)}
    + (\nabla_{\by}\bv_{\D,i})^T{\partial_j\partial_k\by}\,
\end{split}
\end{align}
for $i,j,k\in\{1,\dots,d\}$.

In the following lemma, we compare Lebesgue and Sobolev norms 
of $\by$ and $\bv=\bvD(t,\by)$.
In particular, we conclude that the relation \eqref{equ:DBC_with_y} induces a homeomorphism between
$\bfY$ and $\bfU(t)$
defined in~\eqref{def-Ut} and~\eqref{def-Y},
and that strong convergence transfers from one space to the other.

\begin{lemma}\label{lemma:estvy}
Let $\bvD$ be as in \ref{Ass7}.
Then $\bvD(t.\cdot)$ is bijective for all $t\in[0,T]$,
and there is a constant $C>0$, independent of $t$, such that
for all  $\by\in\bfY$ and $\bv(t,x)=\bvD(t,\by(x))$
and all $p\in[1,\infty)$,
we have
\begin{align}
    \norm{\bv}_{W^{1,p}(\Omega)}
    &\leq C(1+\norm{\by}_{W^{1,p}(\Omega)}),
    \label{est:vy}
    \\
    \norm{\by}_{W^{1,p}(\Omega)}
    &\leq C(1+\norm{\bv}_{W^{1,p}(\Omega)}),
    \label{est:yv}
    \\
    \norm{\nabla^2\bv}_{L^p(\Omega)}
    &\leq C(\norm{\nabla\by}_{L^{2p}(\Omega)}^2
    +\norm{\nabla^2\by}_{L^p(\Omega)}),
    \label{est:vyder2}
    \\
    \norm{\nabla^2\by}_{L^p(\Omega)}
    &\leq C(\norm{\nabla\bv}_{L^{2p}(\Omega)}^2
    +\norm{\nabla^2\bv}_{L^p(\Omega)}).
    \label{est:yvder2}
\end{align}
Moreover, $\by\mapsto\bvD(t,\by)$
defines a homeomorphic mapping from $\bfY$ to $\bfU(t)$
if $\beta\in(1,\infty)$ with $\beta\geq d/2$.
\end{lemma}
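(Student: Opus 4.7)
My plan is to organize the argument in three steps: global invertibility of $\bvD(t,\cdot)$, pointwise/Sobolev estimates derived via the chain rule, and the continuity needed for the homeomorphism statement.

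\textbf{Step 1 (Global bijection).} Fix $t\in[0,T]$. By \ref{Ass7}, $F_t:=\bvD(t,\cdot)$ is a $C^2$-map $\R^d\to\R^d$ whose Jacobian is pointwise invertible with uniform bound $|(\nabla_{\by}\bvD)^{-1}|\leq C$. I would invoke the Hadamard--Levy global inversion theorem to conclude that $F_t$ is a $C^2$-diffeomorphism of $\R^d$, with $F_t^{-1}$ globally Lipschitz; the implicit function theorem together with \ref{Ass7} further gives uniform $L^\infty$-bounds on $\nabla^2 F_t^{-1}$ expressed via $(\nabla_{\by}\bvD)^{-1}$ and $\nabla^2_{\by\by}\bvD$.

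\textbf{Step 2 (Sobolev estimates).} For $\by\in\bfY$ and $\bv:=F_t(\by)$, the chain rule $\nabla\bv=\nabla_{\by}\bvD(t,\by)\nabla\by$ and the second-order identity \eqref{equ:componentwise-zweite} --- combined with the $L^\infty$-bounds from \ref{Ass7} and the fact that $\sup_{t\in[0,T]}|\bvD(t,0)|<\infty$ (a consequence of $\bvD\in C^1([0,T];C^2(\R^d;\R^d))$) --- yield the pointwise controls
\[
|\bv|\leq C(1+|\by|),\quad |\nabla\bv|\leq C|\nabla\by|,\quad |\nabla^2\bv|\leq C(|\nabla\by|^2+|\nabla^2\by|),
\]
whose $L^p$-integration over $\Omega$ produces \eqref{est:vy} and \eqref{est:vyder2}. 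Applying the same procedure with $F_t$ replaced by $F_t^{-1}$, using the derivative bounds from Step~1, gives \eqref{est:yv} and \eqref{est:yvder2}.

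\textbf{Step 3 (Homeomorphism).} The boundary identity $\bv=\bvD(t,\cdot)$ on $\DBC$ follows from $\by=\id$ there, and $\det\nabla\bv=\det\nabla_{\by}\bvD(t,\by)\,\det\nabla\by>0$ from \ref{Ass7} together with $\by\in\bfY$, so $F_t$ maps $\bfY$ into $\bfU(t)$; the inclusion $F_t^{-1}(\bfU(t))\subset\bfY$ is analogous. Continuity in $W^{2,\beta}$ is obtained by a standard Nemytskii-operator argument: given $\by_n\to\by$ in $W^{2,\beta}$, the embedding $W^{2,\beta}\hookrightarrow W^{1,2\beta}$ (valid exactly when $\beta\geq d/2$, which is what that condition provides) together with the uniform bounds on $\nabla_{\by}\bvD$ and $\nabla^2_{\by\by}\bvD$ allow passage to the limit in splittings such as $\nabla\bv_n-\nabla\bv=[\nabla_{\by}\bvD(t,\by_n)-\nabla_{\by}\bvD(t,\by)]\nabla\by_n+\nabla_{\by}\bvD(t,\by)[\nabla\by_n-\nabla\by]$ by dominated convergence; the second-derivative splitting is treated the same way, and the inverse map is handled symmetrically. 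The only step genuinely beyond chain-rule bookkeeping is Step~1, where Hadamard--Levy is tailor-made by the uniform bound on $(\nabla_{\by}\bvD)^{-1}$ in \ref{Ass7}.
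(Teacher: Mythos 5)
Your proposal is correct and follows the same overall architecture as the paper's proof (global inversion via Hadamard, chain-rule estimates for the derivative bounds, and a dominated-convergence argument for continuity), but two of your steps take a slightly different route that is worth noting. For the zeroth-order estimates \eqref{est:vy} and \eqref{est:yv}, you bound $|\bv|\leq C(1+|\by|)$ pointwise using $\sup_t|\bvD(t,0)|<\infty$ and the uniform Lipschitz bound on $\bvD(t,\cdot)$, whereas the paper anchors via the boundary identity $\bv=\bvD$ on $\DBC$ and invokes Poincar\'e's inequality on $\Omega$; both are valid, and yours is somewhat more direct since it doesn't need the boundary condition for this particular estimate. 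For \eqref{est:yvder2}, you differentiate the inverse map $F_t^{-1}$ twice and bound $\nabla^2 F_t^{-1}$, while the paper avoids this by simply left-multiplying the identity \eqref{equ:componentwise-zweite} by $(\nabla_{\by}\bvD)^{-1}$ and solving algebraically for $\nabla^2\by$ (their equation \eqref{eq:grad2y}); the paper's route is a bit more economical as it never needs second derivatives of the inverse map. Your continuity argument via Nemytskii splittings and dominated convergence is a paraphrase of the paper's a.e.-subsequence-plus-Pratt argument; note that for $\beta\in[d/2,d]$ one does not have $W^{2,\beta}\hookrightarrow C^1$, so uniform convergence of $\by_n$ is not available and the a.e.~subsequence step (implicit in your "dominated convergence") should be made explicit. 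No genuine gap.
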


\begin{proof}
Bijectivity of $\bvD(t,\cdot)$ follows directly from
the invertibility of $\nabla \bvD(t,\cdot)$ in all points
and Hadamard's inverse function theorem.
Since $\nabla \bvD(t,\cdot)$ and its matrix inverse are bounded,
from~\eqref{equ:grad_v_NH} we directly obtain
\begin{equation}\label{est:vyder}
    \norm{\nabla\bv}_{L^p(\Omega)}
    \leq C\norm{\nabla\by}_{L^p(\Omega)},
    \qquad
     \norm{\nabla\by}_{L^p(\Omega)}
    \leq C\norm{\nabla\bv}_{L^p(\Omega)}.
\end{equation}
Moreover, we can use Poincar\'e's inequality 
to conclude
\[
\begin{aligned}
\norm{\bv}_{L^p(\Omega)}
&\leq \norm{\bv-\bvD}_{L^p(\Omega)}
+\norm{\bvD}_{L^p(\Omega)}
\\
&\leq C\norm{\nabla(\bv-\bvD)}_{L^p(\Omega)}
+\norm{\bvD}_{L^p(\Omega)}
\leq C(1+\norm{\nabla\bv}_{L^p(\Omega)})
\end{aligned}
\]
and
\[
\begin{aligned}
\norm{\by}_{L^p(\Omega)}
&\leq \norm{\by-\id}_{L^p(\Omega)}
+\norm{\id}_{L^p(\Omega)}
\\
&\leq C\norm{\nabla(\by-\id)}_{L^p(\Omega)}
+\norm{\id}_{L^p(\Omega)}
\leq C(1+\norm{\nabla\by}_{L^p(\Omega)}).
\end{aligned}
\]
Combining these estimates yields~\eqref{est:vy} and~\eqref{est:yv}.
Similarly,~\eqref{equ:componentwise-zweite} and~\eqref{equ:BDC_2}
directly yield~\eqref{est:vyder2}.
By multiplying with the inverse matrix $(\nabla\bvD)^{-1}$
we further deduce from~\eqref{equ:componentwise-zweite} that
\begin{equation}
\partial_j\partial_k\by
= (\nabla\bvD)^{-1}\big(\partial_j\partial_k \bv - (\nabla \by_j\otimes\nabla\by_k):\nabla^2\bvD \big),
\label{eq:grad2y}
\end{equation}
which allows to conclude~\eqref{est:yvder2} from~\eqref{equ:BDC_2}
and the first-order estimate~\eqref{est:vyder}.

If $\beta\geq d/2$, we have $W^{1,\beta}(\Omega)\hookrightarrow L^{2\beta}(\Omega)$.
Hence, the previous estimates show that $\by\in W^{2,\beta}(\Omega)$
if and only if $\bv(t)=\bvD(t,\by)\in W^{2,\beta}(\Omega)$.
Moreover, since
\[
\det(\nabla\bv(t))=\det(\nabla_{\by}\bvD(t,\by))\det(\nabla\by),
\]
we see that $\det(\nabla\bv(t))>0$ if and only if $\det(\nabla\by)>0$.
Therefore, we obtain the bijectivity of the mapping
$\bfY\to\bfU(t)$, $\by\mapsto\bvD(t,\by)$.
To show its continuity, consider a convergent sequence $(\by_k)_k\subset\bfY$
with (strong) limit $\by\in\bfY$,
and let $\bv_k(t)=\bvD(t,\by_k)$ and $\bv(t)=\bvD(t,\by)$.
We can choose a subsequence $(\by_{k_m})_m$ converging a.e.,
and using the continuity properties of $\bvD$,
we obtain $\bv_{k_m}(t,x)\to\bv(t,x)$, 
$\nabla\bv_{k_m}(t,x)\to\nabla\bv(t,x)$
and $\nabla^2\bv_{k_m}(t,x)\to\nabla^2\bv(t,x)$
a.e.~as $m\to\infty$
from~\eqref{equ:grad_v_NH} and~\eqref{equ:componentwise-zweite}.
In virtue of the Sobolev embeddings $W^{1,\beta}(\Omega)\hookrightarrow L^{2\beta}(\Omega)$,
we can use the previous estimates and 
Pratt's theorem (see \cite[Ch.\,6, Satz 5.1]{elstrodt2018mass} for instance)
to conclude that $\bv_{k_m}(t)\to\bv(t)$ in $\bfU(t)$ as $m\to\infty$.
Since the limit is the same for any choice of subsequence, 
we obtain convergence of the original sequence $(\bv_{k})_k$.
To show continuity of the inverse mapping 
from $\bfU(t)$ to $\bfY$, 
we can proceed analogously and make use of~\eqref{eq:grad2y}
instead of~\eqref{equ:componentwise-zweite}.
This completes the proof.
\end{proof}

The following result is fundamental to transfer weak convergence properties of the auxiliary variable $\by\in\bfY 
$ 
to the deformation $\bv=\bvD(t,\by)\in\bfU(t)$.

\begin{lemma}\label{lemma:conv_gradient_strong}
Let assumption~\ref{Ass7} hold true.
Let $\by\in\bfY$, and let ${(\by_k)}_{k\in\N}\subset\bfY$ be weakly convergent with $\by_k\rightharpoonup\by$ in $W^{2,\beta}(\Omega;\R^d)$ 
for $\beta>d$.
Then, for all $t\in[0,T]$   it follows 
\begin{equation}\label{equ:conv2}
    \bv_k(t)=\bvD(t,\by_k)\rightharpoonup \bvD(t,\by) = \bv(t) \quad\text{in}\quad\bfU(t)\,.
\end{equation}
\end{lemma}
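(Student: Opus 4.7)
The plan is to combine the chain-rule formulas \eqref{equ:grad_v_NH} and \eqref{equ:componentwise-zweite} with the compact Sobolev embedding $W^{2,\beta}(\Omega)\hookrightarrow\mathrm C^1(\overline\Omega)$, which is available because $\beta>d$. Since $\bfU(t)$ is a closed subspace of $W^{2,\beta}(\Omega;\R^d)$, to show $\bv_k(t)\rightharpoonup\bv(t)$ in $\bfU(t)$ it suffices to verify a uniform $W^{2,\beta}$-bound together with identification of the weak limit componentwise.

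The uniform bound follows at once from the $W^{2,\beta}$-bound on $(\by_k)_k$ (guaranteed by weak convergence in $W^{2,\beta}$) and the estimate \eqref{est:vyder2} of Lemma~\ref{lemma:estvy}, since $\nabla \by_k$ is bounded in $L^{2\beta}(\Omega)$ via the Sobolev embedding $W^{1,\beta}(\Omega)\hookrightarrow L^{2\beta}(\Omega)$ (using $\beta>d\geq d/2$). For the identification of the limit I would argue in three steps. First, the compact embedding gives $\by_k\to\by$ in $\mathrm C^1(\overline\Omega)$, and by \ref{Ass7} the maps $\nabla_\by\bvD(t,\cdot)$ and $\nabla^2_{\by\by}\bvD(t,\cdot)$ are continuous and bounded, so $\nabla_\by\bvD(t,\by_k)\to\nabla_\by\bvD(t,\by)$ and $\nabla^2_{\by\by}\bvD(t,\by_k)\to\nabla^2_{\by\by}\bvD(t,\by)$ uniformly on $\overline\Omega$. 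Second, using \eqref{equ:grad_v_NH}, $\nabla\bv_k(t,\cdot)=\nabla_\by\bvD(t,\by_k)\nabla\by_k$ converges \emph{strongly} in $\mathrm C^0(\overline\Omega)$, hence in $L^\beta(\Omega)$, to $\nabla_\by\bvD(t,\by)\nabla\by=\nabla\bv(t,\cdot)$.

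Third, and this is where the main (though mild) difficulty lies, I would treat the second-order part using~\eqref{equ:componentwise-zweite}. The first summand $(\nabla\by_{k,j})^T\nabla^2_{\by\by}\bv_{\D,i}(\nabla\by_{k,k})$ is a product of sequences converging uniformly, so it converges strongly in $L^\beta$ to the corresponding expression in $\by$. The second summand $(\nabla_\by\bv_{\D,i})^T\partial_j\partial_k\by_k$ is a product of a sequence converging uniformly (hence strongly in $L^\infty$) with a sequence that only converges weakly in $L^\beta$. The standard fact that the product of an $L^\infty$-strongly convergent and an $L^\beta$-weakly convergent sequence converges weakly in $L^\beta$ (apply it after testing against an arbitrary $\varphi\in L^{\beta'}(\Omega)$) then yields weak $L^\beta$-convergence to $(\nabla_\by\bv_{\D,i})^T\partial_j\partial_k\by$. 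Adding the two contributions identifies the weak $L^\beta$-limit of $\nabla^2\bv_k$ as $\nabla^2\bv(t)$.

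Combined with the uniform $W^{2,\beta}$-bound, a subsequence principle shows $\bv_k(t)\rightharpoonup\bv(t)$ in $W^{2,\beta}(\Omega;\R^d)$; since every subsequence produces the same limit, the full sequence converges. Finally, the closedness of $\bfU(t)$ in $W^{2,\beta}(\Omega;\R^d)$ (already established before \eqref{def-Y}) ensures that this is weak convergence in $\bfU(t)$, which is exactly~\eqref{equ:conv2}. The only step one should pay attention to is the weak convergence of the product $(\nabla_\by\bv_{\D,i})^T\partial_j\partial_k\by_k$; everything else is essentially an application of the $\mathrm C^1$-compactness of $(\by_k)_k$ together with the a priori estimates from Lemma~\ref{lemma:estvy}.
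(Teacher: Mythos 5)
Your proof is correct and follows the same route as the paper's: compact embedding $W^{2,\beta}\Subset C^1(\overline\Omega)$ to get uniform convergence of $\by_k$ and $\nabla\by_k$, then the chain rule formulas to pass this to $\bv_k$ and $\nabla\bv_k$, and for $\nabla^2\bv_k$ the splitting into a strongly converging product term plus a (uniform)$\times$(weak) product term. Your additions — making the uniform $W^{2,\beta}$-bound explicit via Lemma~\ref{lemma:estvy} and closing with a subsequence principle — only spell out what the paper leaves implicit.
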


\begin{proof} 
Due to the compact embedding $W^{2,{\beta}}(\Omega;\R^d)\Subset C^1(\overline{\Omega};\R^d)$, by Rellich's theorem, we have
\begin{align}\label{uni-conv}
    \by_k\to\by\,, \nabla\by_k\to\nabla\by \quad \text{ uniformly in } \overline{\Omega} \text{ as } k\to\infty\,.
\end{align}
Since $\bvD(t,\cdot)$ and $\nabla_{\by}\bvD(t,\cdot)$ are continuous by \ref{Ass7},
we conclude the convergence
\begin{equation}\label{equ:strake_conv}
\begin{alignedat}{2}
    \bv_k(t) =\bvD(t,\by_k)
    &\to\bvD(t,\by)=\bv(t) \quad && \text{in }C(\overline{\Omega};\R^d)\,,\\
    \nabla \bv_k(t) =\nabla_{\by}\bvD(t,\by_k)\nabla \by_k
    &\to\nabla_{\by}\bvD(t,\by)\nabla \by= \nabla \bv(t)\quad && \text{in }\,C(\overline{\Omega};\R^{\dxd})\,
\end{alignedat} 
\end{equation}
for all $t\in[0,T]$.
Moreover, for $i,j,\ell\in\{1,\dots,d\}$, we have
\begin{align*}
    {\left(\nabla^2\bv_{k}(t,x)\right)_{ij\ell}} 
    & =   {(\nabla\by_k)}_{j}^T {(\nabla_{\by\by}^2\bv_{\D,i}(t,\by_k))}{(\nabla\by_k)}_{\ell} + {(\nabla_{\by}\bv_{\D,i}(t,\by_k))_{\ell}}^T{(\nabla_{j\ell}^2\by_k)}\,.
\end{align*} 
The first term converges strongly in $C(\overline{\Omega})$ by the regularity assumptions on $\bvD$ and the uniform convergence 
from~\eqref{uni-conv},
and weak convergence of the second term in $L^{\beta}(\Omega)$
follows from \eqref{equ:strake_conv} and the weak convergence $\nabla^2\by_k\rightharpoonup\nabla^2\by$. 
This shows the weak convergence 
\begin{align}\label{equ:schwache_konv}
    \nabla^2\bv_k(t)\rightharpoonup\nabla^2\bv(t)\quad\text{in }L^{\beta}(\Omega;\R^{\dxd  \times d})\,
\end{align}
for $t\in[0,T]$.
Summarizing \eqref{equ:strake_conv} and \eqref{equ:schwache_konv} yields \eqref{equ:conv2}. 
\end{proof}

Next we study the convergence of the plasticity term $g(\psi)$
when we have weak convergence in the phase-field variable $\psi$.

\begin{lemma}\label{lemma:g_conv_NEO}
Let $(\psi_k)_{k\in\N}\subset H^1(\Omega)$ and $\psi\in H^1(\Omega)$ such that $\psi_k\rightharpoonup\psi$ in $H^1(\Omega)$
as $k\to\infty$.
Let $g$ satisfy assumption~\ref{Ass6},
and define $g_k(x):=g(\psi_k(x))$ and $g_\infty(x):=g(\psi(x))$.
The following properties are valid:
\begin{enumerate}[label=(\roman*)]
    \item \label{item:null}  For all $q\in[1,6)$ it holds 
    \begin{equation}\label{equ:strong_gk}
    g_k\to g_\infty\quad\text{ and }\quad\frac{1}{g_k}\to\frac{1}{g_\infty}\quad\text{in }L^q(\Omega) \quad \text{ as } k\to\infty
    \end{equation} 
    \item \label{item:eins} Let $p\in[1,\infty)$ and let $(\bG_k)_{k\in\N}\subset L^{p}(\Omega;\R^{\dxd})$ and $\bG\in L^{p}(\Omega;\R^{\dxd})$ 
    such that $\bG_k\to\bG$ in $L^{p}(\Omega;\R^{\dxd})$. Then 
    \begin{equation}\label{equ:strong-conv-g}
    { \frac{1}{g_k} }\bG_k\to\frac{1}{g_\infty} \bG\quad\text{ in }
    L^{p}(\Omega;\R^{\dxd})\,.        
    \end{equation}  
    \item \label{item:zwei} Let $p\in(1,\infty)$ and let $(\bF_k)_{k\in\N}\subset L^{p}(\Omega;\R^{\dxd})$ and $\bF\in L^{p}(\Omega;\R^{\dxd})$ such that $\bF_k\rightharpoonup\bF$ in $L^{p}(\Omega;\R^{\dxd})$. Then 
    \begin{equation}\label{equ:weak-conv-g}
    \frac{1}{g_k}\bF_k\rightharpoonup\frac{1}{g_\infty}\bF\quad\text{ in }
    L^{p}(\Omega;\R^{\dxd})\,.        
    \end{equation} 
\end{enumerate}  
\end{lemma}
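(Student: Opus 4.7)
The plan is to base everything on the compact Sobolev embedding $H^1(\Omega)\hookrightarrow\hookrightarrow L^q(\Omega)$ for $q\in[1,6)$, valid in dimensions $d\in\{2,3\}$, which upgrades the weak convergence $\psi_k\rightharpoonup\psi$ in $H^1$ to strong convergence in $L^q$ and, along a subsequence, to pointwise a.e.\ convergence in $\Omega$. Since $g\in C^1(\R)$ with $\underline g\leq g\leq \overline g$, both $g$ and $1/g$ are continuous and uniformly bounded on $\R$, so the a.e.\ limit $\psi_k\to\psi$ immediately transfers to $g_k\to g_\infty$ and $1/g_k\to 1/g_\infty$ a.e., with the uniform bounds $\overline g$ and $1/\underline g$ as dominating functions. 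Lebesgue's dominated convergence theorem then yields the two $L^q$-limits in \ref{item:null}, at first along the selected subsequence; by the standard principle that every subsequence admits a further subsequence converging to the same limit, the convergence extends to the whole sequence.

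For \ref{item:eins}, I would split
\begin{equation*}
\frac{1}{g_k}\bG_k-\frac{1}{g_\infty}\bG
=\frac{1}{g_k}(\bG_k-\bG)+\left(\frac{1}{g_k}-\frac{1}{g_\infty}\right)\bG.
\end{equation*}
The first summand has $L^p$-norm bounded by $\underline g^{-1}\|\bG_k-\bG\|_{L^p}\to 0$. For the second summand, the a.e.\ convergence of $1/g_k$ combined with the uniform pointwise bound $|1/g_k-1/g_\infty|\leq 2/\underline g$ and the integrability $\bG\in L^p(\Omega;\R^{d\times d})$ allows a direct application of dominated convergence to the integrand $\bigl|(1/g_k-1/g_\infty)\bG\bigr|^p$, so this summand also tends to zero in $L^p$, first along a subsequence and then unconditionally by the same uniqueness argument.

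For \ref{item:zwei} I would argue by duality: for an arbitrary test field $\phi\in L^{p'}(\Omega;\R^{d\times d})$ the difference splits as
\begin{equation*}
\int_\Omega\left(\tfrac{1}{g_k}\bF_k-\tfrac{1}{g_\infty}\bF\right):\phi\dx
=\int_\Omega(\bF_k-\bF):\tfrac{\phi}{g_\infty}\dx
+\int_\Omega\bF_k:\left(\tfrac{1}{g_k}-\tfrac{1}{g_\infty}\right)\phi\dx.
\end{equation*}
Since $\phi/g_\infty\in L^{p'}(\Omega;\R^{d\times d})$, the first integral vanishes in the limit by the assumed weak convergence $\bF_k\rightharpoonup\bF$ in $L^p$. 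For the second, $(1/g_k-1/g_\infty)\phi\to 0$ strongly in $L^{p'}$ by the very same dominated-convergence argument as in part \ref{item:eins} (now with exponent $p'$ and dominating function $(2/\underline g)|\phi|\in L^{p'}$), while $\bF_k$ is bounded in $L^p$ as a weakly convergent sequence; Hölder's inequality then forces the second integral to zero. The only delicate point throughout is the bookkeeping with subsequences, but since in each part the candidate limit is uniquely determined, the subsequence principle promotes convergence along subsequences to convergence of the full sequence, and no essential obstacle is anticipated.
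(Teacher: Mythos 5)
Your proof is correct and follows essentially the same route as the paper: compact embedding of $H^1$ into $L^q$, the decomposition $\tfrac{1}{g_k}\bG_k-\tfrac{1}{g_\infty}\bG=\tfrac{1}{g_k}(\bG_k-\bG)+(\tfrac{1}{g_k}-\tfrac{1}{g_\infty})\bG$ for (ii), and the duality split for (iii). The only minor difference is in (i): the paper obtains $\norm{g(\psi_k)-g(\psi)}_{L^q}\to 0$ directly from the Lipschitz bound $\sup_{z}|g'(z)|\,\norm{\psi_k-\psi}_{L^q}$ (avoiding any subsequence bookkeeping), whereas you go via pointwise a.e.\ convergence, dominated convergence, and the subsequence principle — both are valid.
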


\begin{proof}
From Rellich's compactness theorem, we first conclude that $\psi_k\to\psi$ in $L^q(\Omega)$ for $q\in[1,6)$, with $d\in \{2,3\}$. 
Since $|g'|$ is bounded due to assumption~\ref{Ass6}, 
we can apply the mean-value theorem to conclude that 
\[
    \|g(\psi_k)-g(\psi)\|_{L^q(\Omega)}\leq\sup_{z\in\R}|g'(z)|\|\psi_k-\psi\|_{L^q(\Omega)}\to 0\quad\text{as }k\to\infty\,.
\]
Moreover, using that  $g$ is bounded from below by \eqref{equ:alles_g-OO}, we obtain
\begin{align*}
    \|\frac{1}{g(\psi_k)}-\frac{1}{g(\psi)}\|_{L^q(\Omega)}
    \leq \frac{1}{\underline{g}^2}\|g(\psi_k)-g(\psi)\|_{L^q(\Omega)}\to 0\quad\text{as }k\to\infty\,,
\end{align*}
which shows \ref{item:null}.

Concerning \ref{item:eins}, 
we estimate
\[
\norm{g_k^{-1}\bG_k-g_\infty^{-1}\bG}_{L^p(\Omega)}
\leq \norm{g_k^{-1}(\bG_k-\bG)}_{L^p(\Omega)}
+ \norm{(g_k^{-1}-g_\infty^{-1})\bG}_{L^p(\Omega)}
\to 0 \quad\text{as }k\to\infty\,,
\]
where the first term converges due to the boundedness of $g$ from below,
and the second term converges by Lebesgue's theorem.
We thus conclude \eqref{equ:strong-conv-g}. 

To show \ref{item:zwei}, let $\bH\in L^{p'}(\Omega;\R^{\dxd})$ be arbitrary with $p'=\nicefrac{p}{(p-1)}$. 
From \eqref{equ:strong_gk} we conclude 
that $g^{-1}_k\bH\to g_\infty^{-1}\bH$ in $L^{p'}(\Omega)$.
We thus obtain 
\begin{align*}
   \intO\left(\frac{1}{g_k}\bF_k{-}\frac{1}{g_\infty}\bF\right){:}\bH\dx
    & = \intO\left(\bF_k {-}\bF\right){:}\frac{1}{g_\infty}\bH\dx +  \intO\bF_k{:}\left(\frac{1}{g_k}\bH{-} \frac{1}{g_\infty}\bH\right)\dx\to 0
\end{align*}
as $k\to\infty$. 
This implies the weak convergence asserted in \eqref{equ:weak-conv-g} and concludes the proof.
\end{proof}

\subsection{Properties of the energy functional}
We next prove additional properties of the energy functional $\fe$
defined in \eqref{def-Fg}.
At first, we show weak sequential compactness of the sublevel sets
provided that the smallness condition~\eqref{eq:smallness} is satisfied.

\begin{lemma}[Coercivity and  weak lower semiconituity of $\fe$]
\label{lemma:Coercive_LowerSemi} 
Let the assumptions~\ref{Ass0}--\ref{Ass4}, \ref{Ass6} and~\ref{Ass7}
hold true.
Then there exists $\delta_0>0$ such that if 
\[
\alpha^{-1}\gamma\norm{\nabla^2\bvD}_{L^\infty((0,T)\times\Omega)}^\beta<\delta_0,
\]
then the following properties are valid:
\begin{enumerate}[label=(\roman*)]
\item For every $t\in[0,T]$,
the functional $\fe (t,\cdot,\cdot)$ is coercive on $\bfY \times\bfZ$
in such a way that there are constants $c,\,c_1>0$ such that for all $t\in[0,T]$ it holds
\begin{align}\label{equ:coercive}
    \fe(t,\by,\psi)
    \geq 
    c\big(\alpha\|\by\|^p_{W^{1,p}(\Omega) } 
    + \|\det(\nabla \by)^{-1}\|_{L^q(\Omega)}^q
    + \gamma\|\by\|^{\beta}_{W^{2,\beta}(\Omega)}
    + \|\psi\|^2_{H^1(\Omega)} - c_1\big)
\end{align}
for all $(\by,\psi)\in\bfY \times\bfZ$.
\item 
For every $t\in[0,1]$,
the functional $\fe(t,\cdot,\cdot):\bfY \times\bfZ\to\R$
is weakly lower continuous
and its sublevel sets are weakly sequentially compact. 
\end{enumerate}
\end{lemma}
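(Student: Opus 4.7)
The plan is to split the lemma into (i) coercivity and (ii) weak lower semicontinuity, exploiting the multiplicative structure $\bF=\nabla_{\by}\bvD(t,\by)\nabla\by$, the growth conditions on $\Wel$, $\Why$ and $\Wpf$, and the three convergence results proved earlier in this section.

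For (i), the elastic potentials deliver the $\by$-coercivity. Since $g$ is bounded above and below by \ref{Ass6} and $\nabla_{\by}\bvD$ has bounded norm and bounded inverse by \ref{Ass7}, the lower bound \eqref{equ:prop-R_1} yields
\[
\int_\Omega \Wel(\Fe)\dx \;\geq\; c\alpha\|\nabla\by\|_{L^p(\Omega)}^p + c\|\det(\nabla\by)^{-1}\|_{L^q(\Omega)}^q,
\]
which upgrades to $c\alpha\|\by\|_{W^{1,p}(\Omega)}^p$-control via Poincar\'e, since $\by-\id\in W^{1,p}_{\DBC}(\Omega)$. For the second-grade contribution I rearrange \eqref{eq:grad2y} to the pointwise estimate $|\nabla^2\by|\le C(|\nabla\bF|+|\nabla^2\bvD|\,|\nabla\by|^2)$, raise it to the $\beta$-th power, integrate, and use the embedding $L^p\hookrightarrow L^{2\beta}$ (from $p\ge 2\beta$ in \ref{Ass0}) to obtain
\[
\gamma\|\nabla^2\by\|_{L^\beta}^\beta \;\le\; C\int_\Omega\Why(\nabla\bF)\dx + C\gamma\|\nabla^2\bvD\|_{L^\infty}^\beta\|\nabla\by\|_{L^p}^{2\beta} + C.
\]
The key step, and the \emph{main obstacle}, is to absorb the second term on the right into the elastic control of $\|\nabla\by\|_{L^p}^p$: when $p>2\beta$ plain Young suffices, whereas in the borderline case $p=2\beta$ the absorption works exactly when the smallness condition $\alpha^{-1}\gamma\|\nabla^2\bvD\|_{L^\infty}^\beta<\delta_0$ holds; this is what pins down the hypothesis. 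After absorption the coercivity of $\by$ in $W^{1,p}\cap W^{2,\beta}$ and of $\det(\nabla\by)^{-1}$ in $L^q$ is in place.

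For the $\psi$-coercivity I extract $\|\psi\|_{L^4}^4$-control (hence $\|\psi\|_{L^2}^2$ via Young) from the double-well $\tfrac{a}{4}(\psi^2-1)^2$. For the gradient I combine the pointwise singular-value estimate $|\bF^{-T}\nabla\psi|^2\ge|\bF|^{-2}|\nabla\psi|^2$ with the $L^\infty$-bound on $\bF$ coming from the embedding $W^{2,\beta}\hookrightarrow W^{1,\infty}$ (valid since $\beta>d$), already supplied by the $\by$-coercivity. A bookkeeping of constants then assembles \eqref{equ:coercive}.

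For (ii), let $(\by_k,\psi_k)\rightharpoonup(\by,\psi)$ in $\bfY\times\bfZ$. The compact embedding $W^{2,\beta}\Subset C^{1,\alpha}(\overline{\Omega})$ and Lemma~\ref{lemma:conv_gradient_strong} give $\bF_k\to\bF$ uniformly and $\nabla\bF_k\rightharpoonup\nabla\bF$ in $L^\beta$; Rellich yields $\psi_k\to\psi$ in $L^q$ for every $q<6$, and Lemma~\ref{lemma:g_conv_NEO} then delivers $g(\psi_k)^{-1}\to g(\psi)^{-1}$ strongly. Hence $\Fe^k\to\Fe$ a.e.\ along a subsequence, and Fatou (using $\Wel\ge 0$ continuous) gives $\liminf\int_\Omega\Wel(\Fe^k)\dx\ge\int_\Omega\Wel(\Fe)\dx$. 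Convexity of $\Why$ together with $\nabla\bF_k\rightharpoonup\nabla\bF$ in $L^\beta$ handles the hyperelastic term. For $\Wpf$, strong $L^4$-convergence of $\psi_k$ makes the double-well lower semicontinuous, while for the Korteweg term the uniform bounds on $\|\bF_k\|_{C^0}$ and $\|\det(\nabla\by_k)^{-1}\|_{L^q}$ (from (i)) and a Healey--Kr\"omer-type argument give $\bF_k^{-T}\to\bF^{-T}$ uniformly, so that $\bF_k^{-T}\nabla\psi_k\rightharpoonup\bF^{-T}\nabla\psi$ in $L^2$ and convexity of $|\cdot|^2$ closes the argument. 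Weak sequential compactness of sublevel sets is then immediate from (i) together with reflexivity of $W^{2,\beta}$ and $H^1$.
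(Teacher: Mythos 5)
Your treatment of the $\by$-coercivity is essentially the paper's: multiplicative structure of $\bF$, lower bound from \eqref{equ:prop-R_1} after dividing by $g$, rearranging \eqref{equ:componentwise-zweite}/\eqref{eq:grad2y} to exhibit $\|\nabla^2\by\|_{L^\beta}$ at the cost of a term $\norm{\nabla^2\bvD}_{L^\infty}\|\nabla\by\|_{L^{2\beta}}^2$, and absorbing this into the elastic term via $p\geq 2\beta$. Your remark that plain Young suffices when $p>2\beta$ and that the smallness condition \eqref{eq:smallness} is genuinely needed only at the borderline $p=2\beta$ is a correct refinement the paper leaves implicit.

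There is, however, a genuine gap in the $\nabla\psi$-part of (i). The paper proceeds pointwise, using a Young-type absorption of $|\bF^{-T}\nabla\psi|^2$ against $|\bF|^p$ (all under the integral sign, with constants fixed in advance), in order to obtain \eqref{equ:coercive} with a $(\by,\psi)$-independent coefficient in front of $\|\psi\|_{H^1}^2$. You instead take $|\bF^{-T}\nabla\psi|^2\geq |\bF|^{-2}|\nabla\psi|^2$ and then invoke an $L^\infty$-bound on $\bF$ "already supplied by the $\by$-coercivity." But that bound is not uniform: from the first part of your argument one only gets $\norm{\bF}_{L^\infty(\Omega)}^{\beta}\leq C\bigl(\fe(t,\by,\psi)+1\bigr)$, i.e.\ the bound depends on the very energy level you are trying to estimate. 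Feeding this back in yields at best $\|\nabla\psi\|_{L^2(\Omega)}^2\leq C\bigl(\fe(t,\by,\psi)+1\bigr)^{1+2/\beta}$, which does show that sublevel sets of $\fe(t,\cdot,\cdot)$ are bounded in $\bfY\times\bfZ$ (enough for weak sequential compactness and for everything used downstream), but it does \emph{not} assemble the inequality \eqref{equ:coercive} as an unconditional affine lower bound; the sentence "a bookkeeping of constants then assembles \eqref{equ:coercive}" is an overclaim. To recover the stated form you must absorb the $|\bF|$-dependence under the integral, as the paper does, rather than via an a posteriori $L^\infty$-bound.

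For part (ii) your hands-on argument (Fatou for $\Wel$ via a.e.\ convergence of $\Fe^k$, convexity of $\Why$, convexity of $|\cdot|^2$ for the Korteweg term) is a valid alternative to the paper's appeal to a convexity/lower-semicontinuity theorem (Dacorogna). Two small points: you should first reduce to subsequences along which $\fe(t,\by_k,\psi_k)$ is bounded (WLOG, since otherwise the $\liminf$ is $+\infty$); and you do not actually need a Healey--Kr\"omer argument to get $\bF_k^{-T}\to\bF^{-T}$ uniformly — since $\by\in\bfY\subset C^1(\overline\Omega;\R^d)$ with $\det(\nabla\by)>0$ on the compact set $\overline\Omega$, the uniform convergence $\bF_k\to\bF$ already yields a uniform lower bound for $\det\bF_k$ for large $k$.
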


\begin{proof}
At first, we show coercivity.
With the notation in \eqref{def-Fp_Ogden}, with \eqref{equ:DBC_with_y} and with the definition of $\Wel$, $\Wpf$, $\Why$ in \eqref{def-Wel-0} and \eqref{def-W_double}, and for $g$ as in \eqref{equ:alles_g-OO}, we obtain 
\begin{align}
\label{equ:coerziv_esti}
\begin{split}
    &\fe (t, \by, \psi) 
    = \int_{\Omega}\Big( \Wel(\Fe) + \frac{a}{4}(\psi^2-1)^2  + \frac{b}{2}|\bF^{-T}\nabla \psi|^2
    + \Why(\nabla\bF)
    \Big)
    \dx  \\
    & \quad 
    \geq \int_{\Omega}\Big( 
    \frac{\alpha}{2p\overline{g}^p}|\nabla \bv|^p 
    + \frac{c\underline{g}^{dq}}{q}J^{-q} + \frac{a}{8}\psi^4 - \frac{a}{4} + \hat c|\nabla \psi|^2 \Big)\dx
    + \frac{c\gamma}{\beta}\|\nabla\bF\|_{L^{\beta}(\Omega)}^{\beta}\,, 
\end{split}
\end{align}
with $\bv(t,x)=\bvD(t,\by(x))$, $\bF=\nabla \bv$ and $J=\det(\bF)$,
where we used Young's inequality to estimate
$(\psi^2-1)^2\geq \frac{\psi^4}{2}-1$ as well as
\[
|\bF^{-T}\nabla \psi|^2
\geq {\varepsilon} |\nabla\psi|^2-\frac{\varepsilon^2}{4}|\bF|^2
\geq {\varepsilon} |\nabla\psi|^2-\frac{c_p\varepsilon^2}{4}(1+|\bF|^p)
\]
for some $c_p>0$,
and we choose $\varepsilon^2=\frac{2\alpha}{c_p p\overline{g}^p}$.
For the first term on the right-hand side of \eqref{equ:coerziv_esti} we
use~\eqref{equ:grad_v_NH}, \eqref{equ:BDC_3} and Poincar\'e's inequality
to estimate
\[
\begin{aligned}
\|\nabla \bv\|_{L^p(\Omega)}
\geq C\|\nabla \by\|_{L^p(\Omega)}
&\geq C(\|\nabla(\by-\id)\|_{L^p(\Omega)} - \|\nabla \id\|_{L^p(\Omega)})
\\
&\geq C(\|( \by - \id)\|_{W^{1,p}(\Omega)} - \|\nabla \id\|_{L^p(\Omega)})
\\
&\geq C\|\by\|_{W^{1,p}(\Omega)}-\tilde c\,.
\end{aligned}
\]
Moreover, $\nabla \bv(t)=\nabla_{\by}\bvD(t,\by)\nabla \by$ implies
\[
J=\det\big(\nabla_{\by}\bvD(t,\by)\big)\det(\nabla \by)
\leq C\det(\nabla \by)
\]
by~\eqref{equ:BDC_2},
and by the boundedness of $\Omega$ and Young's inequality we deduce
\[
\|\psi\|_{L^4(\Omega)}^4
\geq C\|\psi\|_{L^2(\Omega)}^4
\geq  C\big(\|\psi\|_{L^2(\Omega)}^2 - 1\big).
\]
Using the last three inequalities in~\eqref{equ:coerziv_esti} leads to
\begin{align}
\label{equ:coerziv_esti1}
\begin{split}    
\fe (t, \by, \psi) 
    & \geq c\big(\alpha\|\by\|_{W^{1,p}(\Omega)}^p  + \|\det(\nabla \by)^{-1}\|_{L^q(\Omega)}^q +  \|\psi\|_{W^{1,2}(\Omega)}^2 - c_1\big)+ \frac{c\gamma}{\beta}\|\nabla\bF\|_{L^{\beta}(\Omega)}^{\beta}\,. 
\end{split}
\end{align}
To treat the second-order term, we set 
$B\coloneqq\norm{\nabla^2\bvD}_{L^\infty((0,T)\times\Omega)}$ and use 
\eqref{equ:componentwise-zweite} 
to observe
\begin{align*}
    \|\nabla\bF\|_{L^{\beta}(\Omega)}
    & \geq \|\nabla_{\by}\bv_{\D}^T\nabla^2\by\|_{L^{\beta}(\Omega)}
    - \|\nabla\by^T\nabla^2_{\by\by}\bv_{\D}\nabla\by\|_{L^{\beta}(\Omega)}
    \\
    &\geq C\|\nabla^2\by\|_{L^{\beta}(\Omega)}
    - B\|\nabla\by\|_{L^{2\beta}(\Omega)}^{2}\,.
\end{align*}
Due to $p\geq 2\beta$, by Young's inequalities, it follows 
\[
    \|\nabla\by\|^{2\beta}_{L^{2\beta}(\Omega)} 
    \leq C(1+\|\nabla\by \|^p_{L^p(\Omega)})\,,
\]
and similarly, 
\[
    \|\by \|^p_{W^{1,p}(\Omega)} 
     \geq \frac{1}{2}\|\by \|^p_{W^{1,p}(\Omega)} 
     + c\|\by\|^{\beta}_{W^{1,\beta}(\Omega)}
    - \tilde{c}\,.
\]
Combining these inequalities with \eqref{equ:coerziv_esti1}, we obtain
\[
\begin{aligned}
\fe (t, \by, \psi) 
    &\geq (c\alpha-C\gamma B^\beta)\|\by\|_{W^{1,p}(\Omega)}  
    \\
    &\quad
    + c\big(\|\det(\nabla \by)^{-1}\|_{L^q(\Omega)}^q +  \|\psi\|_{W^{1,2}(\Omega)}^2
    + \|\by\|^{\beta}_{W^{1,\beta}(\Omega)}-c_1\big),
\end{aligned}
\]
which yields~\eqref{equ:coercive}
if  $\alpha^{-1}\gamma B^\beta$ is sufficiently small.

To show weak lower semicontinuity,
we consider $\fe (t,\by, \psi)$ as a functional 
in $(\bF,\psi)$.
Since $W$ is a convex function in $(\nabla\psi,\nabla\bF)$,
and weak convergence of a sequence $(\by_k)$ in $\bfY$ 
yields weak convergence of $(\bF_k)=(\nabla\bvD(t,\by_k)$ 
in $W^{1,\beta}(\Omega,\R^{d\times d}$ by Lemma~\ref{lemma:conv_gradient_strong},
we obtain weak lower semicontinuity of $\fe (t,\cdot,\cdot)$,
see~\cite[Thm. 8.16]{dacorogna2007direct} for instance.
The weak sequential compactness of sublevel sets now follows from the coercivity 
property by a standard argument.
\end{proof}

Next we examine the time regularity of the 
energy functional $\fe$.
For this, we use the decomposition
\[
\begin{aligned}
    \fe(t,\by,\psi) 
    &=\intO \Wel(\Fe) \dx 
    + \intO \Wpf(\psi,\nabla\psi,\bF) \dx
    + \intO \Why(\nabla\bF) \dx
    \\
    &\eqqcolon \feel(t,\by,\psi) + \fepf(t,\by,\psi) + \fehy(t,\by),
\end{aligned}
\]
where $\bF=\nabla_{\by}\bvD(t,\by)\nabla\by$, $\Fp=g(\psi)\I$ and $\Fe=\bF\Fp^{-1}$.

\begin{lemma}\label{OO_lemma:derivative_estimate}
Let the assumptions \ref{Ass0}, \ref{Ass1}, \ref{Ass2}, \ref{Ass6}, \ref{Ass7} be satisfied,
and
let $(\by,\psi)\in \bfY\times\bfZ$.
Then 
$\calF(\cdot,\by,\psi)\in C^1([0,T])$ with
\begin{align}\label{equ:time_deriv}
\begin{split}
    \partial_t\feel (t,\by,\psi) 
    & = \int_{\Omega}\partial_{\bA}\Wel(\Fe)\Fe^T:\left[\partial_t\nabla_{\by}\bvD(t,\by))(\nabla_{\by}\bvD(t,\by))^{-1}\right] \dx, \\
    \partial_t \fepf(t,\by,\psi)  
    & = \int_{\Omega}\partial_{\bF}\Wpf (\psi,\nabla\psi,\bF) :\partial_t\bF \dx,
    \\
    \partial_t \fehy(t,\by)  
    & = \int_{\Omega}\partial_{\bG}\Why(\nabla\bF )\tdots\partial_t\nabla\bF\dx\,.
\end{split}
\end{align}
Moreover, there exist
constants $c_0,\,c_1>0$ such that
\begin{equation}\label{00-equ:est_time_deriv} 
    |\partial_t\calF(t,\by,\psi)|
    \leq c_1\left(\calF(t,\by,\psi)+c_0\right)\,, 
\end{equation}
for every $t\in[0,T]$. 
Additionally, on sublevel sets,
$\fe(\cdot,\by,\psi)$ is Lipschitz continuous
and $\partial_t \fe(\cdot,\by,\psi)$ is uniformly continuous in the following sense:
Let $F>0$ and $\varepsilon>0$.
There exist $C_F>0$ and $\delta>0$ such that 
for all $(\by,\psi)\in\bfY\times\bfZ$ with $\calF(t,\by,\psi)\leq F$ 
for some $t\in[0,T]$, 
it holds 
\begin{equation}\label{equ:Lip_estimate}
    |\calF(t,\by,\psi) - \calF(s,\by,\psi)| \leq C_F|t-s|\,,
\end{equation}
for all $s\in[0,T]$,
and 
\begin{equation}\label{eq:unifcont.est}
    |\partial_t\calF(t,\by,\psi) - \partial_{t}\calF(s,\by,\psi)|
    < \varepsilon 
\end{equation}
for all $t,s \in[0,T]$ with $|t-s|<\delta$.
\end{lemma}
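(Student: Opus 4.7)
The plan is to differentiate each of the three terms $\feel,\fepf,\fehy$ under the integral sign and then estimate pointwise. The key observation is that $t$ enters $\calF$ only through $\bF(t,x)=\nabla_{\by}\bvD(t,\by(x))\nabla\by(x)$, with
\[
\partial_t\bF=[\partial_t\nabla_{\by}\bvD(t,\by)]\nabla\by,
\qquad
(\partial_t\bF)\bF^{-1}=[\partial_t\nabla_{\by}\bvD(t,\by)](\nabla_{\by}\bvD(t,\by))^{-1},
\]
which is bounded in $L^\infty(\Omega)$ for fixed $\by$ by \ref{Ass7}. Since $\Fe=\bF/g(\psi)$ satisfies $(\partial_t\Fe)\Fe^{-1}=(\partial_t\bF)\bF^{-1}$, the formula for $\partial_t\feel$ in \eqref{equ:time_deriv} follows from the chain rule together with the trace identity $\bA:\bB\bC=(\bA\bC^T):\bB$; the formulas for $\partial_t\fepf$ and $\partial_t\fehy$ follow by direct chain rule in $\bF$ and $\nabla\bF$. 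For each fixed $(\by,\psi)\in\bfY\times\bfZ$ the interchange of $\partial_t$ and $\intO$ is justified by dominated convergence, using pointwise continuity of the integrands and majorants furnished by the growth bounds in \ref{Ass1}--\ref{Ass2} and the $W^{2,\beta}$-regularity of $\by$.

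For the estimate \eqref{00-equ:est_time_deriv} the three contributions are treated separately. In $\feel$ the stress control \eqref{equ:stress_contral_KH2} yields $|\partial_{\bA}\Wel(\Fe)\Fe^T|\leq C(1+\Wel(\Fe))$ and \ref{Ass7} gives $|[\partial_t\nabla_{\by}\bvD](\nabla_{\by}\bvD)^{-1}|\leq C$, hence $|\partial_t\feel|\leq C(1+\feel)$. The critical step is the $\fepf$ term: from \eqref{eq:Wpfder.F} and the identity above one computes
\begin{equation*}
\partial_{\bF}\Wpf(\psi,\nabla\psi,\bF):\partial_t\bF
= -b\,w^T\bigl([\partial_t\nabla_{\by}\bvD](\nabla_{\by}\bvD)^{-1}\bigr)w,
\qquad w:=\bF^{-T}\nabla\psi,
\end{equation*}
so the otherwise problematic factor $\bF^{-1}$ drops out algebraically, and $|\partial_t\fepf|\leq C\intO|w|^2\dx\leq C\fepf$ by \ref{Ass4}. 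For $\fehy$, the growth bound \eqref{equ:prop-R_22} reduces the task to estimating $\intO(1+|\nabla\bF|^{\beta-1})|\partial_t\nabla\bF|\dx$; differentiating \eqref{equ:componentwise-zweite} in $t$ and inverting the same relation yields $|\partial_t\nabla\bF|\leq C(|\nabla\by|^2+|\nabla^2\by|)$ and $|\nabla^2\by|\leq C(|\nabla\bF|+|\nabla\by|^2)$. After Young's inequality the remaining integrals $\intO|\nabla\bF|^\beta\dx$ and $\intO|\nabla\by|^{2\beta}\dx$ are bounded by $\fehy$ via \eqref{equ:prop-R_2}, and by $\intO|\bF|^p\dx$ via $p\geq 2\beta$ (using \eqref{equ:BDC_3} to pass from $\nabla\by$ to $\nabla\bv=\bF$), which in turn is controlled by $\feel$ via \eqref{equ:prop-R_1} together with the upper bound on $g$ in \ref{Ass6}.

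For the sublevel-set statements, a Gronwall argument applied to \eqref{00-equ:est_time_deriv} upgrades the single-time bound $\calF(t,\by,\psi)\leq F$ to the uniform bound $\calF(s,\by,\psi)\leq e^{c_1T}(F+c_0)=:C_F$ on all of $[0,T]$, so that \eqref{equ:Lip_estimate} follows immediately from \eqref{00-equ:est_time_deriv}. For the uniform continuity \eqref{eq:unifcont.est}, we decompose $\partial_t\calF(t,\by,\psi)-\partial_t\calF(s,\by,\psi)$ along the integrands in \eqref{equ:time_deriv}: the factors involving $\bvD$ are uniformly continuous in $t\in[0,T]$ by \ref{Ass7}, while the stress differences $\partial_{\bA}\Wel(\Fe(t))-\partial_{\bA}\Wel(\Fe(s))$ and $\partial_{\bG}\Why(\nabla\bF(t))-\partial_{\bG}\Why(\nabla\bF(s))$ are made uniformly small on the sublevel set by applying \eqref{equ:uniform_continuity_stresses} (with $\bB=\Fe(t)\Fe(s)^{-1}$ close to $\I$ for $|t-s|$ small) and \eqref{stress_c_reg}, combined with the uniform bound $C_F$.

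The main obstacle is the $\fepf$ estimate: without the algebraic cancellation that removes $\bF^{-1}$ before integration, one would need a uniform lower bound on $\det\bF$ that is not available at this level of regularity. A secondary technical point is ensuring that all constants in the uniform-continuity argument are independent of $(\by,\psi)$ on the sublevel set, which is precisely what the quantitative continuity assumptions \eqref{equ:uniform_continuity_stresses} and \eqref{stress_c_reg} were tailored to provide.
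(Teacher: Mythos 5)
Your proposal is correct and follows essentially the same route as the paper: chain rule plus dominated convergence for the formulas \eqref{equ:time_deriv}, the stress-control bound \eqref{equ:stress_contral_KH2} for $\feel$, the algebraic cancellation $\partial_{\bF}\Wpf:\partial_t\bF=-b\,w^T[(\partial_t\bF)\bF^{-1}]w$ for $\fepf$, the growth bound \eqref{equ:prop-R_22} with $p\geq2\beta$ for $\fehy$, Gronwall for the Lipschitz bound, and the uniform-continuity-of-stress hypotheses \eqref{equ:uniform_continuity_stresses}, \eqref{stress_c_reg} for \eqref{eq:unifcont.est}. The only cosmetic differences are that the paper works with explicit difference quotients rather than invoking dominated convergence directly, and it uses the multiplicative comparison $\bB=\Fe(s)\Fe(t)^{-1}$ rather than your $\Fe(t)\Fe(s)^{-1}$, which is immaterial.
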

Estimate~\eqref{00-equ:est_time_deriv} 
shows that the power of the external forces
$\partial_t\calF$, in terms of the Dirichlet data, 
can be controlled by the energy.

\begin{proof}[Proof of \Cref{OO_lemma:derivative_estimate}]
We first note that for all $\by\in\bfY$
we have $\by\in \mathrm{C}^1(\overline{\Omega};\R^d)$,
and together with~\eqref{equ:prop-R_2} and \eqref{equ:alles_g-OO}
we conclude that
$\calF(t,\by,\psi)<\infty$ for all
$t\in[0,T]$ and $(\by,\psi)\in\bfY\times\bfZ$.
Moreover, $t\mapsto\calF(t,\by,\psi)$
is continuous due to regularity properties of $\bvD$ from~\ref{Ass6}.
To show the differentiability
and the estimate \eqref{00-equ:est_time_deriv}, 
we fix $t\in[0,T]$.
For $h\in\R$ such that $t+h\in[0,T]$,
the difference quotient is given by
\begin{align*}
    \frac{1}{h}\left(\feel(t+h,\by)-\feel(t,\by)\right)
    & = \int_\Omega \frac{1}{h}\int_0^h\partial_{\bA}\Wel(\Fe(t+\theta,x)):\partial_t(\Fe(t+\theta,x))\,\dtheta\dx\,,
\end{align*}
with $\Fe(t+h,x)=\frac{1}{g(\psi(x))}\nabla_{\by}\bvD(t+h,\by(x))\nabla\by(x)$. 
Using \eqref{equ:BDC_1}, we find
that the integrand on the right-hand side converges to
\[
\begin{aligned}
&\partial_{\bA}\Wel(\Fe(t,x)):\partial_t\bF_e(t,x)
\\
&\qquad
=\partial_{\bA}\Wel(\Fe(t,x))\Fe(t,x)^T:\partial_t\nabla_{\by}\bvD(t,\by(x)))(\nabla_{\by}\bvD(t,\by(x)))^{-1}
\end{aligned}
\]
as $h\to0$.
From \eqref{equ:stress_contral_KH2} and \ref{Ass7}, we further obtain an integrable upper bound such that
by dominated convergence we conclude the existence of $\partial_t\feel$
as in \eqref{equ:time_deriv} and estimate  \eqref{00-equ:est_time_deriv} for $\calF$ replaced with $\feel$.

For the phase-field energy, we can proceed in the very same way to determine
the time derivative $\partial_t\fepf$. 
From the formula~\eqref{eq:Wpfder.F}
we then conclude
\[
\begin{aligned}
|\partial_t\fepf(t,\by,\psi)|
&\leq b\int_\Omega \big| \partial_t\bF \bF^{-1}| \,|\bF^{-T}\nabla\psi|^2\,\dx
\\
&\leq \norm{\partial_t\nabla_{\by}\bvD(t,\by)(\nabla_{\by}\bvD(t,\by))^{-1}}_{L^\infty(\Omega)} \fepf(t,\by,\psi).
\end{aligned}
\]
Due to the bounds in~\ref{Ass7},
this yields estimate~\eqref{00-equ:est_time_deriv} for $\calF$ replaced with $\fepf$.  

We next calculate the time derivative of the hyperelasticity term $\fehy$ using the assumption~\ref{Ass2}.
For $t$ and $h$ as above, the difference quotient is given as 
\begin{align*}
    \frac{1}{h}\left(\fehy(t+h,\by)-\fehy(t,\by)\right)
    & = \int_{\Omega} \frac{1}{h}\int_0^h \!
    \partial_{\bG}\Why(\nabla\bF(t+\theta,x) )\tdots\partial_t(\nabla\bF(t+\theta,x))\dtheta\dx\,,
\end{align*}
with $\nabla\bF(t,x)=\nabla^2\bv(t,x)$ calculated in \eqref{equ:componentwise-zweite}. 
Clearly, the integrand converges to
\[
    \partial_{\bG}\Why(\nabla\bF(t,x) )\tdots\partial_t(\nabla\bF(t,x))
\]
as $h\to 0$,
due to the regularity of $\bvD$ from \eqref{equ:BDC_1}.
In order to perform the limit passage also in the spatial integral,
we need an integrable upper bound. 
For this purpose, we derive from~\eqref{equ:componentwise-zweite} that 
\[
\begin{split}
    \partial_t\left(\nabla\bF(t+\theta,\cdot)\right)_{ijk}
    & = {(\nabla\by_j)}^T{\partial_t(\nabla_{\by\by}^2\bv_{\D_i}(t+\theta,\by))}{(\nabla\by_k)}
    + \partial_t(\nabla_{\by}\bv_{\D_i}(t+\theta,\by))^T{(\nabla_{jk}^2\by)}\,.
\end{split}
\]
Due to the assumptions~\ref{Ass2} and~\ref{Ass7}, this implies
\[
\begin{aligned}
    \big|\partial_{\bG}\Why(\nabla\bF(t+\theta,x) )&\tdots\partial_t(\nabla\bF(t+\theta,x))\big|
    \leq  C(1+|\nabla\bF(t+\theta,x)|^{\beta-1}) (|\nabla\by|^2 + |\nabla^2\by|)
    \\
    &\leq C (|\nabla\by(x)|^2 + |\nabla^2\by(x)| + |\nabla\by(x)|^{2\beta} + |\nabla^2\by(x)|^{\beta})\,,
    \\
    &\leq C (1+ |\nabla\by(x)|^{2\beta} + |\nabla^2\by(x)|^{\beta})\,,
\end{aligned}
\]
which is an integrable bound
since $\by\in\bfY\subset W^{2,\beta}(\Omega;\R^d)$ with $\beta>d$
implies $\nabla^2\by\in L^{\beta}(\Omega;\R^{\dxd\times d})$ and $\nabla\by\in L^{2\beta}(\Omega;\R^{\dxd})$ 
by Sobolev embeddings. 
Hence, we can apply dominated convergence to conclude time differentiability of $\fehy$
with
$\partial_t\fehy$
given in \eqref{equ:time_deriv}. 
Using the previous estimate, we further find that
\[
|\partial_t\fehy(t,\by)|
    \leq \int_{\Omega}|\partial_{\bs{G}}\Why(\nabla\bF)\tdots\partial_t(\nabla\bF)|\dx
    \leq C\int_{\Omega} 1+ |\nabla\by(x)|^{2\beta} + |\nabla^2\by(x)|^{\beta}\dx.
\]
Since $2\beta\leq p$, Young's inequality implies 
\[
\|\nabla\by\|^{2\beta}_{L^{2\beta}(\Omega;\R^{\dxd})}
\leq \|\nabla\by\|^{2\beta}_{L^{p}(\Omega;\R^{\dxd})}
\leq C(1+ \|\nabla\by\|^{p}_{L^{p}(\Omega;\R^{\dxd})}),
\]
and with Lemma~\ref{lemma:estvy} we obtain
\[
\begin{aligned}
    |\partial_t\fehy(t,\by)|
    \leq C\big(1+ \|\nabla\by\|^{p}_{L^{p}(\Omega)}
    +\|\nabla\bF\|_{L^{\beta}(\Omega)}^{\beta}\big)
    \leq C(1+\feel(t,\by,\psi)+\fehy(t,\by))\,.
\end{aligned}
\]
In total, we conclude the full estimate \eqref{00-equ:est_time_deriv} for $t\in [0,T]$.

By Gronwall's inequality applied to the functions $t\mapsto\pm\left(\calF(t,\by,\psi)+c_0\right)$
we further conclude  
\begin{align}\label{equ:it_follows}
    |\calF(t,\by,\psi)- \calF(s,\by,\psi)|
    \leq \left(\exp{(c_1|t-s|)}-1\right)(\calF(t,\by,\psi)+c_0)
\end{align}
for all $s,t\in [0,T]$,
and the Lipschitz estimate \eqref{equ:Lip_estimate}
now follows from~\eqref{equ:it_follows} and the elementary estimate 
$e^{\tau}-1\leq \tau e^\tau$ for $\tau\in\R$
since $\calF(t,\by,\psi)\leq F$ implies
\[
   |\calF(t,\by,\psi)- \calF(s,\by,\psi)|
   \leq \left(\exp{(c_1|t-s|)}-1\right)(F+c_0)
   \leq c_1|t-s| \,e^{c_1 T}(F+c_0)\,.
\]

It remains to verify the asserted uniform continuity of $\partial_t\calF(\cdot,\by,\psi)$ on sublevel sets. 
We introduce 
\[
\begin{aligned}
    \bF_{{\mathrm e}}(t)&=\nabla \bvD(t,\by)\nabla \by  {\Fp^{-1}} 
    \,,\\
    \bL(t) &=(\nabla \partial_t \bvD(t,\by))[\nabla \bvD(t,\by)]^{-1}\,,\\
    \bK(\bA)&=\partial_{\bA}\Wel(\bA)\bA^T\,.
\end{aligned}
\]
Then $\partial_t\feel$ can be expressed as 
\[
    \partial_t\feel(t,\by,\psi) 
    = \int_{\Omega} \bK(\bF_{{\mathrm e}}(t))\Fp^{-1} :\bL(t)\dx\,.
\]
Let $\hat{\varepsilon}>0$ be arbitrary. 
From \eqref{equ:BDC_1} we deduce that $t\mapsto\nabla\bvD(t,\by)\in C(\overline{\Omega};\R^{\dxd})$ and $t\mapsto \bL(t)\in C(\overline{\Omega};\R^{\dxd})$ are uniformly continuous,
so that for any $\tilde\varepsilon>0$ 
there is $\tilde\delta>0$ such that 
$|t-s|<\tilde\delta$ implies
\begin{align*}
    |\bL(t,x)-\bL(s,x)| & \leq \tilde\varepsilon\,,\quad
    |\bF_{{\mathrm e}}(s,x)\bF_{\mathrm e}^{-1}(t,x)-\I|  \leq\tilde\varepsilon \,\,\text{for } x\in \Omega\,.
\end{align*}
Therefore, for any $\overline\varepsilon>0$,
we choose $\tilde\delta$ sufficiently small
such that
we can employ \eqref{equ:uniform_continuity_stresses} 
to ensure
\[
    |\bK(\bF_{{\mathrm e}}(t))-\bK(\bF_{\mathrm e}(s)) |
    =\big|\bK(\Fe(t)-\bK\big([\bF_{{\mathrm e}}(s)\bF_{{\mathrm e}}(t)^{-1}]\bF_{{\mathrm e}}(t)\big)\big|
    \leq\overline{\varepsilon}\big(1+\Wel(\bF_{{\mathrm e}}(t))\big)
\]
for $|t-s|<\tilde\delta$.
Using assumption~\ref{Ass6} and~\eqref{equ:stress_contral_KH2}, we then have 
\begin{equation*}
\begin{aligned}
    |\partial_t\feel(t,\by,\psi)&-\partial_t\feel(s,\by,\psi)| 
    \\
    & \leq C\int_{\Omega}\left( |\bK(\bF_{{\mathrm e}}(t))-\bK(\bF_{{\mathrm e}}(s)) |\,|\bL(s)| 
    + |\bK(\bF_{{\mathrm e}}(t))|\, |\bL(t)-\bL(s)|\right)\dx \\
    & \leq C\int_{\Omega}\overline{\varepsilon}\big(1+\Wel(\Fe(t))\big)
    + \tilde{\varepsilon}\big(c+\Wel(\Fe(t))\big)\dx \\
    &\leq C \left(\overline{\varepsilon}+\tilde{\varepsilon}\right)
    (1+\feel(t,\by,\psi))
    \leq \Hat{\varepsilon}\,
\end{aligned}
\end{equation*}
for $|t-s|<\tilde\delta$ 
if $\overline\varepsilon,\,\tilde\varepsilon>0$ are sufficiently small.
For the time derivative of $\fepf$ and $\fehy$, 
we proceed similarly, also using~\eqref{equ:prop-R_22} and~\eqref{stress_c_reg}.
In total we obtain~\eqref{eq:unifcont.est}, 
which finishes the proof.
\end{proof}

From previous results, we conclude the following convergence property 
of $\partial_t\calF$.

\begin{lemma} \label{lem:Fg.conv}
Let $\alpha^{-1}\gamma\norm{\nabla^2\bvD}_{L^\infty((0,T)\times\Omega)}^\beta<\delta_0$
with $\delta_0$ as in Lemma~\ref{lemma:Coercive_LowerSemi}.
For each $t\in[0,T]$ and all sequences $\psi_m\rightharpoonup\psi$ in $\bfZ$, $\by_m\rightharpoonup \by$ in $\bfY$
such that $\calF(t,\by_m,\psi_m)\to\calF(t,\by,\psi)$,
 we have
\[
\partial_t\calF(t,\by_m,\psi_m)\to \partial_t\calF(t,\by,\psi)
\quad\text{as }m\to\infty\,.
\]
\end{lemma}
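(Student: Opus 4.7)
The strategy is to split $\calF = \feel + \fepf + \fehy$ and $\partial_t\calF$ into the three corresponding parts, and to show that each of the three energy parts converges separately along the sequence; this extra information is exactly what will allow the passage to the limit in the three integrals of~\eqref{equ:time_deriv}. First, since $\beta>d$, the compact embedding $W^{2,\beta}(\Omega)\Subset C^1(\overline\Omega)$ combined with~\ref{Ass7} provides uniform convergence on $\overline\Omega$ of $\nabla\by_m$, of $\bF_m=\nabla_\by\bvD(t,\by_m)\nabla\by_m$, and of $\nabla_\by\bvD(t,\by_m)$ and $\partial_t\nabla_\by\bvD(t,\by_m)$. Continuity of $\det\nabla\by$ and its pointwise positivity on $\overline\Omega$, combined with the uniform convergence, yield a uniform positive lower bound on $\det\bF_m$, so $\bF_m^{-1}$ and $\bL_m:=\partial_t\nabla_\by\bvD(t,\by_m)\bigl(\nabla_\by\bvD(t,\by_m)\bigr)^{-1}$ converge uniformly as well. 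Passing to a subsequence, Rellich gives $\psi_m\to\psi$ a.e., hence $g(\psi_m)\to g(\psi)$ a.e.\ by~\ref{Ass6}, so $\Fe_m\to\Fe$ a.e.\ and $(\Fe_m)$ is uniformly bounded in $L^\infty(\Omega)$.

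The key step is to show that each of $\feel$, $\fepf$, $\fehy$ is separately weakly lower semicontinuous along the sequence: $\feel$ by Fatou from $\Wel(\Fe_m)\geq 0$ and a.e.\ convergence; $\fepf$ since its double-well part is continuous in the $L^4$-topology and its Korteweg part is weakly $L^2$-lower semicontinuous (using uniform convergence of $\bF_m^{-T}$ together with the weak $H^1$-convergence of $\psi_m$); and $\fehy$ by convexity of $\Why$ and the weak $L^\beta$-convergence $\nabla\bF_m\rightharpoonup\nabla\bF$ obtained from Lemma~\ref{lemma:conv_gradient_strong}. Since $\calF(t,\by_m,\psi_m)\to\calF(t,\by,\psi)$, a Fatou-equality argument forces each part to converge to its limit individually. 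From these convergences I upgrade the weak ones. The convergence $\int\Wel(\Fe_m)\dx\to\int\Wel(\Fe)\dx$ together with a.e.\ convergence and nonnegativity gives $\Wel(\Fe_m)\to\Wel(\Fe)$ in $L^1(\Omega)$, which via~\eqref{equ:stress_contral_KH2} yields equi-integrability and hence $L^1$-strong convergence of $\partial_\bA\Wel(\Fe_m)\Fe_m^T$ by Vitali. The identity $\int|\bF_m^{-T}\nabla\psi_m|^2\dx\to\int|\bF^{-T}\nabla\psi|^2\dx$ combined with the weak $L^2$-convergence of $\bF_m^{-T}\nabla\psi_m$ produces strong $L^2$-convergence of $\bF_m^{-T}\nabla\psi_m$, hence of $\nabla\psi_m$ via uniform convergence of $\bF_m$. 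Finally, $\int\Why(\nabla\bF_m)\dx\to\int\Why(\nabla\bF)\dx$ with $\nabla\bF_m\rightharpoonup\nabla\bF$ in $L^\beta$, strict convexity of $\Why$ and the coercivity~\eqref{equ:prop-R_2} put us in the setting of a Visintin-type theorem, which gives strong $L^\beta$-convergence $\nabla\bF_m\to\nabla\bF$; inverting~\eqref{eq:grad2y} transfers this to strong $L^\beta$-convergence of $\nabla^2\by_m$, and then~\eqref{equ:componentwise-zweite} yields $\partial_t\nabla\bF_m\to\partial_t\nabla\bF$ strongly in $L^\beta$.

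With these strong convergences in hand, the limit passage in the three terms of~\eqref{equ:time_deriv} becomes a pairing of a strongly convergent factor with a uniformly convergent (or dually strong) one. For $\partial_t\feel$, the $L^1$-strong convergence of $\partial_\bA\Wel(\Fe_m)\Fe_m^T$ is paired with the uniform convergence of $\bL_m$. For $\partial_t\fepf$, the strong $L^2$-convergence of $\nabla\psi_m$ combined with uniform convergence of $\bF_m^{-1}$, $\bF_m^{-T}$ and $\partial_t\bF_m$ makes~\eqref{eq:Wpfder.F} pass to the limit. For $\partial_t\fehy$, a.e.\ convergence of $\nabla\bF_m$, the growth~\eqref{equ:prop-R_22} and equi-integrability of $|\nabla\bF_m|^\beta$ yield $\partial_\bG\Why(\nabla\bF_m)\to\partial_\bG\Why(\nabla\bF)$ in $L^{\beta/(\beta-1)}$ via Vitali, and this pairs with the strong $L^\beta$-convergence of $\partial_t\nabla\bF_m$. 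The main obstacle is precisely the weak-to-strong upgrade for $\nabla\bF_m$ in $L^\beta$: only weak $L^\beta$-convergence is directly available from $\by_m\rightharpoonup\by$ in $W^{2,\beta}$, and the required strong $L^\beta$-convergence genuinely relies on the strict convexity of $\Why$ via the Visintin argument, whereas the remaining strong convergences follow either from $C^1$-compactness or from Hilbert-space reasoning. A standard subsequence-of-subsequence argument transfers the conclusion from the extracted subsequence to the full sequence.
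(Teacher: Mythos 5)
Your proof is correct, but it takes a genuinely different route from the paper. The paper proves this lemma in one line by invoking the abstract \cite[Prop.~3.3]{francfort2006existence}, which says (roughly) that weak lower semicontinuity of $\calF(t,\cdot,\cdot)$ together with the uniform continuity of $\partial_t\calF(\cdot,\by,\psi)$ on sublevel sets (the estimate~\eqref{eq:unifcont.est}, painstakingly established in Lemma~\ref{OO_lemma:derivative_estimate} using~\eqref{equ:uniform_continuity_stresses} and~\eqref{stress_c_reg}) already implies conditional continuity of $\partial_t\calF$; the argument there is a two-sided bound on the difference quotient $h^{-1}(\calF(t+h,\by_m,\psi_m)-\calF(t,\by_m,\psi_m))$ using the lower semicontinuity at $t\pm h$. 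You instead re-derive the conclusion from scratch by splitting the energy into $\feel+\fepf+\fehy$, noting that each part is separately weakly l.s.c.\ along the sequence, and arguing (via superadditivity of $\liminf$) that total energy convergence forces convergence of each summand individually. You then upgrade the weak convergences to strong ones — $\partial_\bA\Wel(\Fe_m)\Fe_m^T$ in $L^1$ via Scheff\'e and Vitali, $\nabla\psi_m$ in $L^2$ via the Hilbert-space weak-plus-norm argument, $\nabla\bF_m$ and hence $\nabla^2\by_m$ and $\partial_t\nabla\bF_m$ in $L^\beta$ via the Visintin strict-convexity argument — and pass to the limit term by term in the formulas~\eqref{equ:time_deriv}. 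The paper's route buys brevity and reuse of Lemma~\ref{OO_lemma:derivative_estimate}; your route buys self-containment and, notably, avoids the uniform-continuity hypotheses~\eqref{equ:uniform_continuity_stresses} and~\eqref{stress_c_reg} entirely for this lemma, though it essentially re-develops (ahead of time) the Visintin-type machinery the paper later deploys in Lemma~\ref{lemma:interpol_conv} to obtain~\eqref{o-special-1} and~\eqref{o-special-2}. One small remark: since $\Fe_m$ is uniformly bounded in $L^\infty$ with $\det\Fe_m$ uniformly bounded below (by Healey--Kr\"omer applied to the bounded energies), both $\Wel(\Fe_m)$ and $\partial_\bA\Wel(\Fe_m)\Fe_m^T$ are already uniformly bounded pointwise, so dominated convergence handles the elastic part more directly than Scheff\'e/Vitali; your argument is nevertheless correct.
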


\begin{proof}
By using \cite[Prop.~3.3]{francfort2006existence}, 
the statement is a direct consequence of the lower semicontinuity of
$\mathcal F(t,\cdot,\cdot)$ shown in Lemma~\ref{lemma:Coercive_LowerSemi}
and the uniform continuity of $\partial_t\calF$ on sublevel sets
as shown in Lemma~\ref{OO_lemma:derivative_estimate}.
\end{proof}

We next show the Gateaux differentiability of 
the free energy. 
Since we are interested in the 
derivative with respect to the displacement $\bv$ rather than 
the auxiliary variable $\by$,
we introduce the free energy as a function of $(\bv,\psi)$, namely 
\begin{equation}\label{eq:calEg}
\begin{aligned}
&\calE\colon W^{2,\beta}(\Omega;\R^d)\times H^1(\Omega)\to\R\cup\{\infty\},
\\
&\calE(\bv,\psi)
= \intO\Wel(\tfrac{1}{g(\psi)}\nabla\bv) +\Wpf(\psi,\nabla\psi.\bF)
+\Why(\nabla^2\bv) \dx\,. 
\end{aligned}
\end{equation}
Then it is clear that $\calE(\bv,\psi)=\fe(t,\by,\psi)$
if $\bv=\bvD(t,\by)$.
To obtain simple formulas,
we again use the notation $\partial_{\bF}\Wel(\Fe)$,
which denotes the derivative with respect to the full deformation gradient $\bF=\nabla\bv$, in contrast to $\partial_{\bA}\Wel(\Fe)$, which is the derivative of $\Wel$ with respect to the entire argument $\Fe=\bF\Fp^{-1}(\psi)$, see also Remark~\ref{rem:dAdFWel}.

\begin{lemma}\label{lem:Gateaux}
Let $\bv\in\bfU(t)$ and $t\in[0,T]$,
and let $\psi\in\bfZ$.
Then $\calE$ is Gateaux-differentiable in $(\bv,\psi)$, 
and for $(\bw,\zeta)\in W^{2,\beta}_{\DBC}(\Omega;\R^d)\times H^1(\Omega)$ 
it holds
\begin{align}\label{O-def-notation-derivatives}
\begin{split}
    \langle\D_{\bv}\calE(\bv,\psi), \bw \rangle 
    & = \intO \big(\partial_{\bF}\Wel(\Fe)+\partial_{\bF}\Wpf(\psi,\nabla\psi,\bF)\big):\nabla\bw +  \partial_{\bG}\Why(\nabla\bF)\tdots\nabla^2\bw\dx\,,
    \\
    \langle\D_{\psi}\calE(\bv,\psi), \zeta \rangle
    & = \intO \partial_{\psi}W(\bF,\nabla\bFg,\psi,\nabla\psi)\zeta + \partial_{\nabla\psi}\Wpf(\psi,\nabla\psi,\bF)\cdot\nabla\zeta\dx\,,
\end{split}
\end{align}
where more explicit formulas for 
$\partial_{\bF}\Wel$, $\partial_{\bF}\Wpf$, $\partial_{\psi}W$ and $\partial_{\nabla\psi}\Wpf$ 
are given in~\eqref{eq:Wder.F}, \eqref{eq:Wpfder.F}, \eqref{eq:Wder.psi} and \eqref{eq:Wder.dpsi}.
\end{lemma}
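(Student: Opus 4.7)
The plan is to compute the two partial Gateaux derivatives separately, in each case differentiating the integrand pointwise and justifying the exchange of limit and integral via Lebesgue's dominated convergence theorem. For the $\bv$-direction I would fix an admissible perturbation $\bw\in W^{2,\beta}_{\DBC}(\Omega;\R^d)$ and differentiate $\tau\mapsto\calE(\bv+\tau\bw,\psi)$ at $\tau=0$; for the $\psi$-direction I would fix $\zeta\in H^1(\Omega)$ and differentiate $\tau\mapsto\calE(\bv,\psi+\tau\zeta)$.

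For admissibility of perturbations in $\bv$, I would first observe that since $\bv\in\bfU(t)\subset W^{2,\beta}(\Omega;\R^d)$ with $\beta>d$, the Sobolev embedding yields $\bv\in\mathrm{C}^1(\overline\Omega;\R^d)$; hence $\det(\nabla\bv)$ is continuous and strictly positive on the compact set $\overline\Omega$, so bounded below by some $\eta>0$. The same embedding gives $\|\nabla(\bv+\tau\bw)-\nabla\bv\|_{L^\infty(\Omega)}=O(|\tau|)$, so $\det(\nabla(\bv+\tau\bw))\geq\eta/2$ pointwise for $|\tau|$ small. Together with~\ref{Ass6} this keeps $\tfrac{1}{g(\psi)}\nabla(\bv+\tau\bw)$ inside $\mathrm{GL}_+(d)$, so $\Wel$ composed with it is classically differentiable in $\tau$; the pointwise chain-rule limit is $\tfrac{1}{g(\psi)}\partial_{\bA}\Wel(\Fe):\nabla\bw=\partial_{\bF}\Wel(\Fe):\nabla\bw$, and the stress-control bound~\eqref{equ:stress_contral_KH2} together with $\Wel(\Fe)\in L^1(\Omega)$ will supply the integrable majorant. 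For the hyperstress term I would invoke~\eqref{equ:prop-R_22} to bound the difference quotient by $C(1+|\nabla^2\bv|^{\beta-1}+|\nabla^2\bw|^{\beta-1})|\nabla^2\bw|$, which is in $L^1(\Omega)$ by H\"older. For the $\bF$-dependence of $\Wpf$ only the Korteweg term contributes; differentiating $\bF^{-T}$ in the direction $\nabla\bw$ to obtain $-\bF^{-T}(\nabla\bw)^T\bF^{-T}$ and using the uniform bound on $\bF^{-1}$ together with $\nabla\psi\in L^2(\Omega)$ will produce formula~\eqref{eq:Wpfder.F}.

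For the $\psi$-direction, the global bounds $0<\underline g\leq g\leq\overline g$ and the boundedness of $g'$ from~\ref{Ass6} ensure that $\tfrac{1}{g(\psi+\tau\zeta)}\nabla\bv\in\mathrm{GL}_+(d)$ for every $\tau\in\R$, with no smallness of $\tau$ required. The chain rule together with Remark~\ref{rem:dAdFWel} gives the pointwise derivative of $\Wel$ as $-\partial_{\bA}\Wel(\Fe)\bF^T\cdot g'(\psi)/g(\psi)^2\cdot\zeta$, and the stress-control bound again yields a dominating function. The double-well part of $\Wpf$ differentiates to $a(\psi^3-\psi)\zeta$ and the Korteweg part produces the $\nabla\zeta$-contribution $b\bF^{-1}\bF^{-T}\nabla\psi\cdot\nabla\zeta$, both with $L^1(\Omega)$ majorants thanks to the Sobolev embedding $H^1(\Omega)\hookrightarrow L^6(\Omega)$ (for $d\in\{2,3\}$) and the uniform positive lower bound on $\det\bF$. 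Collecting the contributions yields the stated formulas for $\D_{\bv}\calE$ and $\D_{\psi}\calE$.

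The main technical obstacle I expect is securing the integrable majorant for the $\Wel$-contribution, because $\Wel$ may blow up on the boundary of $\mathrm{GL}_+(d)$ and no polynomial growth of its derivative is assumed. This is precisely what the stress-control hypothesis~\eqref{equ:stress_contral_KH2} is designed for: it converts the a priori uncontrolled quantity $\partial_{\bA}\Wel(\bA)\bA^T$ into a pointwise bound by $1+\Wel(\bA)$, which is integrable because $\calE(\bv,\psi)<\infty$. Once this domination is in hand, the remaining limit passages reduce to routine applications of dominated convergence.
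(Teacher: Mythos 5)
Your overall strategy (pointwise difference quotients plus dominated convergence, carefully checking that the perturbed elastic strain stays inside $\mathrm{GL}_+(d)$) is the same as the paper's. The one place you deviate is in how you produce the integrable majorant for the $\Wel$ contribution: you reach for the stress-control bound~\eqref{equ:stress_contral_KH2}, and correspondingly flag this as the main technical obstacle, worrying that $\Wel$ has no assumed polynomial growth of its derivative. But this worry is vacuous here, and the paper avoids it entirely. The very facts you already established --- $\|\nabla(\bv+\tau\bw)\|_{L^\infty}$ uniformly bounded and $\det\nabla(\bv+\tau\bw)\geq\eta/2$ uniformly for small $\tau$, together with $0<\underline g\le g\le\overline g$ --- mean that $\tfrac{1}{g(\psi)}(\nabla\bv+\tau\nabla\bw)(x)$ ranges over a fixed compact subset $K\Subset\mathrm{GL}_+(d)$ as $x\in\overline\Omega$ and $\tau$ varies in a small interval. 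Since $\Wel\in C^2(\mathrm{GL}_+(d))$, the continuous map $\partial_{\bA}\Wel$ is simply \emph{bounded} on $K$, and a constant majorant suffices; no growth hypothesis is needed, and no finiteness of $\int\Wel(\Fe^\tau)\,dx$ has to be invoked. (Your route via~\eqref{equ:stress_contral_KH2} can be made to work --- factoring $\nabla\bw/g=\bL\,\Fe^\tau$ with $\bL=\nabla\bw(\nabla\bv+\tau\nabla\bw)^{-1}$ bounded --- but to then know $\Wel(\Fe^\tau)$ is uniformly integrable in $\tau$ you would again need the same compactness observation, so it is a detour, not a shortcut.) The rest of your argument, including the treatment of the hyperstress term via~\eqref{equ:prop-R_22} and H\"older, and of the $\psi$-direction via boundedness of $g$, $g'$, $1/g$, $\nabla\bv$ and $(\det\nabla\bv)^{-1}$ together with the Sobolev embedding of $H^1(\Omega)$ into $L^4(\Omega)$ (the paper uses $L^4$; your $L^6$ also works for $d\in\{2,3\}$), matches the paper's proof.
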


\begin{proof}
For $\bv\in\bfU(t)$, $\psi\in\bfZ$ and $\bw\in W^{2,\beta}_{\DBC}(\Omega;\R^d)$
we first consider the difference quotient
\[
\begin{aligned}
\frac{1}{h}&\big( \calE(\bv+h\bw,\psi)-\calE(\bv,\psi)\big)
\\
&=\int_\Omega \frac{1}{h}\int_0^h \partial_{\bA}\Wel\big(\tfrac{1}{g(\psi)}(\nabla\bv+\theta\nabla\bw)\big):\nabla\bw\tfrac{1}{g(\psi)} + \partial_{\bG}\Why(\nabla^2\bv+\theta\nabla^2\bw)\tdots\nabla^2\bw\dtheta\dx
\end{aligned}
\]
for $|h|>0$ small.
Due to Sobolov embeddings, 
we have $\bv,\bw\in C^{1}(\overline{\Omega})$,
so that $\nabla\bv$ and $\det(\nabla\bv)^{-1}$
as well as $\nabla\bv+\theta\nabla\bw$ and $\det(\nabla\bv+\theta\nabla\bw)^{-1}$
are uniformly bounded 
for $\theta$ sufficiently small.
Moreover, the bound~\eqref{equ:prop-R_22} of $\partial_{\bG} \Why$ yields
a uniform bound for 
$\partial_{\bG}R(\nabla^2\bv+\theta\nabla^2\bw)\tdots\nabla^2\bw$.
Therefore, we pass to the limit $h\to0$ by dominated convergence
and obtain the asserted derivative $\D_{\bv}\calE$.

Now let $\zeta\in H^1(\Omega)$ and $h>0$, 
and consider the difference quotient
\[
\begin{aligned}
&\frac{1}{h}\big(  \calE(\bv,\psi+h\zeta)-\calE(\bv,\psi)\big)
\\
&\ =\int_\Omega \frac{1}{h}\int_0^h \partial_{\psi}\Wel\big(\tfrac{1}{g(\psi+\theta\zeta)}\nabla\bv\big):\nabla\bv\tfrac{g'(\psi+\theta\zeta)}{g(\psi+\theta\zeta)^2}\zeta
+ \partial_\psi\Wpf(\psi+\theta\zeta,\nabla\psi+\theta\nabla\zeta,\nabla \bv)\zeta
\\
&\qquad\qquad\qquad\qquad\qquad\qquad\qquad\qquad\quad
+ \partial_{\nabla\psi}\Wpf(\psi+\theta\zeta,\nabla\psi+\theta\nabla\zeta,\nabla \bv)\cdot\nabla\zeta\dtheta\dx.
\end{aligned}
\]
For the first term, we can derive an integrable bound 
since $\nabla \bv$ and $\frac{1}{g}$ and $g'$ 
are bounded due to Sobolev embeddings and assumption~\ref{Ass6}.
An integrable bound for the second term can be derived from
the Sobolev embedding $H^1(\Omega)\hookrightarrow L^4(\Omega)$
and the definition of $\Wpf$ in \eqref{def-W_double}.
We can thus pass to the limit $h\to0$ by dominated convergence,
which yields the claimed formula for $\D_\psi\calE$.
\end{proof}

\section{The time-discretized system}
\label{sec:timediscrete}

Using an incremental minimization scheme, we derive solutions to a time discretization of the weak formulation of \eqref{equ:pde}. We show the existence of these minimizers by the direct method of calculus of variations.
Subsequently, we show regularity properties of the time interpolants associated to the discrete solutions,
and we derive uniform bounds.
For the whole section, we assume that the assumptions~\ref{Ass0}--\ref{Ass7}
are satisfied.

We consider an equidistant partition $0=t^0_M<t^1_M<\dots<t^M_M=T$ of the time interval $[0,T]$ with time step size $\tau_M=\tfrac{T}{M}$, where $M\in\N$ controls the discretization fineness, such that
\[
    t_M^m=m\tau_M=\frac{mT}{M}, \quad m\in\{0,1,\dots,M\}\,.
\]
We let $(\by^0,\psi^0)\in\bfY\times\bfZ$ be an initial datum.
To obtain the time-discrete approximate solutions,
for each $m\in\{1,\dots,M\}$ we iteratively solve the minimization problem
\begin{equation}\label{equ:incremental} 
    (\by_M^m,\psi_M^m)\in\underset{(\by,\psi)\in\bfY\times\bfZ_{\psi^0}}{\mathrm{argmin}}
    \mathbb{F}_M^m(\by,\psi)
\end{equation}    
for the nonlinear functional
\begin{equation}\label{funcitonal}
        \mathbb{F}_M^m(\by,\psi) = \fe(t_M^m,\by,\psi) + \tfrac{1}{2\tau_M}\|\psi-\psi_M^{m-1}\|^2_{\Tilde{V}_0} 
        + \tau_M \calV\left(t_M^m,\by^{m-1}_M,\tfrac{\by-\by^{m-1}_M}{\tau_M},\psi_M^{m-1}\right),
\end{equation}
where we set $\bfZ_{\psi^0}:=\{\phi\in H^1(\Omega)\,\vert\, \int_{\Omega}(\phi-\psi^0)\dx=0 \}$. 
The space $\Tilde{V}_0$ and the corresponding norm were defined in~\eqref{def-V0_H-1GF}.
We first show existence of these minimizers
under the smallness condition from Lemma~\ref{lemma:Coercive_LowerSemi}.

\begin{lemma}\label{lemma:time_disrete_sol}
Let $(\by^0,\psi^0)\in\bfY\times\bfZ$.
Then there is $\delta_0>0$ such that if
the smallness condition~\eqref{eq:smallness} holds, 
then for every $M\in\N$ and $m\in\{1,\dots,M\}$ there is a minimizer $(\by_M^m,\psi_M^m)\in {\bfY}\times\bfZ_{\psi^0}$ of
$\mathbb{F}_M^m$ defined in~\eqref{funcitonal}.
\end{lemma}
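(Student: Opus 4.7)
The plan is to apply the direct method of calculus of variations inductively in $m$. Assume $(\by^{m-1}_M,\psi^{m-1}_M)\in\bfY\times\bfZ_{\psi^0}$ has been constructed (starting from the given initial datum for $m=1$). The infimum of $\mathbb{F}_M^m$ over $\bfY\times\bfZ_{\psi^0}$ is finite since $(\by^{m-1}_M,\psi^{m-1}_M)$ is itself an admissible competitor: the viscous and $\Tilde{V}_0$-penalty terms vanish at this choice, and $\fe(t_M^m,\by^{m-1}_M,\psi^{m-1}_M)<\infty$ by Lemma~\ref{OO_lemma:derivative_estimate}. Fix a minimizing sequence $(\by_k,\psi_k)_k\subset\bfY\times\bfZ_{\psi^0}$.

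For coercivity I exploit that the viscous dissipation $\calV$ and the $\Tilde{V}_0$-penalty are nonnegative, so $\mathbb{F}_M^m(\by_k,\psi_k)\geq\fe(t_M^m,\by_k,\psi_k)$. Under the smallness assumption~\eqref{eq:smallness}, Lemma~\ref{lemma:Coercive_LowerSemi} then provides uniform bounds on $\|\by_k\|_{W^{2,\beta}(\Omega)}$, $\|\psi_k\|_{H^1(\Omega)}$ and $\|\det(\nabla\by_k)^{-1}\|_{L^q(\Omega)}$. Passing to a (non-relabeled) subsequence, I obtain $\by_k\rightharpoonup\by_\infty$ in $W^{2,\beta}(\Omega;\R^d)$ and $\psi_k\rightharpoonup\psi_\infty$ in $H^1(\Omega)$. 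The mean-value condition defining $\bfZ_{\psi^0}$ is weakly closed in $H^1(\Omega)$, and since $\beta>d$ the compact embedding $W^{2,\beta}(\Omega)\Subset C^1(\overline{\Omega})$ gives $\det(\nabla\by_k)\to\det(\nabla\by_\infty)$ uniformly on $\overline{\Omega}$; combined with the uniform $L^q$-bound on $\det(\nabla\by_k)^{-1}$ and Fatou's lemma, this forces $\det(\nabla\by_\infty)>0$ a.e.\ in $\Omega$, so $\by_\infty\in\bfY$.

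It remains to pass to the limit inferior in each of the three terms of $\mathbb{F}_M^m$. The weak lower semicontinuity of $\fe(t_M^m,\cdot,\cdot)$ is exactly the second assertion of Lemma~\ref{lemma:Coercive_LowerSemi}. The $\Tilde{V}_0$-penalty is in fact continuous along the subsequence: since $\psi_k-\psi^{m-1}_M\in V_0$ for every $k$, the compact chain $V_0\Subset L^2(\Omega)\hookrightarrow\Tilde{V}_0$ turns weak $H^1$-convergence into strong $\Tilde{V}_0$-convergence. For the viscous contribution I use that $(\by^{m-1}_M,\psi^{m-1}_M,t_M^m)$ is fixed while $\by_k\to\by_\infty$ strongly in $C^1(\overline{\Omega})$, which via Lemma~\ref{lemma:conv_gradient_strong} implies uniform convergence of $\bF(t_M^m,\by_k)$ together with its inverse and determinant; combined with the quadratic structure~\eqref{eq:Vhat.quadratic} and the continuity of $\hat V$ and $\bD$ in assumption~\ref{Ass5}, this yields convergence, hence lower semicontinuity, of the viscous term. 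Summing the three contributions gives $\mathbb{F}_M^m(\by_\infty,\psi_\infty)\leq\liminf_k\mathbb{F}_M^m(\by_k,\psi_k)=\inf\mathbb{F}_M^m$, so $(\by_M^m,\psi_M^m):=(\by_\infty,\psi_\infty)$ is a minimizer. The main technical obstacle is preserving the open constraint $\det(\nabla\by)>0$ in the weak limit; this is precisely what the $L^q$-integrability of $\det(\nabla\by)^{-q}$ built into~\ref{Ass1} together with the compact Sobolev embedding $W^{2,\beta}\Subset C^1(\overline{\Omega})$ afforded by $\beta>d$ are designed to handle.
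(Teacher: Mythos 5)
Your proof is correct and follows the same route as the paper: apply the direct method using the coercivity and weak lower semicontinuity from Lemma~\ref{lemma:Coercive_LowerSemi}, and observe that the $\Tilde V_0$-penalty and viscous terms are handled by compactness and the quadratic structure in \ref{Ass5}. You supply considerably more detail than the paper's terse proof, including the compactness chain $V_0\Subset L^2(\Omega)\hookrightarrow\Tilde V_0$ for the penalty and the uniform $C^1$ convergence for the viscous term; both observations are sound.

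One imprecision worth fixing: after Fatou you conclude $\det(\nabla\by_\infty)>0$ \emph{a.e.}, and then write ``so $\by_\infty\in\bfY$,'' but $\bfY$ requires $\det(\nabla\by)>0$ \emph{everywhere} in $\Omega$. Almost-everywhere positivity plus $L^q$-integrability of the inverse does not by itself rule out isolated zeros of a continuous function. To close this, use that $\nabla\by_\infty\in C^{0,\lambda}(\overline\Omega)$ with $\lambda=1-d/\beta$ by the Sobolev embedding $W^{2,\beta}\hookrightarrow C^{1,\lambda}$, so $\det(\nabla\by_\infty)$ is $\lambda$-Hölder; if $\det(\nabla\by_\infty)(x_0)=0$ then $\det(\nabla\by_\infty)(x)\leq C|x-x_0|^{\lambda}$ near $x_0$ and hence $\det(\nabla\by_\infty)^{-q}\gtrsim|x-x_0|^{-q\lambda}$, which is not locally integrable because $q\lambda=q(\beta-d)/\beta\geq d$ by the requirement $q\geq\beta d/(\beta-d)$ in \ref{Ass0} — contradicting the $L^q$-bound. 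This is exactly the Healey--Krömer mechanism (Lemma~\ref{healy-kromer}), which actually gives a \emph{uniform} pointwise lower bound on the determinant. You gesture at this in your last sentence, but the step deserves to be spelled out since it is where the exponent condition in \ref{Ass0} is actually used; the paper buries the whole point inside the phrase ``weakly sequentially compact sublevel sets'' in Lemma~\ref{lemma:Coercive_LowerSemi}.
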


\begin{proof}[Proof of \Cref{lemma:time_disrete_sol}]
The set $\bfY\times\bfZ_{\psi_0}$ is a closed and convex subset of $W^{2,\beta}(\Omega;\R^d)\times H^1(\Omega)$.   
In \Cref{lemma:Coercive_LowerSemi} we showed that $\fe(t,\cdot,\cdot)$ has weakly sequentially compact sublevel sets 
if~\eqref{eq:smallness} holds for $\delta_0>0$ sufficiently small. 
The additional terms in \eqref{funcitonal} maintain these properties
due to their quadratic structure,
see also assumption~\ref{Ass5},
so that the direct method of calculus of variations yields 
the existence of minimizers of~\eqref{funcitonal}
for every $m\in\{1,\dots,M\}$ iteratively.
\end{proof}

\noindent
From the minimizers obtained in Lemma~\ref{lemma:time_disrete_sol}, we construct the piecewise constant interpolants $(\overline{\by}_M,\overline{\psi}_M)$, $(\underline{\by}_M,\underline{\psi}_M)$ 
and the piecewise linear interpolants $(\hat{\by}_M,\hat{\psi}_M)$
on $[0,T]$ by
\begin{subequations}\label{equ:all_interpolants}
    \begin{equation} 
    \begin{alignedat}{2}
    \overline{\psi}_M(t)           & = \psi_M^m && \text{for }t\in(t^M_{m-1},t_M^m]\,,\\
    \underline{\psi}_M (t)    & = \psi^{m-1}_M && \text{for }t\in[t^M_{m-1},t_M^m)\,,\\
    \overline{\by}_M(t)              & = \by_M^m  && \text{for }t\in(t^M_{m-1},t_M^m]\,,\\
    \underline{\by}_M (t)       & = \by^{m-1}_M  && \text{for }t\in[t^M_{m-1},t_M^m)\,, \label{def-Interpol-uy}\\
    \Hat{\psi}_M(t)     & = \frac{t-t_M^{m-1}}{\tau_M}\psi_M^m +\frac{t_M^m-t}{\tau_M}\psi^{m-1}_M \quad && \text{for }t\in(t^M_{m-1},t_M^m]\,,\\
    \Hat{\by}_M(t)     & = \frac{t-t_M^{m-1}}{\tau_M}\by_M^m +\frac{t_M^m-t}{\tau_M}\by^{m-1}_M \quad && \text{for }t\in(t^M_{m-1},t_M^m]\,,
    \end{alignedat}
    \end{equation} 
and the interpolant for the chemical potential is 
\begin{align}\label{equ:discret_mu}
    \mu_M(t)       & = -(-\Delta)^{-1}(\partial_t\Hat{\psi}_M(t)) + \lambda_M(t)\,,
\end{align}
where $\lambda_M(t)$ is a Lagrange multiplier 
originating from the conservation of mass
and determined by
\begin{align}\label{equ:lambda_M}
    \lambda_M(t) = \frac{1}{|\Omega|}\int_{\Omega}\partial_{\psi} \Wel(\nabla\overline{\bv}_M\Fp^{-1}(\overline{\psi}_M))+\partial_{\psi}\Wpf(\overline{\psi}_M, \nabla\overline{\psi}_M,\nabla\overline{\bv}_M)\dx\,.
\end{align}
\end{subequations}
Here we consider the inverse of the Laplace operator $-\Delta:V_0\to\tilde{V}_0$
introduced in~\eqref{equ:Inverse_laplace},
and since $\partial_t\Hat{\psi}_M(t)$ is mean-free by construction,
equation~\eqref{equ:discret_mu} yields a well-defined object 
$\mu_M(t)\in H^1(\Omega)$ for all $t\in[0,T]$.

We next derive the equations satisfied by the interpolants,
which correspond to Euler--Lagrange equations
for the solutions to the minimization problem~\eqref{equ:incremental}.
To simplify notation, we further introduce
\[
\begin{aligned}
    \overline{\bv}_M(t)&=\bvD(t,\overline{\by}_M(t)),  \qquad&
    \underline{\bv}_M(t)&=\bvD(t,\underline{\by}_M(t)), \qquad&
    \hat{\bv}_M(t)&=\bvD(t,\hat{\by}_M(t)),
    \\
    \overline{\bF}_M(t)&=\nabla\overline{\bv}_M(t), &
    \underline{\bF}_M(t)&=\nabla\underline{\bv}_M(t), &
    \hat{\bF}_M(t)&=\nabla\hat{\bv}_M(t),
\end{aligned}
\]
and $\overline{\bF}_{\mathrm e, M}(t)=\overline{\bF}_M(t)\bF_{\mathrm{p}}(\overline{\psi}(t))^{-1}$.

\begin{lemma}
\label{lem:weak_interp}
Let $M\in\N$
and let the interpolants \eqref{equ:all_interpolants}
be constructed from the solutions $(\by_M^m,\psi_M^m,\mu_M^m)_{m=1}^M$ to \eqref{equ:incremental}.
Then for all $t\in[0,T]$ and $\zeta\in H^1({\Omega})$, 
$\bw\in W^{2,\beta}_{\DBC}(\Omega;\R^d)$
the identities
    \begin{align}
    \int_{\Omega} \partial_t\Hat{\psi}_M(t)\zeta \dx   
         &= - \int_{\Omega}   \nabla \mu_M(t) \cdot \nabla\zeta \dx\,,\label{equ:weak_time} \\
    \begin{split}
        \int_{\Omega}  \mu_M(t) \zeta \dx   
        &= \int_{\Omega} \big[\partial_{\psi} W(\overline{\bF}_{M}(t),\nabla\overline{\bF}_{M}(t),\overline{\psi}_M(t),\nabla\overline{\psi}_M(t)) \zeta 
        \\
        &\qquad\qquad
        +\partial_{\nabla\psi} \Wpf(\overline{\psi}_M(t),\nabla\overline{\psi}_M(t),\overline{\bF}_M(t))\cdot\nabla\zeta \big]\dx\,, 
    \end{split}   
    \label{equ:weak_chem}
    \\
    \begin{split}
    0 = \int_{\Omega} \big[\partial_{\bF}\Wel&\left(\overline{\bF}_{\mathrm e, M}(t)\right):\nabla\bw  
    +\partial_{\bF}\Wpf\left(\overline{\psi}_M(t),\nabla\overline{\psi}_M(t),\nabla\overline{\bF}_M(t)\right):\nabla\bw
    \\
    +  \partial_{\bG}&\Why(\nabla\overline{\bF}_M(t))\tdots\nabla^2\bw
    + \partial_{\bs{\dot{F}}}V(\underline{\bF}_M(t), \partial_t{\hat{\bF}}_M(t), \underline{\psi}_M(t)) : \nabla\bw\big]\dx\,
    \end{split}
    \label{equ:weak_elast} 
\end{align}
hold,
and the energy-dissipation inequality 
\begin{align}\label{equ:weak_energy}
\begin{split}
    &\fe (t,\overline{\by}_M(t),\overline{\psi}_M(t)) 
    + \int_0^t\int_{\Omega} |\nabla\mu_M(s,x)|^2 \dx \ds 
    \\
    &\qquad\qquad
    + \int_0^t \intO V( \underline{\bF}_M(s,x),\partial_t{\hat{\bF}}_M(s,x),\underline{\psi}_M(s,x))\dx\ds
    \\
    &\qquad\qquad\qquad\qquad
    \leq \fe (0,{\by}^0,\psi^0) + \int_{0}^{t}\partial_t\fe (s,\underline{\by}_M(s),\underline{\psi}_M(s))\ds\, 
\end{split}
\end{align}
is valid.  
\end{lemma}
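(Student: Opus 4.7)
The plan is to derive equations~\eqref{equ:weak_time}--\eqref{equ:weak_elast} as Euler--Lagrange identities for the incremental minimization problem~\eqref{equ:incremental} and to obtain the energy-dissipation inequality~\eqref{equ:weak_energy} by telescoping the minimality condition across time steps and invoking the fundamental theorem of calculus from Lemma~\ref{OO_lemma:derivative_estimate}. Equation~\eqref{equ:weak_time} is essentially immediate: by definition~\eqref{equ:discret_mu} and the spatial constancy of $\lambda_M(t)$, we have $\nabla\mu_M=-\nabla(-\Delta)^{-1}(\partial_t\hat\psi_M)$, and testing against $\nabla\zeta$ reduces by the defining property of $(-\Delta)^{-1}$ on $\tilde V_0$ to $\int\partial_t\hat\psi_M\,\zeta\dx$.

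For~\eqref{equ:weak_chem}, I would take mean-free variations $\zeta\in V_0$, which preserve the mass constraint in $\bfZ_{\psi^0}$. Lemma~\ref{lem:Gateaux} produces the $\partial_\psi W$ and $\partial_{\nabla\psi}\Wpf$ contributions from $\fe$. Unfolding the scalar product~\eqref{equ:NEO_skp} and integrating by parts shows that the variation of $\tfrac{1}{2\tau_M}\|\psi-\psi_M^{m-1}\|^2_{\tilde V_0}$ at the minimizer equals $\int(-\Delta)^{-1}(\partial_t\hat\psi_M)\,\zeta\dx$, while $\tau_M\calV$ does not contribute since its last argument is frozen at $\psi_M^{m-1}$. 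Extending to arbitrary $\zeta\in H^1(\Omega)$ introduces a Lagrange multiplier $\lambda$ for the mass constraint; testing at $\zeta\equiv 1$ and using that $(-\Delta)^{-1}(\partial_t\hat\psi_M)\in V_0$ is mean-free forces $\lambda=\lambda_M(t)$ in view of~\eqref{equ:lambda_M}. Rearranging and inserting the definition of $\mu_M$ then yields~\eqref{equ:weak_chem}.

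For~\eqref{equ:weak_elast}, I would vary $\by_M^m$ in direction $\tilde\bw\in W^{2,\beta}_{\DBC}(\Omega;\R^d)$, which respects the Dirichlet constraint $\by=\id$ on $\DBC$. Writing $\fe(t,\by,\psi)=\calE(\bvD(t,\by),\psi)$ and applying the chain rule together with Lemma~\ref{lem:Gateaux} translates the $\by$-derivative of $\fe$ into a $\bv$-derivative paired with the variation $\bw=\nabla_\by\bvD(t_M^m,\by_M^m)\tilde\bw$. Assumption~\ref{Ass7} provides the regularity and pointwise invertibility of $\nabla_\by\bvD$ needed to ensure that the identity can be recast for arbitrary test functions $\bw\in W^{2,\beta}_{\DBC}(\Omega;\R^d)$. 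A parallel chain-rule computation for $\tau_M\calV$, combined with the piecewise constant and piecewise linear structure of the interpolants in~\eqref{equ:all_interpolants}, yields the viscous stress contribution $\int\partial_{\bs{\dot{F}}}V(\underline\bF_M,\partial_t\hat\bF_M,\underline\psi_M):\nabla\bw\dx$ appearing in~\eqref{equ:weak_elast}.

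For the energy-dissipation inequality~\eqref{equ:weak_energy}, I would start from the minimality $\mathbb F_M^m(\by_M^m,\psi_M^m)\le\mathbb F_M^m(\by_M^{m-1},\psi_M^{m-1})$. On the right-hand side the viscous term vanishes because $\hat V$ is quadratic in its second argument, so $V(\bF,0,\psi)=0$ whenever the velocity entry is zero. Using Lemma~\ref{OO_lemma:derivative_estimate} to rewrite $\fe(t_M^m,\by_M^{m-1},\psi_M^{m-1})=\fe(t_M^{m-1},\by_M^{m-1},\psi_M^{m-1})+\int_{t_M^{m-1}}^{t_M^m}\partial_t\fe(s,\underline\by_M(s),\underline\psi_M(s))\ds$ and summing over $m$ telescopes the free energy and reconstructs the power-of-external-forces integral. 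The discrete dissipation terms convert into the claimed time integrals via the identity $\|\nabla\mu_M(s)\|_{L^2}^2=\tau_M^{-2}\|\psi_M^m-\psi_M^{m-1}\|_{\tilde V_0}^2$ on each $(t_M^{m-1},t_M^m]$ together with the piecewise structure of $\underline\bF_M$, $\partial_t\hat\bF_M$, and $\underline\psi_M$. The most delicate step is~\eqref{equ:weak_elast}: the chain-rule translation between $\by$- and $\bv$-variations through the multiplicative structure $\bF=\nabla_\by\bvD(t,\by)\nabla\by$, together with the analogous treatment of the viscous term (whose dependence on $\bF$ and its discrete time derivative is intricate), is where Assumption~\ref{Ass7} and the regularity theory developed in Section~\ref{sec:preliminaries} play their key role.
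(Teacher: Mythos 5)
Your derivations of~\eqref{equ:weak_time}, \eqref{equ:weak_chem}, and \eqref{equ:weak_energy} match the paper's proof closely: \eqref{equ:weak_time} follows directly from~\eqref{equ:discret_mu} and the spatial constancy of $\lambda_M$; \eqref{equ:weak_chem} is obtained by testing with $\zeta_1\in V_0$ and a constant $\zeta_2$ separately (the paper does not literally introduce a Lagrange multiplier and solve for it, but your rephrasing comes to the same identity); and for~\eqref{equ:weak_energy} you correctly note that the viscous term at $\by=\by_M^{m-1}$ vanishes because $V(\cdot,0,\cdot)=0$, telescope, and invoke the time-integral representation from Lemma~\ref{OO_lemma:derivative_estimate}.

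Your treatment of~\eqref{equ:weak_elast}, however, has a genuine gap. You propose to vary $\by_M^m$ in direction $\tilde\bw$, apply the chain rule to $\fe(t,\by,\psi)=\calE(\bvD(t,\by),\psi)$, and then ``recast'' the resulting identity for arbitrary $\bw\in W^{2,\beta}_{\DBC}(\Omega;\R^d)$ using invertibility of $\nabla_\by\bvD$. Both steps need more regularity than~\ref{Ass7} supplies. To differentiate $s\mapsto\Why\bigl(\nabla^2\bvD(t,\by+s\tilde\bw)\bigr)$ you must differentiate $s\mapsto\nabla^2_{\by\by}\bvD(t,\by(x)+s\tilde\bw(x))$, which requires $\nabla^3_{\by\by\by}\bvD$; Assumption~\ref{Ass7} only guarantees $\bvD(t,\cdot)\in C^2$. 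The same obstruction reappears in the ``recasting'' step: the inverse map $\bw\mapsto\tilde\bw=(\nabla_\by\bvD(t,\by_M^m(\cdot)))^{-1}\bw$ takes $W^{2,\beta}_{\DBC}$ into $W^{2,\beta}_{\DBC}$ only if $x\mapsto(\nabla_\by\bvD(t,\by_M^m(x)))^{-1}$ has two spatial derivatives, which again needs $\bvD(t,\cdot)\in C^3$. The paper avoids this altogether: since $\by\mapsto\bvD(t,\by)$ is a homeomorphism $\bfY\to\bfU(t)$ (Lemma~\ref{lemma:estvy}) and $\mathbb{F}_M^m(\by,\psi)=\mathbb{E}_M^m(\bvD(t_M^m,\by),\psi)$, the pair $(\bv_M^m,\psi_M^m)$ minimizes $\mathbb{E}_M^m$ directly over $\bfU(t_M^m)\times\bfZ_{\psi^0}$, and one then applies Lemma~\ref{lem:Gateaux} to take variations $\bv+h\bw$ with $\bw\in W^{2,\beta}_{\DBC}(\Omega;\R^d)$ \emph{in $\bv$}, never differentiating the composition with $\bvD$. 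This is why the paper transfers the minimization to the physical variable \emph{before} differentiating; you should adopt that order of operations rather than the chain-rule route.
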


\begin{proof}[Proof of Lemma~\ref{lem:weak_interp}]
By~\eqref{equ:discret_mu}, we have
$(-\Delta)^{-1}(\partial_t\Hat{\psi}_M(t)) = \lambda_M(t) - \mu_M(t)$.
For $\zeta\in H^{1}(\Omega)$ we then obtain
\begin{align*}
    \intO \partial_t\Hat{\psi}_M(t,x)\zeta(x) \dx  
    & = \langle (-\Delta)^{-1}(\partial_t\Hat{\psi}_M(t)),\zeta \rangle_{\Tilde{V}_0}
    \\
    & =  \langle  (\lambda_M(t)-\mu_M(t)) , \zeta \rangle_{\Tilde{V}_0}\\
    & = -\intO \nabla \mu_M(t,x)\cdot\nabla\zeta(x) \dx\,,
\end{align*}
where we used $\lambda_M(t)\in\R$ in the last step. This shows \eqref{equ:weak_time}. 

By construction, $(\by^m_M, {\psi}^m_M)$
is a minimizer of $\bbF ^m_M$ in $\bfY\times\bfZ$
for $m=1,\dots,M$.
Due to the homeomorphism between $\bfY$ and $\bfU(t)$ induced by $\bvD(t)$, 
see Lemma~\ref{lemma:estvy},
the pair $q^m_M = (\bv^m_M, {\psi}^m_M)$ 
with $\bv^m_M\coloneqq\bvD(t^m_M,\by^m_M)$
is a minimizer of the functional
\[
        \mathbb{E}_M^m(\bv,\psi) = \calE(\bv,\psi) + \tfrac{1}{2\tau_M}\|\psi-\psi_M^{m-1}\|^2_{\Tilde{V}_0} 
        + \tau_M \intO V\big(\nabla\bv^{m-1}_M,\tfrac{\nabla\bv-\nabla\bv^{m-1}_M}{\tau_M},\psi_M^{m-1}\big)\dx
\]
in the class $\bfU(t^m_M)\times\bfZ$,
where $\calE$ was defined in~\eqref{eq:calEg}.
Since $\calE$ is Gateaux differentiable by Lemma~\ref{lem:Gateaux},
and the second term is quadratic in $\psi$,
we conclude 
$\langle\D_{\psi} \mathbb{E}^m_M(q^m_M),\zeta_1\rangle=0$ for all
$\zeta_1\in V_0$ and $m=1,\dots, M$,
so that
\begin{equation}\label{NEO:equ:ableitung_dings}
    0   
    = \langle \D_{\psi}\calE(\overline{\bv}_M(t),\overline{\psi}_M(t)),\zeta_1\rangle 
    + \langle \partial_t\Hat{\psi}_M(t),\zeta_1 \rangle_{\Tilde{V}_0}\,
\end{equation}
for $t\in[0,T]$.
We use $\intO\lambda_M(t)\zeta_1\dx=\lambda_M(t)\intO\zeta_1\dx = 0$ for $\zeta_1\in V_0$ 
and~\eqref{equ:discret_mu}, which allows us to conclude
\[
\begin{aligned}
    \langle \partial_t\Hat{\psi}_M(t),\zeta_1 \rangle_{\Tilde{V}_0}
    & = ((-\Delta)^{-1}\partial_t\Hat{\psi}_M(t), \zeta_1)_{L^2(\Omega)} \\
    & = (\lambda_M(t)-\mu_M(t), \zeta_1)_{L^2(\Omega)} \\
    & = - (\mu_M(t), \zeta_1)_{L^2(\Omega)}\,.
\end{aligned}
\]
Plugging this into \eqref{NEO:equ:ableitung_dings} gives   
\begin{align}\label{equ:NEO_first_derivative_zero}
    0 = 
    \langle \D_{\psi}\calE(\overline{\bv}_M(t),\overline{\psi}_M(t)),\zeta_1\rangle 
    - (\mu_M(t),\zeta_1)_{L^2(\Omega)}
\end{align}
for all $\zeta_1\in V_0$.
Moreover, for a constant test function $\zeta_2\equiv C\in\R$,
the definitions of $\mu_M$ and $\lambda_M$ in \eqref{equ:discret_mu} and \eqref{equ:lambda_M} imply 
\begin{align*}
 \langle\D _{\psi} &\calE(\overline{\bv}_M(t),\overline{\psi}_M(t)),\zeta_2\rangle 
      -(\mu_M(t),\zeta_2)_{L^2(\Omega)} \\
     & =  |\Omega|\lambda_M(t) C + ((-\Delta)^{-1}\partial_t\Hat{\psi}_M(t),\zeta_2)_{L^2(\Omega)} -  (\lambda_M(t),\zeta_2)_{L^2(\Omega)}
     = 0\,.
\end{align*}
Since every $\zeta\in H^1(\Omega)$ 
can be decomposed as $\zeta = \zeta_1+\zeta_2\in H^1(\Omega)$ 
with $\zeta_1\in V_0$ and $\zeta_2\equiv C\in\R$,
adding~\eqref{NEO:equ:ableitung_dings} and~\eqref{equ:NEO_first_derivative_zero} results in \eqref{equ:weak_chem}
due to~\eqref{O-def-notation-derivatives}.  

Combining the Gateaux differentiability of $\calE$
by Lemma~\ref{lem:Gateaux}
with the quadratic structure of $V$
from assumption~\ref{Ass5},
we conclude from the minimality of $q^m_M = (\bv^m_M, {\psi}^m_M)$ 
that $0=\D_{\bv}\mathbb E^m_M(q^m_M)$
for $m=1,\dots,M$,
so that
\[
    0 
    = \langle \D_{\bv}{\calE}(\overline{\bv}_M(t),\overline{\psi}_M(t)),\bw \rangle + \intO\partial_{\bs{\dot{F}}}V(\underline{\bF}_M(t),\partial_t{\hat{\bF}}_M(t),
    \underline{\psi}_M) : \nabla\bw \dx\,
\]
for $t\in[0,T]$
and all $\bw\in W^{2,\beta}_{\DBC}(\Omega;\R^d)$.
In virtue of~\eqref{O-def-notation-derivatives},
this shows \eqref{equ:weak_elast}.

The last step in this proof is to verify the energy-dissipation inequality. 
By minimality of $(\by^m_M,\,{\psi}_M^m)$, 
we have $\bbF^m_M(\by^m_M,\,{\psi}_M^m)\leq \bbF^m_M(\by^{m-1}_M,\,{\psi}_M^{m-1})$, 
which implies
\begin{align*}
    &\fe(t^m_M,\by^m_M,\psi_M^m) + \frac{1}{2\tau_M} \|{\psi^m_M-\psi_M^{m-1}}\|_{\Tilde{V}_0}^2  +\tau_M \calV\left(t_M^m,\by^{m-1}_M,\tfrac{\by^m_M-\by^{m-1}_M}{\tau_M},\psi_M^{m-1}\right)\\
    & \qquad
    \leq \fe(t^m_M,\by^{m-1}_M,\,{\psi}_M^{m-1})
    = \fe(t_M^{m-1},\by^{m-1}_M,\,{\psi}_M^{m-1}) + \int_{t_M^{m-1}}^{t^m_M}\partial_t{\fe}(s,\by_M^{m-1} ,\psi_M^{m-1})\ds\,.
\end{align*}
Summing up this estimate over $m=1,\,\dots,\,k$ results in
\begin{equation}\label{eq:edi.discrete}
    \begin{aligned}
    \fe(t_M^k,\by_M^k,\psi_M^k) + \sum_{m=1}^k & \bigg( \frac{\tau_M}{2} \bigg\Vert\frac{\psi^m_M-\psi_M^{m-1}}{\tau_M} \bigg\Vert_{\Tilde{V}_0}^2 + \tau_M \calV\left(t^m_M,\by_M^{m-1},\tfrac{\by^m_M-\by^{m-1}_M}{\tau_M^m},\psi_M^{m-1}\right) \bigg)\\ 
    & \leq {\fe}( \tO,\by^0,\psi^0) + \sum_{m=1}^k\int_{t_M^{m-1}}^{t^m_M}\partial_t{\fe}\left(s,\by_M^{m-1},\psi_M^{m-1}\right)\ds\,.
    \end{aligned}
\end{equation}
Using that $\|\frac{1}{\tau_M}(\psi^m_M-\psi_M^{m-1})\|_{\Tilde{V}_0}^2  = \|\nabla\mu^m_M\|^2_{L^2}$ by \eqref{equ:NEO_skp},
we thus have \eqref{equ:weak_energy} for $t\in[0,T]$. 
\end{proof}

From the energy-dissipation inequality we derive 
a useful bound of the interpolants.

\begin{lemma}\label{lemma:Gronwall}
In the setting of \Cref{lem:weak_interp} there exists a constant $C>0$,
independent of $M\in\N$, 
such that
\begin{equation}\label{equ:gron_est_integral}
    \begin{aligned}
    \fe (t,\overline{\by}_M(t),\overline{\psi}_M(t)) + \int_{0}^{t}\int_{\Omega}
    \frac{1}{2}|\nabla\mu_M(s)|^2  
    &+ V(\underline{\bF}_M(s),\partial_t{\hat{\bF}}_M(s),\underline{\psi}_M(s))
    \dx \ds 
    \\
    &\qquad\qquad\qquad \leq  C\left(\fe(0,\by^0,\psi^0)+1\right)\,
    \end{aligned}
\end{equation}
for all $t\in[0,T]$.
\end{lemma}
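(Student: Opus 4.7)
The starting point is the discrete energy--dissipation inequality~\eqref{equ:weak_energy} from Lemma~\ref{lem:weak_interp}. Since the two dissipation integrals on its left-hand side are nonnegative, it is enough to produce a uniform-in-$M$ bound for $e_M(t):=\fe(t,\overline{\by}_M(t),\overline{\psi}_M(t))$ of the form $e_M(t)\le C(\fe(0,\by^0,\psi^0)+1)$, after which~\eqref{equ:gron_est_integral} follows by reading off the remaining terms. The only quantity on the right-hand side of~\eqref{equ:weak_energy} that is not already controlled is the power of external forces $\int_0^t\partial_t\fe(s,\underline{\by}_M(s),\underline{\psi}_M(s))\,\ds$; the plan is to bound it by the free energy evaluated at earlier time steps and then close the argument by a \emph{discrete} Gronwall inequality.

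First, I would invoke the pointwise estimate~\eqref{00-equ:est_time_deriv} of Lemma~\ref{OO_lemma:derivative_estimate} to get
\[
\partial_t\fe(s,\underline{\by}_M(s),\underline{\psi}_M(s))\le c_1\big(\fe(s,\underline{\by}_M(s),\underline{\psi}_M(s))+c_0\big).
\]
Next, on each subinterval $[t_M^{m-1},t_M^m)$ the pair $(\underline{\by}_M(s),\underline{\psi}_M(s))=(\by_M^{m-1},\psi_M^{m-1})$ is constant in $s$. Integrating~\eqref{00-equ:est_time_deriv} in time for fixed arguments (the same Gronwall computation as in the proof of Lemma~\ref{OO_lemma:derivative_estimate}) then yields
\[
\fe(s,\by_M^{m-1},\psi_M^{m-1})+c_0\le e^{c_1\tau_M}\big(\fe(t_M^{m-1},\by_M^{m-1},\psi_M^{m-1})+c_0\big)\le e^{c_1T}\big(e_M(t_M^{m-1})+c_0\big).
\]
Summing over $m$ and plugging these two chained estimates into~\eqref{equ:weak_energy} at the grid point $t=t_M^k$ produces a discrete Gronwall inequality of the form
\[
e_M(t_M^k)\le\fe(0,\by^0,\psi^0)+\tilde c+\tilde c\,\tau_M\sum_{m=0}^{k-1}e_M(t_M^m)
\]
with $\tilde c=\tilde c(c_0,c_1,T)$ independent of $M$.

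The classical discrete Gronwall lemma then delivers $e_M(t_M^k)\le C(\fe(0,\by^0,\psi^0)+1)$ uniformly in $M$ and $k$. For arbitrary $t\in[t_M^{k-1},t_M^k]$, the identity $e_M(t)=\fe(t,\by_M^k,\psi_M^k)$ combined with one further application of the time-Gronwall bound (again costing only a factor $e^{c_1\tau_M}\le e^{c_1T}$) extends the estimate to every $t\in[0,T]$; once this is in hand, the two nonnegative dissipation integrals in~\eqref{equ:weak_energy} are bounded by the same uniform constant, which proves~\eqref{equ:gron_est_integral}. The main obstacle is really just the bookkeeping: because $\underline{\by}_M$ and $\underline{\psi}_M$ carry the values from the \emph{previous} time step, the recursion that closes the Gronwall argument is genuinely discrete rather than continuous, so one has to be careful to index the sum by the correct time slice; beyond this, no essential new ideas are needed past the regularity properties of $\fe$ already established in Lemma~\ref{OO_lemma:derivative_estimate}.
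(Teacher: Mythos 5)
Your proposal is correct and follows essentially the same route as the paper: start from the discrete energy--dissipation inequality~\eqref{equ:weak_energy} (resp.\ its grid-point form~\eqref{eq:edi.discrete}), bound the power term via~\eqref{00-equ:est_time_deriv}, close with a discrete Gronwall inequality applied to $\fe+c_0$, and then read off the nonnegative dissipation integrals. The only difference is that you spell out explicitly the intermediate step of transferring $\fe(s,\by_M^{m-1},\psi_M^{m-1})$ back to $\fe(t_M^{m-1},\by_M^{m-1},\psi_M^{m-1})$ via the time-Gronwall estimate~\eqref{equ:it_follows}, which the paper leaves implicit in its invocation of the discrete Gronwall lemma.
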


\begin{proof}[Proof of \Cref{lemma:Gronwall}] 

Starting with the discrete energy-dissipation inequality \eqref{eq:edi.discrete} and the estimate on $\partial_t\fe$ derived in \eqref{00-equ:est_time_deriv},
we obtain by an application of a discrete version of Gronwall's inequality to $\fe+c_0$ that
\begin{align*}
    \fe (t,\overline{\by}_M(t),\overline{\psi}_M(t)) 
    \leq  (\fe( \tO,\by^0,\psi^0)+c_0)\exp(c_1t)-c_0
    \leq  C(\fe( \tO,\by^0,\psi^0)+1)\,
\end{align*}
for a constant $C>0$ and all $t\in[0,T]$.
We can now use~\eqref{00-equ:est_time_deriv} and 
this estimate in~\eqref{equ:weak_energy}
to conclude from $0\leq\fe(t,q_M)$ that  
\begin{align*}
    \int_{0}^{t}\intO \frac{1}{2}|\nabla\mu_M(s)|^2
    &+ V(\underline{\bF}_M(s),\partial_t{\hat{\bF}}_M(s),\underline{\psi}_M(s)) \dx\ds  \,
    \\
    &\leq\fe( \tO,\by^0,\psi^0)
    + \int_{0}^{t}c_1\left(\fe(s,\underline{\by}_M(s),\underline{\psi}_M(s))+c_0\right)\ds  \\
    &\leq C(\fe( \tO,\by^0,\psi^0)+1)\,.
 \end{align*}
Combining the last two inequalities shows \eqref{equ:gron_est_integral}.
\end{proof}

\noindent
The next step is to show a priori bounds on the interpolants. These estimates are the crucial step to derive the convergence
to a weak solution to~\eqref{equ:pde} in Section~\ref{sec:finalproof}.

\begin{lemma}[A priori bounds]\label{lemma:energy_esti}
There is a constant $C>0$ such that for all $M\in\N$ the interpolants satisfy 
in \eqref{equ:all_interpolants} satisfy the
a priori estimates
\begin{subequations}
\begin{align}
    \|\overline{\by}_M\|_{L^{\infty}(0,T;W^{1,p}(\Omega; \R^d))}+  \,
    \|\underline{\by}_M\|_{L^{\infty}(0,T;W^{1,p}(\Omega; \R^d))} & \leq C\,,\label{equ:apri_y2}\\
    \|\overline{\by}_M\|_{ L^{\infty}(0,T;W^{2,\beta}(\Omega; \R^d))}+
    \|\underline{\by}_M\|_{L^{\infty}(0,T; W^{2,\beta}(\Omega; \R^d))} & \leq 
    C\,,\label{equ:apri_y2GG}\\
    \|\det(\nabla\overline{\by}_M)^{-1}\|_{L^\infty(0,T;L^\infty(\Omega))} +\|\det(\nabla\underline{\by}_M)^{-1}\|_{L^\infty(0,T;L^\infty(\Omega))} & \leq C\, 
    \label{equ:detbound}\\
    \|\partial_t\Hat{\by}_M\|_{L^2(0,T;H^1(\Omega;\R^d))}  & \leq C\,.\label{equ:visco_bound}
    \\
    \|\overline{\psi}_M\|_{L^{\infty}(0,T; H^1(\Omega))}+ \,
    \|\underline{\psi}_M\|_{L^{\infty}(0,T; H^1(\Omega))} & \leq C\,,\label{equ:apri_psi2}\\ 
    \|\partial_t\Hat{\psi}_M\|_{L^2(0,T;H^1(\Omega)^{*})} & \leq C\,,\label{equ:apri_psi2_hut}\\
    \|\mu _M\|_{L^2(0,T;H^1(\Omega))} & \leq C\,. \label{equ:apri_eta} 
\end{align}
\end{subequations}
\end{lemma}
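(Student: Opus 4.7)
The plan is to exploit the discrete energy--dissipation estimate from Lemma~\ref{lemma:Gronwall} together with the coercivity~\eqref{equ:coercive} of the energy functional, the Healey--Krömer determinant lower bound, and a Korn-type inequality for the dissipation term. Since the smallness condition \eqref{eq:smallness} is assumed throughout, Lemma~\ref{lemma:Coercive_LowerSemi} applies and \eqref{equ:gron_est_integral} provides the uniform control
\(
    \sup_{t\in[0,T]}\fe(t,\overline{\by}_M(t),\overline{\psi}_M(t)) + \int_0^T \int_\Omega \tfrac{1}{2}|\nabla\mu_M|^2 + V(\underline{\bF}_M,\partial_t\hat{\bF}_M,\underline{\psi}_M) \,\mathrm d x\,\mathrm d s \le C
\)
with $C$ independent of $M$. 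Plugging this into the coercivity estimate~\eqref{equ:coercive} yields \eqref{equ:apri_y2}, \eqref{equ:apri_y2GG} and \eqref{equ:apri_psi2} for the right-continuous interpolants $\overline{\by}_M, \overline{\psi}_M$. The bounds for the left-continuous interpolants $\underline{\by}_M, \underline{\psi}_M$ follow since these coincide with $\overline{\by}_M, \overline{\psi}_M$ shifted backwards by $\tau_M$ on $(\tau_M, T]$ and take the initial values $\by^0, \psi^0$ on $[0,\tau_M]$.

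For~\eqref{equ:detbound}, the Healey--Krömer result in~\cite{healey2009injective} provides a uniform lower bound $\det(\nabla\by)\ge c>0$ whenever $\|\by\|_{W^{2,\beta}(\Omega)}$ and $\|\det(\nabla\by)^{-1}\|_{L^q(\Omega)}$ are bounded and $\beta>d$, $q\ge \beta d/(\beta-d)$, as granted by assumption~\ref{Ass0} and the previous step. Applying this pointwise in $t$ to $\overline{\by}_M(t)$ and $\underline{\by}_M(t)$ gives the desired $L^\infty(0,T;L^\infty(\Omega))$ bound.

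The bound~\eqref{equ:visco_bound} is obtained from the viscous term in the dissipation. By~\ref{Ass5} and the lower bound \eqref{equ:fuer_korn},
\begin{equation*}
    \int_0^T\!\!\int_\Omega \bigl|\partial_t\hat{\bF}_M^T\underline{\bF}_M + \underline{\bF}_M^T\partial_t\hat{\bF}_M\bigr|^2 \,\mathrm d x\,\mathrm d s \le C.
\end{equation*}
Since $\underline{\bF}_M$ is uniformly bounded in $L^\infty((0,T)\times\Omega)$ by the Sobolev embedding $W^{2,\beta}(\Omega)\hookrightarrow C^1(\overline\Omega)$ applied to \eqref{equ:apri_y2GG} and has uniformly positive determinant by~\eqref{equ:detbound}, the generalized Korn inequality of Neff~\cite{neff2002korn} and Pompe~\cite{pompe2003korn} applies to yield $\|\partial_t\nabla\hat{\bv}_M\|_{L^2((0,T);L^2(\Omega))}\le C$. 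Combining with Lemma~\ref{lemma:estvy} (applied to $\partial_t\hat{\by}_M$, using the boundary identity $\partial_t\hat{\by}_M=0$ on $\DBC$ and Poincaré's inequality) delivers~\eqref{equ:visco_bound}. This is the technically most delicate step, as one must transfer from the symmetrized right-Cauchy--Green rate to $\partial_t\nabla\hat{\bv}_M$ and then to $\partial_t\nabla\hat{\by}_M$.

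For the remaining two estimates, testing the discrete Cahn--Hilliard equation~\eqref{equ:weak_time} with $\zeta\in H^1(\Omega)$ gives
\(
    \|\partial_t\hat{\psi}_M(t)\|_{H^1(\Omega)^*}\le \|\nabla\mu_M(t)\|_{L^2(\Omega)},
\)
so \eqref{equ:apri_psi2_hut} follows from the $L^2$-in-time control of $\nabla\mu_M$ in~\eqref{equ:gron_est_integral}. For~\eqref{equ:apri_eta}, it remains to bound the mean of $\mu_M(t)$, which by~\eqref{equ:lambda_M} and \eqref{eq:Wder.psi} is controlled by $\int_\Omega |\partial_\bA\Wel(\Fe)\Fe^T|\frac{|g'|}{g} + a|\psi|^3 + a|\psi|\,\mathrm d x$. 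The first contribution is dominated by $C(1+\Wel(\Fe))$ via~\eqref{equ:stress_contral_KH2} and~\ref{Ass6}, hence by $C(1+\fe)$, while the polynomial terms in $\psi$ are bounded using \eqref{equ:apri_psi2} and the embedding $H^1(\Omega)\hookrightarrow L^6(\Omega)$. Combining this $L^\infty$-in-time mean bound with the gradient bound via Poincaré--Wirtinger yields~\eqref{equ:apri_eta} and completes the proof.
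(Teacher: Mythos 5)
Your proposal follows essentially the same route as the paper: use Lemma~\ref{lemma:Gronwall} together with the coercivity \eqref{equ:coercive} for \eqref{equ:apri_y2}, \eqref{equ:apri_y2GG}, \eqref{equ:apri_psi2}; Healey--Kr\"omer (Lemma~\ref{healy-kromer}) for \eqref{equ:detbound}; the Neff--Pompe Korn inequality (Lemma~\ref{lemma:Korn}) plus the $C^{1,\lambda}$-bound on $\underline{\bF}_M$ and the determinant bound for \eqref{equ:visco_bound}; and \eqref{equ:weak_time}, \eqref{equ:weak_chem} with a constant test function and Poincar\'e for \eqref{equ:apri_psi2_hut}--\eqref{equ:apri_eta}. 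The one point worth flagging is in the \eqref{equ:visco_bound} step: Lemma~\ref{lemma:Korn} controls $\|\bw\|_{H^1}^2$ only up to the boundary term $\int_\Gamma |\bw|^2$, and for $\bw=\partial_t\hat\bv_M$ this boundary trace equals $\partial_t\bvD$ on $\DBC$ (bounded by \ref{Ass7}) — your write-up jumps to ``$\|\partial_t\nabla\hat\bv_M\|_{L^2}\le C$'' without recording that this boundary contribution is what Lemma~\ref{lemma:Korn} requires; the paper handles it explicitly. Your subsequent Poincar\'e step on $\partial_t\hat\by_M$ (which vanishes on $\DBC$) is correct but slightly redundant, since Korn already gives the full $H^1$ bound on $\partial_t\hat\bv_M$ and Lemma~\ref{lemma:estvy}-type estimates transfer it directly to $\partial_t\hat\by_M$.
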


To derive these bounds, we make use of two auxiliary results.
The first one is based on the following theorem by Healey and Krömer \cite{healey2009injective},
which ensures a positive lower bound for the determinant of the deformation gradient $\bF$,
and which will finally lead to the bound~\eqref{equ:detbound}.

\begin{lemma}\label{healy-kromer}
Let $\eta:{\mathrm{GL} }_+(d)\to\R_{\geq 0}$ be twice continuously differentiable and $H:\R^{\dxd\times d}\to\R_{\geq 0}$ be convex and continuously differentiable, such that there are $p,q,\beta\in(1,\infty)$ with
$\beta>d$, $p>2$, $q\geq\tfrac{\beta d}{(\beta-d)}$, 
and $c,K>0$ with 
\[
\begin{aligned}
        \eta(\bF)&\geq c|\bF|^p+c\det(\bF)^{-q} &&\text{for all }\bF\in{\mathrm{GL} }_+(d)\,,\\
        c|\bs{G}|^\beta&\leq H(\bs{G})\leq K(1+|\bs{G}|^\beta)&&\text{for all }\bG\in\R^{\dxd\times d}\,.
\end{aligned}
\]
Let $\widetilde{\calF}:\bfY\to\R$ be defined by
$\widetilde{\calF}(\by)=\int_{\Omega}\eta(\nabla\by) + H(\nabla^2\by)\dx$. 
Then for each $C_M>0$ there exists $C_{HK}>0$ such that for all $\by\in\bfY$
with $\widetilde{\calF}(\by)\leq C_M$
it holds
\begin{equation*} 
    \|\by\|_{W^{2,p}(\Omega;\R^d)}\leq C_{HK}, \qquad
    \det(\nabla\by(x))\geq C_{HK}^{-1}\,
\end{equation*} 
for all $x\in\Omega$. 
\end{lemma}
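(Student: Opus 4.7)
The plan is to adapt the argument of Healey--Kr\"omer~\cite{healey2009injective}: deduce $W^{2,\beta}$-boundedness and $C^{1,\lambda}$-regularity from the energy bound, and then exclude degeneracy of the Jacobian via a contradiction argument based on the integrability $\det(\nabla\by)^{-1}\in L^q$ together with the uniform H\"older regularity of $\nabla\by$.

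\textbf{Step 1 (A priori bounds and H\"older regularity).} From $\widetilde{\calF}(\by)\leq C_M$ and the growth assumptions on $\eta$ and $H$ I immediately obtain
\[
    \|\nabla\by\|_{L^p(\Omega)}^p+\|\det(\nabla\by)^{-1}\|_{L^q(\Omega)}^q+\|\nabla^2\by\|_{L^\beta(\Omega)}^\beta\leq C(C_M).
\]
Since $\by=\id$ on $\Gamma_D$ with $\mathcal H^{d-1}(\Gamma_D)>0$, Poincar\'e's inequality applied to $\by-\id$ upgrades this to a uniform bound on $\|\by\|_{W^{2,\beta}(\Omega;\R^d)}$. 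The condition $\beta>d$ yields the Morrey embedding $W^{2,\beta}(\Omega)\hookrightarrow C^{1,\lambda}(\overline\Omega)$ with H\"older exponent $\lambda=1-d/\beta\in(0,1)$, so $\det(\nabla\by)\in C^{0,\lambda}(\overline\Omega)$ with a uniform H\"older constant $L=L(C_M)$.

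\textbf{Step 2 (Contradiction argument for positivity of the Jacobian).} Suppose the claimed lower bound on $\det(\nabla\by)$ fails. Then there is a sequence $(\by_n)\subset\bfY$ with $\widetilde{\calF}(\by_n)\leq C_M$ such that $\epsilon_n:=\min_{\overline\Omega}\det(\nabla\by_n)\to 0$, attained at some $x_n\in\overline\Omega$. The H\"older bound from Step~1 implies
\[
    \det(\nabla\by_n(x))\leq\epsilon_n+L|x-x_n|^\lambda
    \qquad\text{for all }x\in\overline\Omega.
\]
Since $\Omega$ has Lipschitz boundary, there exist $c_0,r_0>0$ with $|B_r(x_n)\cap\Omega|\geq c_0\,r^d$ for all $r\leq r_0$, uniformly in $x_n$. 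Passing to polar coordinates around $x_n$ and substituting $u=Ls^\lambda/\epsilon_n$ yields
\[
    \int_\Omega\det(\nabla\by_n)^{-q}\dx
    \geq c_1\,\epsilon_n^{\,d/\lambda-q}\int_0^{U_n}\frac{u^{d/\lambda-1}}{(1+u)^q}\,\mathrm du,
\]
with $U_n=Lr_0^\lambda/\epsilon_n\to\infty$. Under the assumption $q\geq\beta d/(\beta-d)=d/\lambda$ the right-hand side diverges as $\epsilon_n\to 0$, polynomially if $q>d/\lambda$ and logarithmically in the borderline case $q=d/\lambda$ where the inner integral behaves like $\log U_n$. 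This contradicts the uniform bound from Step~1 and therefore establishes the lower bound $\det(\nabla\by)\geq C_{HK}^{-1}$.

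\textbf{Main obstacle.} The delicate point is the borderline case $q=\beta d/(\beta-d)$, where the divergence is merely logarithmic; one must carefully verify that $\log(1/\epsilon_n)$ exceeds the allowed $L^q$-mass once $\epsilon_n$ is small enough. A secondary technical issue is that the geometric constant $c_0$ in $|B_r(x_n)\cap\Omega|\geq c_0 r^d$ must be uniform in $x_n\in\overline\Omega$, including boundary points, which is ensured by the Lipschitz regularity of $\partial\Omega$ via a uniform interior cone condition.
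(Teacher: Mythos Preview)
Your argument is correct and faithfully reproduces the Healey--Kr\"omer proof: extract $C^{1,\lambda}$-regularity of $\nabla\by$ from the $W^{2,\beta}$-bound with $\lambda=1-d/\beta$, then use H\"older continuity of $\det(\nabla\by)$ near a putative zero to contradict the uniform $L^q$-bound on $\det(\nabla\by)^{-1}$, the threshold $q\geq d/\lambda=\beta d/(\beta-d)$ being precisely what ensures divergence (logarithmic in the borderline case). The paper itself gives no proof but simply cites the original result \cite{healey2009injective} and the version in \cite[Theorem~3.1]{mielke2020thermoviscoelasticity}; your sketch supplies exactly the argument behind those references.

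One minor remark: the statement as written claims a $W^{2,p}$-bound, whereas the growth conditions on $H$ only yield $\nabla^2\by\in L^\beta$, so the correct conclusion (and the one you actually prove) is a uniform $W^{2,\beta}$-bound together with the $W^{1,p}$-bound on $\by$. This appears to be a typo in the statement; in the paper's subsequent use of the lemma only the $W^{2,\beta}$-bound and the determinant lower bound are invoked.
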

\begin{proof}
The original result was established in~\cite{healey2009injective}.
The version stated here can be found in~\cite[Theorem 3.1]{mielke2020thermoviscoelasticity}.
\end{proof}

The second result 
is a generalized Korn inequality,
which was essentially derived in~\cite{neff2002korn,pompe2003korn}.
It enables us to obtain the uniform bound \eqref{equ:visco_bound}
from the boundedness of the nonlinear viscosity term.

\begin{lemma}
\label{lemma:Korn}    
Let $\lambda\in(0,1)$
and $K>1$.
Then there exists $C_K>0$ such that for all 
$\bF\in C^\lambda(\Omega;\R^{d\times d})$ with 
$\norm{\bF}_{C^\lambda(\Omega)}\leq K$
and with $\det \bF(x)\geq K^{-1}$ for $x\in\Omega$,
it holds
\[
\forall \, \bw\in H^1(\Omega): \quad
\int_\Omega | \nabla\bw^T\bF+\bF^T\nabla\bw|^2\,\dx
+\int_\Gamma |\bw|^2\,\mathrm{d}\mathcal H^{d-1}
\geq C_K \norm{\bw}_{H^1(\Omega)}^2.
\]
\end{lemma}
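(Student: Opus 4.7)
The plan is to prove this generalized Korn inequality with a constant depending only on $K$ and $\lambda$ by a compactness-contradiction argument. The crucial input is the classical Neff--Pompe Korn inequality for a fixed continuous matrix field with uniformly positive determinant, which I would invoke as a black box from \cite{neff2002korn,pompe2003korn}. The Hölder regularity $C^\lambda$ enters only through Arzelà--Ascoli, to extract uniformly convergent subsequences from the admissible class of $\bF$.

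First, I would record the algebraic identity
\[
\nabla\bw^T\bF+\bF^T\nabla\bw=2\bF^T\,\mathrm{sym}(\nabla\bw\,\bF^{-1})\,\bF,
\]
where $\bF^{-1}$ exists pointwise and satisfies $|\bF^{-1}|\leq CK^{d-1}$ by Cramer's rule combined with $|\bF|\leq K$ and $\det\bF\geq K^{-1}$. Hence the $L^2$-norm of the left-hand side is equivalent, with constants depending only on $K$, to $\norm{\mathrm{sym}(\nabla\bw\,\bF^{-1})}_{L^2(\Omega)}$, placing the inequality squarely in the Neff--Pompe framework. The pointwise-in-$\bF$ input then reads: for every fixed $\bF_\infty\in C^0(\overline\Omega;\R^{d\times d})$ with $\det\bF_\infty(x)\geq K^{-1}$, there exists $C_{\bF_\infty}>0$ such that
\[
\int_\Omega|\nabla\bu^T\bF_\infty+\bF_\infty^T\nabla\bu|^2\,\dx+\int_\Gamma|\bu|^2\,\mathrm d\mathcal H^{d-1}\geq C_{\bF_\infty}\norm{\bu}_{H^1(\Omega)}^2
\]
holds for every $\bu\in H^1(\Omega;\R^d)$.

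To upgrade this to the $K$-uniform statement, I would argue by contradiction. If the lemma fails, there exist $(\bF_n)\subset C^\lambda(\Omega;\R^{d\times d})$ with $\norm{\bF_n}_{C^\lambda(\Omega)}\leq K$ and $\det\bF_n\geq K^{-1}$, and $(\bw_n)\subset H^1(\Omega;\R^d)$ with $\norm{\bw_n}_{H^1(\Omega)}=1$, such that the left-hand side of the claimed estimate, evaluated at $(\bF_n,\bw_n)$, tends to $0$ as $n\to\infty$. By the compact embedding $C^\lambda(\overline\Omega)\Subset C^0(\overline\Omega)$, a subsequence of $(\bF_n)$ converges uniformly to some $\bF_\infty\in C^\lambda(\overline\Omega)$, which inherits $\norm{\bF_\infty}_{C^\lambda(\Omega)}\leq K$ and $\det\bF_\infty\geq K^{-1}$ (the latter by continuity of the determinant under uniform convergence). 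Applying the pointwise-in-$\bF$ Korn inequality to $\bF_\infty$ and $\bu=\bw_n$, and splitting $\bF_\infty=\bF_n+(\bF_\infty-\bF_n)$, yields
\[
C_{\bF_\infty}\leq 2\norm{\nabla\bw_n^T\bF_n+\bF_n^T\nabla\bw_n}_{L^2(\Omega)}^2+8\norm{\bF_\infty-\bF_n}_{L^\infty(\Omega)}^2+\norm{\bw_n}_{L^2(\Gamma)}^2,
\]
where I used $\norm{\nabla\bw_n}_{L^2(\Omega)}\leq 1$. Each term on the right tends to $0$, contradicting $C_{\bF_\infty}>0$.

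The main obstacle is genuinely the Neff--Pompe pointwise-in-$\bF$ Korn inequality itself, since the limit $\bF_\infty$ lives only in $C^0(\overline\Omega)$ and the classical $C^1$ or $W^{1,\infty}$ Korn machinery does not immediately apply; once that input is in hand, the compactness-in-$\bF$ upgrade carried out above is a routine Arzelà--Ascoli argument. A minor technical point to verify is that the Hölder exponent $\lambda\in(0,1)$ enters only through compactness in $C^0$, so no quantitative Hölder modulus beyond the bound $K$ features in the final constant $C_K$.
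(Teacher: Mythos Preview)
Your proof is correct and follows essentially the same approach as the paper: invoke the pointwise-in-$\bF$ Korn inequality of Pompe~\cite{pompe2003korn} as a black box, then upgrade to uniformity in $K$ via an Arzel\`a--Ascoli compactness-contradiction argument. The paper itself merely cites \cite{pompe2003korn} for the first step and \cite[Theorem~3.3]{mielke2020thermoviscoelasticity} for the second, so you have spelled out what the paper leaves to the references.
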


\begin{proof}
The result for a constant $C_{K}$ depending on $\bF$
was shown in~\cite[Theorem 2.3]{pompe2003korn}.
To obtain a uniform constant that only depends on $K$,
we can repeat the argument from~\cite[Theorem 3.3]{mielke2020thermoviscoelasticity},
where the uniformity was shown for the special case $\bw|_\Gamma=0$.
\end{proof}

Observe that for Lemma~\ref{lemma:Korn}, besides the uniform 
lower bound on $\det \bF$, we also need
the uniform bound of $\bF$ in $C^\lambda(\Omega)$,
which will be ensured by the uniform bound
of $\by$, and thus of $\bv$, in $W^{2,\beta}(\Omega)$
and Sobolev embeddings.
The second-order derivatives from the hyperelasticity term 
are thus also necessary to ensure sufficient time regularity.

With the help of Lemma~\ref{healy-kromer} and Lemma~\ref{lemma:Korn},
we can now derive the a priori estimates from Lemma~\ref{lemma:energy_esti}.

\begin{proof}[Proof of \Cref{lemma:energy_esti}]
The coercivity estimate \eqref{equ:coercive} and the bound derived in \Cref{lemma:Gronwall} allow us to conclude the uniform bounds \eqref{equ:apri_y2}, \eqref{equ:apri_y2GG} and \eqref{equ:apri_psi2}. 
Further, from \Cref{lemma:Gronwall} we conclude the boundedness of ${(\nabla\mu_M)}_{M\in\N}$ in $L^2(0,T;L^2(\Omega))$,
which directly implies the bound~\eqref{equ:apri_psi2_hut} 
by~\eqref{equ:weak_time}.
Additionally, choosing $\zeta\equiv1$
in~\eqref{equ:weak_chem} gives the boundedness of $(\dashint_{\Omega}\mu_M(t)\dx)_{M\in\N}$
using~\eqref{equ:apri_psi2}.
Hence, we can use Poincare`s inequality to conclude \eqref{equ:apri_eta}.

To derive the lower bound on the Jacobians 
stated in~\eqref{equ:detbound},
we use Lemma~\ref{healy-kromer}
with
$
\eta(\bF):=|\bF|^p+\det(\bF)^{-q}$ and $H(\bG):=|\bG|^\beta$.
Then the coercivity estimate~\eqref{equ:coercive}
and Lemma~\ref{lemma:Gronwall}
imply that $\widetilde{\calF}(\overline\by_M(t))$
and $\widetilde{\calF}(\underline\by_M(t))$
are bounded uniformly in $t\in[0,T]$ and $M\in\N$.
Now Lemma~\ref{healy-kromer} yields~\eqref{equ:detbound}.

To prove \eqref{equ:visco_bound},
we use the generalized Korn inequality from Lemma~\ref{lemma:Korn}.
By~\eqref{equ:apri_y2GG}, Lemma~\ref{lemma:estvy} and Sobolev embeddings,
we have a uniform upper bound for
$\norm{\underline{\bF}_M(s)}_{C^{\lambda}(\overline{\Omega})}$
for $\lambda=1-d/p$,
and~\eqref{equ:detbound} 
yields a uniform lower bound for $\det(\underline{\bF}_M(s))$.
After estimating $V$ from below by~\eqref{equ:fuer_korn},
we can thus use Lemma~\ref{lemma:Korn} to obtain 
\begin{align*} 
    \int_0^T\intO& V(\underline{\bF}_M(s),\partial_t\hat{\bF}_M(s),\underline{\psi}_M(s))\dx\ds \\
    & \geq  c\int_0^T\intO|\partial_t\hat{\bF}_M(s)^T\underline{\bF}_M(s)+\underline{\bF}_M(s)^T\partial_t\hat{\bF}_M(s)|^2\dx\ds\\
    & \geq  c\int_0^T\Big(\|\partial_t\hat{\bv(s)}_M\|_{H^1(\Omega;\R^d)}^2\ds
    -\int_\Gamma |\partial_t\bvD(s)|^2\,\mathrm{d}\mathcal H^{d-1}\Big)\ds
    \\
    &\geq c(\|\partial_t\hat{\by}_M\|_{L^2([0,T];H^1(\Omega;\R^d))}^2-1)\,,
\end{align*}
where the last estimate follows from Lemma~\ref{lemma:estvy}
and \eqref{equ:BDC_2}.
Due to 
Lemma~\ref{lemma:Gronwall}, the left-hand side is bounded,
and
we obtain~\eqref{equ:visco_bound}.
This concludes the proof. 
\end{proof}

\section{Passage to the time-continuous limit}
\label{sec:finalproof}

To show existence of weak solutions to~\eqref{equ:pde},
we pass to the limit along the time-discrete solutions
constructed in Section~\ref{sec:timediscrete}.
At first, we show existence of a subsequence
with suitable convergence properties.

\begin{lemma}[Convergence of the interpolants]\label{lemma:interpol_conv}
Let~\ref{Ass0}--\ref{Ass7} be satisfied,
and let 
the assumptions of \Cref{lemma:energy_esti} be satisfied. Then we find (not relabeled) subsequences of the introduced interpolants, a pair $(\byg,\psig):[0,T]\to\bfY\times\bfZ$ and 
a function $\mug\in L^2(0,T;H^1(\Omega))$ such that 
\begin{subequations}\label{equ:conv_weak}
\begin{align}
    \hat{\by}_M,\,\overline{\by}_M,\, \underline{\by}_M &
    \xrightharpoonup{*}
     \byg
    && \text{in }L^{\infty}(0,T;W^{2,\beta}(\Omega;\R^d)) \,, \label{equ:y_weak} \\
    \hat{\by}_M &\rightharpoonup \byg
    &&\text{in } H^1(0,T;H^1(\Omega;\R^d))\,, \label{equ:y_derivative}
    \\    
    \hat{\by}_M
    & \to \byg
    && \text{in }C([0,T],C^{1,\lambda}(\overline{\Omega};\R^{d}))\, \text{ for all }\lambda\in(0,1-\tfrac{d}{\beta})\,,\label{equ:y_hut_stark} \\
    \hat{\by}_M,\,\overline{\by}_M,\underline{\by}_M
    & \to \byg
    && \text{in }L^\infty([0,T],W^{1,r}(\Omega;\R^{d}))
    \text{ for all } r\in[1,\infty),
    \label{equ:y_stark}
    \\
    \overline{\by}_M(t),\, \underline{\by}_M(t) &
    \rightharpoonup
     \byg(t)
    && \text{in }W^{2,\beta}(\Omega;\R^d) \text{ for all } t\in[0,T]\,, \label{equ:y_weak_ptw} \\
    \intertext{and}
    \Hat{\psi}_M,\,\overline{\psi}_M ,\,\underline{\psi}_M 
    & \xrightharpoonup{*}\psig 
    && \text{in }L^{\infty}(0,T;H^1(\Omega))\,, \label{equ:conv_psi_linf}\\
    \Hat{\psi}_M
    &  \rightharpoonup \psig
    && \text{in }H^1(0,T;H^1(\Omega)^*)\,,\label{equ:conv_hat_psi}\\
    \Hat{\psi}_M,\,\overline{\psi}_M ,\,\underline{\psi}_M 
    & \to\psig 
    && \text{in }L^{\infty}(0,T;H^1(\Omega)^*)\,, \label{equ:conv_psi_strong}\\
    \Hat{\psi}_M(t),\,\overline{\psi}_M(t),\underline{\psi}_M(t)
    & \rightharpoonup\psig
    &&\text{in } H^1(\Omega)\text{ for all }t\in[0,T]\,,\label{equ:cong_psi_l2}\\ 
    \mu_M
    & \rightharpoonup \mug
    && \text{in }L^2(0,T;H^1(\Omega))\,\label{equ:mu} 
\end{align}
\end{subequations}
as $M\to\infty$. 
Moreover, we have
\begin{subequations}
\label{eq:conv-special}
    \begin{align}    
    \nabla^2\overline{\bv}_M
    &\to\nabla^2\bvg \quad
    && \text{in }L^{\beta}(0,T;L^{\beta}(\Omega;\R^{\dxd\times d}))\,, \label{o-special-1}\\
    \overline{\psi}_M,\underline{\psi}_M
    &\to\psig\quad
    && \text{in }L^r(0,T;H^1(\Omega)) \text{ for all } r\in[1,\infty)\,,\label{o-special-2}
    \end{align}
\end{subequations}
where $\overline{\bv}_{M}(t,x):=\bvD(t,\overline{\by}_M(x))$ and $\bvg(t,x):=\bvD(t,\byg(x))$ for $(t,x)\in[0,T]\times\Omega$,
and it holds
\begin{equation}\label{eq:conv-energy}
    \fe(t,\by_M,\psi_M)\to\fe(t,\by,\psi) \qquad \text{ for all } t\in[0,T].
\end{equation}
\end{lemma}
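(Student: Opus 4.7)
The plan is to extract weakly/weakly-$\ast$ convergent subsequences from the a priori bounds of Lemma~\ref{lemma:energy_esti} via Banach--Alaoglu, upgrade them to strong convergences using the Aubin--Lions--Simon compactness theorem, and finally promote the weak convergences $\nabla^2\overline{\bv}_M\rightharpoonup\nabla^2\bvg$ in $L^\beta$ and $\nabla\overline{\psi}_M\rightharpoonup\nabla\psig$ in $L^2$ to strong ones via a Minty-type monotonicity argument based on the time-discrete Euler--Lagrange equations from Lemma~\ref{lem:weak_interp}.

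\textbf{Weak and common limits.}
Banach--Alaoglu applied to the bounds (\ref{equ:apri_y2GG})--(\ref{equ:apri_eta}) yields (not relabeled) subsequences satisfying (\ref{equ:y_weak}), (\ref{equ:y_derivative}), (\ref{equ:conv_psi_linf}), (\ref{equ:conv_hat_psi}) and (\ref{equ:mu}). To identify a common weak-$\ast$ limit for the three interpolants I use the elementary identity $\overline{\by}_M(t)-\hat{\by}_M(t)=(t_M^m-t)\partial_t\hat{\by}_M(t)$ on $(t_M^{m-1},t_M^m]$, which combined with (\ref{equ:visco_bound}) yields $\|\overline{\by}_M-\hat{\by}_M\|_{L^2(0,T;H^1(\Omega))}\leq\tau_M\|\partial_t\hat{\by}_M\|_{L^2(0,T;H^1(\Omega))}\to 0$, and analogously for $\underline{\by}_M$ and for the phase-field interpolants in $L^2(0,T;H^1(\Omega)^*)$ using (\ref{equ:apri_psi2_hut}).

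\textbf{Strong and pointwise-in-$t$ convergences.}
Since $\beta>d$, the embedding $W^{2,\beta}(\Omega)\Subset C^{1,\lambda}(\overline\Omega)\hookrightarrow H^1(\Omega)$ is compact for $\lambda<1-d/\beta$, and the Aubin--Lions--Simon theorem applied to $\hat{\by}_M\in L^\infty(0,T;W^{2,\beta})\cap H^1(0,T;H^1)$ yields (\ref{equ:y_hut_stark}). Writing $\overline{\by}_M(t)=\hat{\by}_M(s_M(t))$ with $|s_M(t)-t|\leq\tau_M$ and invoking the continuity of the limit $\byg\in C([0,T];C^{1,\lambda})$ together with $C^{1,\lambda}(\overline\Omega)\hookrightarrow W^{1,r}(\Omega)$ for every $r<\infty$ transfers the strong convergence to all three interpolants, proving (\ref{equ:y_stark}). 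Simon's theorem for the triple $H^1(\Omega)\Subset L^2(\Omega)\hookrightarrow H^1(\Omega)^*$ together with (\ref{equ:apri_psi2})--(\ref{equ:apri_psi2_hut}) gives (\ref{equ:conv_psi_strong}), and the same difference estimate transfers it to the piecewise constant interpolants. At any fixed $t\in[0,T]$ the uniform bounds of Lemma~\ref{lemma:energy_esti} and weak compactness of $W^{2,\beta}$, $H^1$ deliver weakly convergent subsequences, whose limit is forced to be $\byg(t)$ resp.\ $\psig(t)$ by the already-established strong limits; this proves (\ref{equ:y_weak_ptw}) and (\ref{equ:cong_psi_l2}).

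\textbf{Strong convergences in (\ref{eq:conv-special}) and energy convergence.}
This step is the main obstacle. From the representation (\ref{equ:componentwise-zweite}) and Lemma~\ref{lemma:conv_gradient_strong}, the strong convergence (\ref{equ:y_stark}) implies $\nabla^2\overline{\bv}_M\rightharpoonup\nabla^2\bvg$ in $L^\beta((0,T)\times\Omega)$; to upgrade I test the discrete Euler--Lagrange equation (\ref{equ:weak_elast}) against $\bw=\overline{\bv}_M-\bvg$, which is admissible because both functions coincide with $\bvD$ on $\DBC$. The first-grade elastic and Korteweg contributions vanish in the limit because their stresses converge strongly (via Lemma~\ref{lemma:g_conv_NEO} and the strong convergence of $\nabla\bv_M$) while $\nabla(\overline{\bv}_M-\bvg)\to 0$ in $L^\infty((0,T)\times\Omega)$; the viscous term vanishes as well, since $\partial_t\hat{\bF}_M$ is bounded in $L^2$ by (\ref{equ:visco_bound}) and $\nabla(\overline{\bv}_M-\bvg)\to 0$ in $L^\infty$. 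This produces
\[
\limsup_{M\to\infty}\int_0^T\!\!\int_\Omega\partial_{\bG}\Why(\nabla^2\overline{\bv}_M)\tdots(\nabla^2\overline{\bv}_M-\nabla^2\bvg)\dxdt\leq 0,
\]
and the strict convexity with $\beta$-growth of $\Why$ from \ref{Ass2} forces (\ref{o-special-1}) through the uniform convexity inequality $(\partial_{\bG}\Why(\bG_1)-\partial_{\bG}\Why(\bG_2))\tdots(\bG_1-\bG_2)\gtrsim|\bG_1-\bG_2|^\beta$. Testing (\ref{equ:weak_chem}) with $\zeta=\overline{\psi}_M-\psig$ and writing $|\overline{\bF}_M^{-T}\nabla(\overline{\psi}_M-\psig)|^2$ as $\overline{\bF}_M^{-T}\nabla\overline{\psi}_M\cdot\overline{\bF}_M^{-T}\nabla(\overline{\psi}_M-\psig)-\overline{\bF}_M^{-T}\nabla\psig\cdot\overline{\bF}_M^{-T}\nabla(\overline{\psi}_M-\psig)$ reduces the problem to a weak/strong product argument, which together with the uniform lower bound (\ref{equ:detbound}) on $\det\bF$ yields (\ref{o-special-2}). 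Finally, (\ref{eq:conv-energy}) follows by extracting a further pointwise-in-$t$ subsequence from (\ref{o-special-1})--(\ref{o-special-2}), combining it with (\ref{equ:y_weak_ptw})--(\ref{equ:cong_psi_l2}) and applying dominated convergence using the continuity of $\Wel$, $\Wpf$, $\Why$, the uniform $L^\infty$-bounds on $\nabla\bv_M$ via $W^{2,\beta}\hookrightarrow C^1$, and the lower bound on $\det(\nabla\bv_M)$.
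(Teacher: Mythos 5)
Your proposal tracks the paper's strategy closely for the weak/strong/pointwise convergences \eqref{equ:conv_weak}, and in fact a couple of your shortcuts are cleaner than the printed argument: transferring the $C([0,T];C^{1,\lambda}(\overline\Omega))$ convergence of $\hat\by_M$ to the piecewise-constant interpolants via $\overline\by_M(t)=\hat\by_M(s_M(t))$ and the uniform continuity of the limit avoids the paper's interpolation step for \eqref{equ:y_stark}, and identifying the pointwise weak limits of $\overline\psi_M(t)$ by the strong $(H^1)^*$ convergence sidesteps the Helly-type selection the paper invokes for \eqref{equ:cong_psi_l2}.

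However, there is a genuine gap in your treatment of \eqref{o-special-1}. After testing the discrete Euler--Lagrange equation with $\bw=\overline\bv_M-\bvg$ and killing the lower-order terms, you arrive (correctly) at
\[
\limsup_{M\to\infty}\int_{\Omega_T}\partial_{\bG}\Why(\nabla^2\overline\bv_M)\tdots(\nabla^2\overline\bv_M-\nabla^2\bvg)\dxdt\leq0,
\]
but then you close the argument by asserting the modulus inequality
$(\partial_{\bG}\Why(\bG_1)-\partial_{\bG}\Why(\bG_2))\tdots(\bG_1-\bG_2)\gtrsim|\bG_1-\bG_2|^\beta$.
This inequality does \emph{not} follow from \ref{Ass2}: the assumption gives strict convexity and two-sided $\beta$-growth of the function $\Why$, but no quantitative lower bound on its modulus of convexity. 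One can easily build a strictly convex $\Why$ satisfying \eqref{equ:prop-R_2}--\eqref{stress_c_reg} whose second derivative is arbitrarily small on a bounded annulus, and then the ratio of the left- to the right-hand side in your inequality is arbitrarily small there. (The special Ogden example in \eqref{def-Wel-0} is $\beta$-uniformly convex, but \ref{Ass2} is much weaker.) The correct route, and the one the paper takes, is one step longer: use the $\limsup$ estimate together with the convexity inequality $\Why(\bG_1)-\Why(\bG_2)\leq\partial_\bG\Why(\bG_1)\tdots(\bG_1-\bG_2)$ and the weak lower semicontinuity coming from $\nabla^2\overline\bv_M\rightharpoonup\nabla^2\bvg$ in $L^\beta$ to conclude $\int_{\Omega_T}\Why(\nabla^2\overline\bv_M)\to\int_{\Omega_T}\Why(\nabla^2\bvg)$; then weak $L^\beta$ convergence, energy convergence, strict convexity and the growth \eqref{equ:prop-R_2} yield strong $L^\beta$ convergence by a standard result (Visintin, \emph{Models of phase transitions}, Thm.~X.2.4). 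A similar remark applies to \eqref{o-special-2}: what you actually obtain from testing \eqref{equ:weak_chem} is convergence of $\int|\overline\bF_M^{-T}\nabla\overline\psi_M|^2$, which upgrades to strong $L^2(0,T;H^1)$ convergence via the weak convergence and the Hilbert-space norm, and then to $L^r$ by interpolation with the $L^\infty(0,T;H^1)$ bound from \eqref{equ:apri_psi2}; this interpolation step is missing from your writeup.
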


\begin{proof}
While the prove of the convergence properties in~\eqref{equ:conv_weak}
completely relies on the a priori bounds established in Lemma~\ref{lemma:energy_esti},
for~\eqref{eq:conv-special} we exploit the specific structure of the underlying equations.

We start with the convergence properties of $(\hat{\by}_M)_M$.
By \eqref{equ:apri_y2GG} and \eqref{equ:visco_bound} we have
\begin{align*}
    \|\hat{\by}_M\|_{L^{\infty}(0,T;W^{2,\beta}(\Omega))} 
    & \leq C (\|\overline{\by}_M\|_{L^{\infty}(0,T;W^{2,\beta}(\Omega))}  +\|\underline{\by}_M\|_{L^{\infty}(0,T;W^{2,\beta}(\Omega))}) \leq C\,,\\
    \|\partial_t\hat{\by}_M\|_{L^2(0,T;H^1(\Omega))} 
    & \leq C\,.
\end{align*}
Hence, there exists $\by\in L^{\infty}(0,T;W^{2,\beta}(\Omega;\R^d))\cap H^1(0,T;H^1(\Omega;\R^d))$
such that~\eqref{equ:y_derivative} and
\begin{equation}
\label{eq:limityhat_proof}
\hat{\by}_M \xrightharpoonup{\ast} \byg\quad \text{in } L^{\infty}(0,T;W^{2,\beta}(\Omega;\R^d))
\end{equation}
hold
as $M\to\infty$  (for a not relabeled subsequence).
Using the Aubin-Lions lemma (e.g., see~\cite[Lemma 7.7]{roubivcek2013nonlinear}) and the compact embedding $W^{2,\beta}(\Omega)\hookrightarrow C^{1,\lambda}(\overline{\Omega})$
for $0<\lambda<1-\tfrac{d}{\beta}$,
we find~\eqref{equ:y_hut_stark}.
Furthermore, \eqref{equ:apri_y2GG} yields
elements $\overline\byg, \underline{\by}\in L^{\infty}(0,T;W^{2,\beta}(\Omega; \R^d))$ and
(not relabeled) subsequences such that
\begin{align}\label{equ:proof_limits_gleich}
    \overline{\by}_M\xrightharpoonup{*} \overline\byg,
    \qquad
    \underline{\by}_M\xrightharpoonup{*} \underline\byg\quad \text{in }L^{\infty}(0,T;W^{2,\beta}(\Omega; \R^d))\,.
\end{align}
To show that all three limit functions $\overline\byg$, $\underline\byg$ and $\byg$ coincide, 
let $t\in[0,T]$. 
Then there is $m\in\{1,\dots,M\}$ such that $t\in(t_M^{m-1},t^m_M]$. 
Then $\overline{\by}_M(t)=\overline{\by}_M(t^m_M)=\hat{\by}_M(t^m_M)$, and 
from~\eqref{equ:visco_bound} we obtain
\begin{equation}
\begin{aligned}
    \|\overline{\by}_M(t) - \hat{\by}_M(t)\|_{H^1(\Omega)} 
    & = \|\overline{\by}_M(t^m_M) - \hat{\by}_M(t)\|_{H^1(\Omega)}
    \leq \int_t^{t^m_M}\|\partial_t\hat{\by}_M(s)\|_{H^1(\Omega)}\,\mathrm{d}s \\
    & \leq C(t_M^m-t)^{\tfrac{1}{2}}\|\partial_t\hat{\by}_M\|_{L^2(0,T;H^1(\Omega))}
    \leq \tau_M^{\tfrac{1}{2}} C
\end{aligned}
\label{est:y.limit.same}
\end{equation}
for a.a.~$t\in(0,T)$,
so that $\|\overline{\by}_M - \hat{\by}_M\|_{L^\infty(0,T;H^1(\Omega))}\to 0$ as $M\to\infty$. 
Analogously, we find $\|\underline{\by}_M - \hat{\by}_M\|_{L^\infty(0,T;H^1(\Omega))}\to 0$.
Since we have already shown $\eqref{equ:y_hut_stark}$,
which implies the strong convergence $\hat\by_M\to\by$ in $L^2(0,T;H^1(\Omega))$,
the sequences $(\overline{\by}_M)_M$ and $(\underline{\by}_M)_M$ 
also converge strongly in this space towards the same limit $\by$.
Together with~\eqref{eq:limityhat_proof} and~\eqref{equ:proof_limits_gleich},
we thus conclude~\eqref{equ:y_weak}.
Moreover, for $r\in(2,\infty)$ we further deduce by interpolation that
\[
\begin{aligned}
\norm{\overline{\by}_M-\byg}_{L^\infty(0,T;W^{1,r}(\Omega))}
&\leq C \norm{\overline{\by}_M-\by}_{L^\infty(0,T;H^1(\Omega))}^{2/r}
\norm{\overline{\by}_M-\by}_{L^\infty(0,T;W^{1,\infty}(\Omega))}^{1-2/r}
\\
&\leq C\norm{\overline{\by}_M-\by}_{L^\infty(0,T;H^1(\Omega))}^{2/r} 
\norm{\overline{\by}_M-\by}_{L^\infty(0,T;W^{2,\beta}(\Omega))}^{1-2/r}
\to 0
\end{aligned}
\]
as $M\to\infty$
since the second factor is bounded due to~\eqref{equ:y_weak}.
As the same argument can be repeated for $(\underline{\by}_M)$ and $(\hat{\by}_M)$,
we obtain~\eqref{equ:y_stark}.
Additionally, \eqref{equ:apri_y2GG} yields the 
pointwise boundedness of $(\overline{\by}_M(t))_{M\in\N}$ in $W^{2,\beta}(\Omega)$ for all $t\in[0,T]$,
so that we obtain a ($t$-dependent) subsequence $(\overline{\by}_{M_k}(t))_{k\in\N}$
with $\overline{\by}_{M_k}(t)\rightharpoonup x_t$
in $W^{2,\beta}(\Omega)$ for some $x_t\in W^{2,\beta}(\Omega)$.
In particular, this weak convergence transfers to $H^1(\Omega)$,
and~\eqref{est:y.limit.same} implies $\hat\by_{M_k}(t)\rightharpoonup x_t$ in $H^1(\Omega)$.
From~\eqref{equ:y_hut_stark} we conclude $x_t=\by(t)$.
In particular, this limit does not depend on the chosen subsequence,
and we conclude $\overline{\by}_{M}(t)\rightharpoonup \by(t)$
as $M\to\infty$.
Arguing similarly for $(\underline{\by}_{M}(t))_{k\in\N}$,
we arrive at~\eqref{equ:y_weak_ptw}.

Next we address the interpolants for the phase field.
Similarly to before, 
we deduce the boundedness of $(\hat{\psi}_M)_{M\in\N}$ in $L^{\infty}(0,T;H^1(\Omega))$
from the corresponding boundedness 
of $(\overline{\psi}_M)_{M\in\N}$, $(\underline{\psi}_M)_{M\in\N}$
by~\eqref{equ:apri_psi2}.
By \eqref{equ:apri_psi2_hut} we know that $(\partial_t\hat{\psi}_M)_{M\in\N}$ is 
also bounded in $L^2(0,T;H^1(\Omega)^*)$,
so that there exists a (not relabeled) subsequence and $\psi\in L^\infty(0,T;H^1(\Omega))\cap L^2(0,T;H^1(\Omega)^*)$ 
with
$\Hat{\psi}_M\xrightharpoonup{*}\psig$ in $L^{\infty}(0,T;H^1(\Omega))$
and \eqref{equ:conv_hat_psi}.
From the Aubin-Lions lemma 
we now conclude
\begin{align}\label{l2l2_hut}
    \Hat{\psi}_M \to \psig\quad\text{in }C(0,T;L^2(\Omega))\quad\text{as }M\to\infty\,.
\end{align}
From \eqref{equ:apri_psi2} we further obtain elements $\overline\psig,\underline\psig
\in L^{\infty}(0,T;H^1(\Omega))$ 
and a subsequence 
such that there exist 
$\overline\psig,\underline\psig
\in L^{\infty}(0,T;H^1(\Omega))$ 
with
\begin{equation}\label{same-limits}
\overline{\psi}_M\xrightharpoonup{*}\overline\psig\,,\,\,\underline{\psi}_M\xrightharpoonup{*}\underline\psig\quad\text{in }L^{\infty}(0,T;H^1(\Omega))\,.
\end{equation}
To show that the limits $\psi$, $\overline\psi$ and $\underline\psi$ coincide,
we argue as above to first conclude
\[
    \|\overline{\psi}_M(t) - \Hat{\psi}_M(t)\|_{H^1(\Omega)^*}
    \leq C(t^m_M-t)^{\frac{1}{2}} \|\partial_t\Hat{\psi}_M  \|_{L^2(0,T;H^1(\Omega)^*)}
   \leq \tau_M^{\frac{1}{2}}C \,
\]
for $t\in(t^{m-1}_M,t^m_M]$, where we used~\eqref{equ:apri_psi2_hut}.
This implies 
$\|\overline{\psi}_M - \Hat{\psi}_M\|_{L^\infty(0,T;H^1(\Omega)^*)}
\to 0$, 
and we obtain $\|\underline{\psi}_M - \hat{\psi}_M\|_{L^\infty(0,T;H^1(\Omega)^*)}\to 0$ 
 as $M\to\infty$ in the same way.
Now,~\eqref{l2l2_hut} yields~\eqref{equ:conv_psi_strong},
so that \eqref{same-limits} implies $\psi=\overline{\psi}=\underline{\psi}$
and thus~\eqref{equ:conv_psi_linf}.
To conclude the pointwise convergence stated in \eqref{equ:cong_psi_l2},
we observe that
\[
\sum_{m=1}^M\|\psi_m^M-\psi_{m-1}^M\|_{H^1(\Omega)^\ast}
\leq C\sum_{m=1}^M \tau_M\|\frac{\psi_m^M-\psi_{m-1}^M}{\tau_M}\|_{\Tilde{V}_0}
= C\int_0^T\|\nabla\mu_M(s)\|_{L^2(\Omega)}\ds
\leq C\,,
\]
by estimate~\eqref{equ:apri_eta}.
This shows that $\hat{\psi}_M$, $\overline{\psi}_M$ and $\underline{\psi}_M$ have uniformly bounded total variation
as $H^1(\Omega)^\ast$-valued functions.
Hence, by a vector-valued version of Helly's selection principle,
see~\cite[Thm. B.5.10]{mielke2015rate} for instance, we 
obtain~\eqref{equ:cong_psi_l2}.
The convergence of $(\mu_M)_M$ stated in~\eqref{equ:mu} follows directly from estimate~\eqref{equ:apri_eta}
after selection of a suitable subsequence.

Next we show~\eqref{o-special-1}.
We set 
$\overline{\bF}_{{\mathrm e},M}=\overline{\bF}_M\Fp^{-1}(\overline{\psi}_M)$ for $\overline{\bF}_M=\nabla_{\by}\bvD(t,\overline{\by}_M)\nabla\overline{\by}_M$.
By the convexity of $\Why$, we have
\begin{equation}\label{estimate_beta}
    \int_{\Omega_T} \big(\Why(\nabla\overline{\bF}_{M})-\Why(\nabla\bFg)\big) \dxdt
    \leq\int_{\Omega_T}\partial_{\bG} \Why(\nabla\overline{\bF}_{M})\tdots(\nabla\overline{\bF}_{M}-\nabla\bFg)\dxdt\,.
\end{equation}
Due to~\eqref{equ:y_hut_stark},
we have the uniform convergence $\hat\by_M\to\byg$ and $\nabla\hat\by_M\to\nabla\by$ in $[0,T]\times\overline\Omega$,
which yields the uniform convergence $\overline\by_k\to\byg$ and $\nabla\overline\by_k\to\nabla\byg$ in $[0,T]\times\overline\Omega$.
By continuity of $\nabla \bvD$, this shows that $\overline{\bF}_M\to \bFg$ uniformly in $[0,T]\times\overline\Omega$ as $M\to\infty$.
Using $\bw=\overline{\bv}_{M_k}(t)-\bvg(t)\in W^{2,\beta}_{\DBC}(\Omega;\R^d)$ 
as a test function in the time-discrete Euler--Lagrange equation~\eqref{equ:weak_elast}
and integrating in time, we can replace the right-hand side of~\eqref{estimate_beta} 
to conclude
\begin{align*}
&\limsup_{M\to\infty} \int_{\Omega_T} \big(\Why(\nabla\overline{\bF}_{M})-\Why(\nabla\bFg)\big) \dxdt
\\
&\qquad
\leq - \lim_{M\to\infty}\int_{\Omega_T}
\big(\partial_{\bF}\Wel(\overline{\bF}_{{\mathrm e},{M}})+ \partial_{\bs{\dot{F}}}V(\overline{\bF}_M,\partial_t{\hat{\bF}}_M)\big):\big(\overline{\bF}_{M}-\bFg\big) \dxds  =0 \,.
\end{align*}
Here we use 
growth conditions for $\Wel$ and $\partial_{\bs{\dot{F}}}V$
due to~\eqref{equ:prop-R_1} and~\ref{Ass5}
and the boundedness of $\partial_t\hat{\bF}_M$
in $L^2(0,T;L^2(\Omega;\R^{d\times d}))$.
From the weak convergence $\nabla\overline{\bF}_{M_k}\rightharpoonup\nabla\bFg$ in $L^{\beta}((0,T)\times\Omega;\R^{\dxd\times d})$,
which is due to~\eqref{equ:y_weak} and Lemma~\ref{lemma:estvy},
and the convexity of $\Why$,
we further conclude 
\[
\liminf_{M\to\infty}\int_{\Omega_T} \big(\Why(\nabla\overline{\bF}_{M})-\Why(\nabla\bFg)\big) \dxdt \geq 0.
\]
In summary, this yields the convergence 
$\lim_{M\to\infty}\int_{\Omega_T} \Why(\nabla\overline{\bF}_{M})=\int_{\Omega_T} \Why(\nabla\bF)$.
Together with the weak convergence
$\nabla\overline{\bF}_{M_k}\rightharpoonup\nabla\bFg$ in $L^{\beta}((0,T)\times\Omega;\R^{\dxd\times d})$,
the strict convexity of $\Why$ and the growth condition~\eqref{equ:prop-R_2},
this implies the strong convergence~\eqref{o-special-1},
see \cite[Theorem X.2.4]{visintin1996phasetransitions} for instance.

To show \eqref{o-special-2}, we test \eqref{equ:weak_chem} with $\zeta=\overline{\psi}_M(t)$, integrate over time and pass to the limit such that we obtain 
\begin{align*}
    \int_{\Omega_T}|\overline{\bF}_M^{-T}\nabla\overline{\psi}_M|^2\dxdt         
    & \to -\int_{\Omega_T} \big(\partial_{\psi}\partial_{\psi}W(\bF,\nabla\bF,\psig,\nabla\psig)\,\psig+\mug \psig\big)\dxdt\,,
\end{align*}
as $M\to\infty$. 
Here we use that $\partial_\psi\Wpf(\overline{\psi}_M,\nabla\overline{\psi}_M,\overline{\bF}_M)$ is uniformly bounded in $L^2$ due to Sobolev embeddings. Further, due to $\partial_{\psi}\Wel(\overline{\bF}_{\mathrm e,M})=\partial_{\bF}\Wel(\overline{\bF}_{\mathrm e,M})\bF^T:\partial_{\psi}(\Fp^{-1}(\psi))$, the growth condition \eqref{equ:stress_contral_KH2} and the assumptions on $g$,
we have a uniform bound and can pass to the limit $M\to\infty$.
Similarly, testing \eqref{equ:weak_chem} with $\zeta=\psig(t)$, integrating over time
and passing to the limit by using the uniform convergence 
$\overline{\bF}_M\to\overline{\bF}$ yields
\begin{align*} 
    \int_{\Omega_T}\!|\overline{\bF}^{-T}\nabla\psig|^2\dxdt         
    & = -\int_{\Omega_T} \big(\partial_{\psi}W(\bF,\nabla\bF,\psig,\nabla\psig)\,\psig+\mug\psig\big)\dxdt
    \\
    &=\lim_{M\to\infty}\int_{\Omega_T}\!|\nabla\overline{\psi}_M|^2\dxdt   
\end{align*}
by the previous identity.
Together with the weak$^*$ convergence from~\eqref{equ:conv_psi_linf}, this gives the strong convergence 
$\overline{\bF}_M^{-T}\overline{\psi}_M\to\overline{\bF}^{-T}\psig$ 
and thus $\overline{\psi}_M\to\psig$ in $L^2(0,T;H^1(\Omega))$ as $M\to\infty$.
In particular, this also implies the strong convergence $\overline{\psi}_M\to\psig$
in $L^r(0,T;H^1(\Omega))$ for $r\in[1,2)$.
For $r\in(2,\infty)$, interpolation yields
\[
\|\overline{\psi}_M-\psig\|_{L^r(0,T;H^1(\Omega))}
\leq \|\overline{\psi}_M-\psig\|_{L^2(0,T;H^1(\Omega))}^{2/r}
\|\overline{\psi}_M-\psig\|_{L^\infty(0,T;H^1(\Omega))}^{(r-2)/r}
\to 0
\]
as $M\to\infty$
since the second factor is bounded due to~\eqref{equ:conv_psi_linf}.
In a similar way, we further conclude $\underline{\psi}_M\to\psig$ in $L^r([0,1];H^1(\Omega))$ as $M\to\infty$.
In total, this yields \eqref{o-special-2}.

Finally, \eqref{eq:conv-energy} directly follows from the convergence properties in
\eqref{equ:conv_weak} and \eqref{eq:conv-special}.
\end{proof}
 
Based on the solutions of the time-discrete Euler--Lagrange equations, we now perform the passage to vanishing time step size $\tau_M\to0$, or equivalently $M\to\infty$,
to obtain a weak solution to~\eqref{equ:pde}.
 
\begin{proof}[Proof of \Cref{Thm:main_result_fast}]  
The strategy of the proof is to pass to the limit $M\to\infty$ after integrating the weak formulations from \Cref{lem:weak_interp} over the time interval $[0,T]$,
which is based on the the convergence result in Lemma~\ref{lemma:interpol_conv}. 
We will discuss each limit transition individually.

Since the interpolants occur in \eqref{equ:weak_time}
in a linear way,
after integration over the time interval $[0,T]$, 
we can directly pass to the limit $M\to \infty$,
using the convergence properties \eqref{equ:conv_hat_psi} and \eqref{equ:mu},
to obtain~\eqref{equ:time}.

Similarly, the left-hand side of \eqref{equ:chem} is immediately recovered as the limit $M\to\infty$ of the left-hand side of~\eqref{equ:weak_chem}, again after integration in time and using \eqref{equ:mu}.
The right-hand sides of \eqref{equ:weak_chem} and \eqref{equ:chem} are composed of three contributions
associated with the quantities $\partial_\psi\Wel$, $\partial_{\psi}\Wpf$
and $\partial_{\nabla\psi}\Wpf$.
In virtue of~\eqref{def-W_double},
one can pass to the limit in the latter two terms
using the strong convergence \eqref{o-special-2}
and Sobolev embeddings.
However, the limit passage in the term associated with $\partial_{\psi}\Wel$, which is given in~\eqref{eq:Wder.psi}, is more complicated.
First, by the compact embedding $W^{2,\beta}(\Omega;\R^d)\Subset C^{1}(\overline{\Omega};\R^d)$, and by the assumptions on $g$ in \eqref{equ:alles_g-OO},
we conclude that $(\overline{\bF}_{{\mathrm e},M})_{M\in\N}$ is bounded in $L^{\infty}(\Omega;\R^{d\times d})$. Moreover, \eqref{equ:detbound} yields a pointwise lower bound for $\det(\nabla\by_M(x))$ for $x\in\Omega$. 
Hence, the dominated convergence theorem leads to
\begin{align*}
    \int_{\Omega_T} {\partial_{\psi}\Wel\left(\overline{\bF}_{{\mathrm e},M}\right)}\zeta\dxdt\to\int_{\Omega_T}{\partial_{\psi}\Wel\left({\bFg}_{{\mathrm e}}\right)}\zeta\dxdt\,
\end{align*}
as $M\to\infty$.
Summing up the individual components of the right-hand side, we conclude the proof of \eqref{equ:chem}. 

Next we perform the limit passage from \eqref{equ:weak_elast} to \eqref{equ:elast}, after integration over $[0,T]$. 
By~\eqref{equ:y_stark} we know $\overline{\bF}_{M}\to\bFg$ 
in $L^r(0,T;L^r(\Omega;\R^{d\times d}))$,
and \Cref{lemma:g_conv_NEO} yields ${g}^{-1}_M\to g^{-1}$ in $L^{\overline{q}}(\Omega)$ for $\overline{q}\in[1,6)$.
Hence, we find an a.e.~pointwise convergent (not relabled) subsequence 
such that $\overline{\bF}_{{\mathrm e},M}\to\bFg_{\mathrm e}$,
and thus 
$\left(\partial_{\bF}\Wel(\overline{\bF}_{{\mathrm e},M})\right)_M$
is pointwise convergent a.e. 
Using that $g$ and $\frac{1}{g}$ are bounded functions and that $(\bF_M)$ is uniformly bounded by~\eqref{equ:y_weak}, 
we observe that $\left(\partial_{\bF}\Wel(\overline{\bF}_{{\mathrm e}_M})\right)_M$
is uniformly bounded, and by dominated convergence we conclude 
for any $\bw\in L^{\beta}(0,T;W_{\DBC}^{2,\beta}(\Omega;\R^d))$
that
\begin{align*}
    \int_{\Omega_T}\partial_{\bF}\Wel(\overline{\bF}_{{\mathrm e},M}):\nabla\bw \dxds
    \to\int_{\Omega_T}\partial_{\bF}\Wel\left(\bFg_{{\mathrm e}}\right):\nabla\bw\dxds\quad\text{as }M\to\infty\,.
\end{align*}
Further we use the quadratic structure of the viscous potential $\hat{V}$, see~\ref{Ass5}, and the uniform boundedness of 
$(\overline{\bF}_M)_M\subset L^{\infty}(0,T;L^{\infty}(\Omega;\R^{\dxd}))$
due to~\eqref{equ:y_weak},
to obtain
\[
\begin{aligned}
\big\lvert\partial_{\dot{\bF}}V(\underline{\bF}_M,\partial_t{\hat{\bF}}):\nabla\bw\big\rvert 
&= \big\lvert (\partial_t{\hat{\bF}}^T\underline{\bF}_M+\underline{\bF}_M^T\partial_t{\hat{\bF}}):\bD(\underline{\bF}_M^T\underline{\bF}_M)(\nabla\bw^T\underline{\bF}_M+\underline{\bF}_M^T\nabla\bw)\big\rvert
\\
&\leq C |\partial_t{\hat{\bF}}| \,|\nabla \bw|
\leq C \big(|\partial_t{\hat{\bF}}|^2 +|\nabla \bw|^2\big).
\end{aligned}
\]
As $(\partial_t\hat\bF_M)_M\subset L^2(0,T;L^2(\Omega;\R^{\dxd}))$ 
is bounded due to~\eqref{equ:y_derivative}, 
for test functions $\bw\in L^2(0,T;W_{\Gamma_0}^{2,\beta}(\Omega;\R^d))$ it thus follows
\begin{align*}
\int_{\Omega_T}\partial_{\dot{\bF}}V(\underline{\bF}_M,\partial_t\hat{\bF}_M,\underline{\psi}_M):\nabla\bw \dxds
\to\int_{\Omega_T}\partial_{\dot{\bF}}V\left(\bF,\partial_t\bFg,\psi\right):\nabla\bw\dxds\,
\end{align*}
as $M\to\infty$ by Pratt's theorem and the continuity of $\bD$
assumed in~\ref{Ass5}.
The convergence of the hyperelasticity terms in \eqref{equ:weak_elast} follows 
by dominated convergence,
using the upper bound~\eqref{equ:prop-R_22} 
and the strong convergence of $(\nabla^2\bv_M)_M$ from~\eqref{o-special-1}.
In total, we thus obtain \eqref{equ:elast}.

Similarly, the energy-dissipation inequality~\eqref{equ:energy_O}
is derived from 
the discrete energy-dissipation inequality \eqref{equ:weak_energy} for the interpolants
by a limit procedure. 
To pass to the limit inferior on the left-hand side of~\eqref{equ:weak_energy}, 
we use the convergence properties in \eqref{equ:conv_weak}
and employ the lower semiconituity of $\fe$ (by \Cref{lemma:Coercive_LowerSemi}) and of the norm
in $L^2((0,t)\times\Omega)$.
For the viscous term associated to $V$,
we use the quadratic structure assumed in~\ref{Ass5} 
and the strong convergence of $(\underline{\bF}_M)$
induced by~\eqref{equ:y_stark}.
Concerning the right-hand side of~\eqref{equ:weak_energy},
we combine that $\by_M(t)\rightharpoonup\byg(t)$ in $\bfY$ and  $\psi_M(t)\rightharpoonup\psig(t)$ in $\bfZ$ 
by \eqref{equ:y_weak_ptw} and \eqref{equ:cong_psi_l2}
with the convergence of the corresponding free energy
stated in~\eqref{eq:conv-energy}
to conclude
\[
    \lim_{M\to\infty}
    \partial_t\fe (t,\underline{\by}_M(t),\underline{\psi}_M(t))
    =\partial_t\fe (t,\byg(t),\psig(t))
\]
from Lemma~\ref{lem:Fg.conv}.
As $\partial_t\fe (t,\underline{\by}_M(t),\underline{\psi}_M(t))$
is uniformly bounded due to the uniform continuity
stated in Lemma~\ref{OO_lemma:derivative_estimate},
we can pass to the limit on the right-hand side of~\eqref{equ:weak_energy}.
In total, this leads to the energy-dissipation balance~\eqref{equ:energy_O}
and completes the proof. 
\end{proof}

\section*{Acknowledgements}
The second author was partially supported by 
the German Research Foundation (DFG) within the DFG Priority
Program SPP 2171 \textit{Dynamic Wetting of Flexible, Adaptive, and Switchable Substrates} 
through the project no.~422786086.
The authors thank for helpful discussion
with Willem van Oosterhout about analytic aspects
and with
Dirk Peschka
about the modeling.
                                                     


\end{document}